\documentclass[a4paper]{amsart}
\usepackage{amssymb, amsthm, enumerate}
\usepackage{lscape}
\usepackage{hyperref,caption}
\usepackage{tikz}
\usetikzlibrary{patterns, decorations.markings}
\usepackage{pgfplots}
\pgfplotsset{compat=newest}
\usepgfplotslibrary{fillbetween}
\definecolor{verdxarxadiscriminant}{RGB}{87,146,0}
\definecolor{DarkGreen}{RGB}{32,127,43}
\definecolor{ColorMapViolet}{RGB}{178,178,211}
\newcounter{SubfloatCaptionCounter}[figure]
\graphicspath{{./TheFigures/}}
\makeatletter
\newcommand{\subfigdef}[3][0.49]{\stepcounter{SubfloatCaptionCounter}%
   \begin{minipage}[t]{#1\textwidth}%
      \includegraphics[width=\textwidth]{#2.png}\\%
      \hspace*{0.105\textwidth}\begin{minipage}{0.863\textwidth}\tiny%
          \@tempdimc\textwidth\advance\@tempdimc by -0.75em%
          \setbox\z@\hbox{(\Alph{SubfloatCaptionCounter})}\advance\@tempdimc by -\wd\z@%
          \usebox\z@\hspace*{0.75em}\parbox{\@tempdimc}{#3}
      \end{minipage}
   \end{minipage}
}
\makeatother
%
\newtheorem{theorem}{Theorem} 
\newtheorem{proposition}[theorem]{Proposition}
\newtheorem{lemma}[theorem]{Lemma}

\theoremstyle{definition}
\newtheorem{remark}[theorem]{Remark}
\providecommand{\abs}[1]{\ensuremath{\left\lvert#1\right\rvert}}
\makeatletter
\newenvironment{labeledlist}[1]{\begin{list}{}{\def\makelabel##1{\hspace*{1em}\bfseries##1:\hfill}%
         \setlength\labelsep{1em}\rightmargin\z@\itemindent\z@\leftmargin\labelsep%
         \setbox\z@\hbox{\makelabel{#1}}\labelwidth\wd\z@\advance\leftmargin by \labelwidth%
         \itemsep=2pt\parsep=0pt\topsep=3pt plus 1pt minus 1 pt}}{\end{list}}
\makeatother
\newcommand{\N}{\ensuremath{\mathbb{N}}}
\newcommand{\R}{\ensuremath{\mathbb{R}}}
\newcommand{\RD}{\mathcal{E}}
\newcommand{\SD}{\mathcal{S}}
\newcommand{\Q}{\mathsf{Q}}
\newcommand{\U}{\ensuremath{\mathbb{U}}}
\title[Chaos in a time-discrete food-chain model]{Dynamics in a
time-discrete food-chain model with strong pressure on preys}
\author{Ll. Alsed\`a$^{1,4,2}$}
\author{J. T. L\'azaro$^{3,2}$}
\author{R. Sol\'e$^{5,6,7}$}
\author{B. Vidiella$^{5,6}$}
\author{J. Sardany\'es$^{4,2}$}

\address{$\phantom{a}^{1}$
  Departament de Matem\`atiques,
  Edifici Cc,
  Facultat de Ci\`encies,
  Universitat Aut\`onoma de Barcelona
  08193 Bellaterra (Barcelona),
  Spain
}
\address{$\phantom{a}^{2}$
  Barcelona Graduate School of Mathematics (BGSMath),
  Campus de Bellaterra,
  Edifici Cc,
  Facultat de Ci\`encies,
  Universitat Aut\`onoma de Barcelona
  08193 Bellaterra (Barcelona),
  Spain
}
\address{$\phantom{a}^{3}$
  Departament de Matem\`atiques,
  Universitat Polit\`ecnica de Catalunya,
  Av. Diagonal, 647,
  08028 Barcelona,
  Spain
}
\address{$\phantom{a}^{4}$
  Centre de Recerca Matem\`atica,
  Campus de Bellaterra,
  Edifici Cc,
  Facultat de Ci\`encies,
  Universitat Aut\`onoma de Barcelona
  08193 Bellaterra (Barcelona),
  Spain
}
\address{$\phantom{a}^{5}$
  ICREA-Complex Systems  Lab,
  Universitat Pompeu Fabra,
  Dr Aiguader 88,
  08003 Barcelona,
  Spain
}
\address{$\phantom{a}^{6}$
  Institut de Biologia Evolutiva,
  CSIC-UPF,
  Pg Maritim de la Barceloneta 37,
  08003 Barcelona,
  Spain
}
\address{$\phantom{a}^{7}$
  Santa Fe Institute,
  1399 Hyde Park Road,
  Santa Fe NM 87501,
  USA
}
\begin{document}
\begin{abstract}
Ecological systems are complex dynamical systems. Modelling efforts
on ecosystems' dynamical stability have revealed that population
dynamics, being highly nonlinear, can be governed by complex
fluctuations. Indeed, experimental and field research has provided
mounting evidence of chaos in species' abundances, especially for
discrete-time systems. Discrete-time dynamics, mainly arising in
boreal and temperate ecosystems for species with non-overlapping
generations, have been largely studied to understand the dynamical
outcomes due to changes in relevant ecological parameters.
The local and global dynamical behaviour of many of these models
is difficult to investigate analytically in the parameter space and,
typically, numerical approaches are employed when the dimension of the
phase space is large.
In this article we provide topological and dynamical results for
a map modelling a discrete-time, three-species food chain with two
predator species interacting on the same prey population.
The domain where dynamics live is characterized, as well as the
so-called escaping regions, for which the species go rapidly to
extinction after surpassing the carrying capacity.
We also provide a full description of the local stability of
equilibria within a volume of the parameter space given by the
prey's growth rate and the predation rates.
We have found that the increase of the pressure of predators
on the prey results in chaos. The entry into chaos is achieved via a
supercritical Neimarck-Sacker bifurcation followed by period-doubling
bifurcations of invariant curves. Interestingly, an increasing
predation directly on preys can shift the extinction of top predators
to their survival, allowing an unstable persistence of the three
species by means of periodic and strange chaotic attractors.
\end{abstract}
\maketitle
\section{Introduction}
Ecological systems display complex dynamical patterns both in space
and time \cite{Sole2006}. Although early work already pointed towards
complex population fluctuations as an expected outcome of the
nonlinear nature of species' interactions
\cite{Elton1924,EltonNicholson1924}, the first evidences of chaos in
species dynamics was not characterized until the late 1980's and
1990's \cite{Constantino1997,Dennis1997}. Since pioneering works on
one-dimensional discrete
models~\cite{May1974,May1976,MayOster1976,Schaffer1986}
and on time-continuous ecological models e.g., with the so-called
spiral chaos \cite{Gilpin1979,Hastings1991}
(already pointed out by R\"ossler in 1976 \cite{Rossler1976}),
the field of ecological chaos experienced a strong debate and a
rapid development
\cite{May1974,May1976,Hastings1991,Schaffer1985,Berryman1989,Allen1993},
with several key papers offering a compelling evidence of chaotic
dynamics in Nature, from vertebrate populations
\cite{Schaffer1984,Schaffer1985,Turchin1993,Turchin1995,Turchin2000,Gamarra2000}
to plankton dynamics \cite{Beninca2008} and insect species
\cite{Constantino1997,Desharnais2001,Dennis1997,Dennis2001}.

Discrete-time models have played a key role in the understanding of
complex ecosystems, especially for those organisms undergoing one
generation per year i.e., univoltine species
\cite{May1974,May1976,Schaffer1986}.
The reason for that is the yearly forcing, which effectively makes
the population emerging one year to be a discrete function of the
population of the previous year \cite{Dennis2001}.
These dynamics apply for different organisms such as insects in
temperate and boreal climates.
For instance, the speckled wood butterfly (\emph{Pararge aegeria}) is
univoltine in its most northern range.
Adult butterflies emerge in late spring, mate, and die shortly after
laying the eggs.
Then, their offspring grow until pupation, entering diapause before winter.
New adults emerge the following year thus resulting in a single
generation of butterflies per year \cite{Aalberg2012}. Hence,
discrete maps can properly represent the structure of species
interactions and some studies have successfully provided experimental
evidence for the proposed dynamics
\cite{Constantino1997,Desharnais2001,Dennis1997,Dennis2001}.

Further theoretical studies incorporating spatial dynamics strongly
expanded the reach of chaotic behaviour as an expected outcome of
discrete population dynamics \cite{Hassell1991,Sole1992}.
Similarly, models incorporating
evolutionary dynamics and mutational exploration of genotypes easily
lead to strange attractors in continuous \cite{Sardanyes2007} and
discrete \cite{Sardanyes2011} time. The so-called \emph{homeochaos}
has been identified in discrete multi-species models with
victim-exploiter dynamics \cite{Ikegami1992,Kaneko1992}.

The dynamical richness of discrete ecological models was early
recognised \cite{May1974,May1976, MayOster1976,McCallum1992} and
special attention has been paid to food chains incorporating three
species in discrete systems \cite{Elalim2012,Azmy2008,zhang2009}.
However, few studies have analysed the full richness of the
parameter space analytically, where a diverse range of qualitative
dynamical regimes exist. In this paper we address this problem by
using a simple trophic model of three species interactions that
generalises a previous two-dimensional predator-prey model, given by
the difference Equations~(4.5) in \cite{Holden1986} (see also
\cite{Chapter_chaos}). The two-dimensional model assumes a food chain
structure with an upper limit to the total population of preys, whose
growth rate is affected by a single predator. The new
three-dimensional model explored in this article introduces a new top
predator species that consumes the predator and interferes in the
growth of the preys.

We provide a full description of the local dynamics and the
bifurcations in a wide region of the three-dimensional parameter
space containing relevant ecological dynamics. This parameter cuboid
is built using the prey's growth rates and the two predation rates as
axes. The first predation rate concerns to the predator that consumes
the preys, while the second predator rate is the consumption of the
first predator species by the top predator. As we will show, this
model displays remarkable examples of strange chaotic attractors. The
route to chaos associated to increasing predation strengths are shown
to be given by period-doubling bifurcations of invariant curves,
which arise via a supercritical Neimark-Sacker bifurcation.

\section{Three species predator-prey map}\label{se:themodel}
Discrete-time dynamical systems are appropriate for describing the
population dynamics of species with non-overlapping generations
\cite{Constantino1997,May1974,May1976,Kon2001,Aalberg2012}.
Such species\\[1pt]
\begin{minipage}{0.7\textwidth}
are found in temperate and boreal regions because of their seasonal
environments.
We here consider a food chain of three interacting species,
each with non-overlapping generations,
which undergoes intra-specific competition.
We specifically consider a population of preys $x$ which is predated
by a first predator with population $y.$
We also consider a third species given by a top predator $z$ that
predates on the first predator $y$, also interfering in the growth of prey's
population according to the side diagram.

Examples of top-predator$\to$predator$\to$prey interactions in
\end{minipage}\hfill\begin{minipage}{0.25\textwidth}
\includegraphics[width=\textwidth]{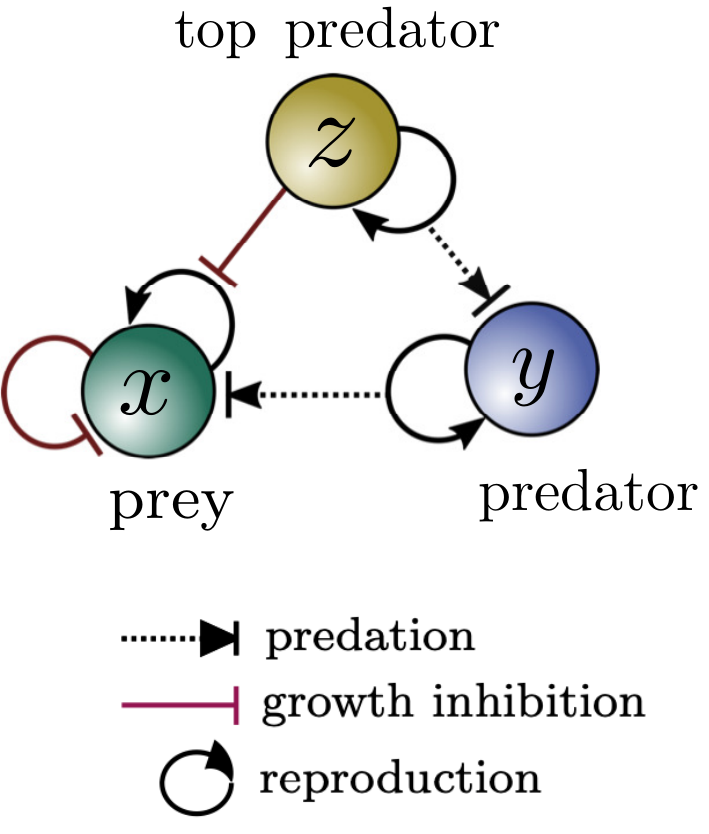}
\end{minipage}\vspace*{0.4ex}
univoltine populations can be found in ecosystems.
For instance, the heteroptera species \emph{Picromerus bidens}
in northern Scandinavia \cite{Saulich2014},
which predates the butterfly \emph{Pararge aegeria} by consuming on its eggs.
Also, other species such as spiders can act as top-predators
(e.g., genus \emph{Clubiona sp.}, with a wide distribution in northern Europe and Greenland).
The proposed model to study such ecological interactions can be described by
the following system of nonlinear difference equations:
\begin{equation}\label{eq:sistema}
\begin{pmatrix}
  x_{n+1} \\ y_{n+1} \\ z_{n+1}
\end{pmatrix} = T \begin{pmatrix}
  x_{n} \\ y_{n} \\ z_{n}
\end{pmatrix}
\phantom{x}  \text{where} \phantom{x}
T \begin{pmatrix} x \\ y \\ z \end{pmatrix} = \begin{pmatrix}
  \mu x ( 1 - x - y - z) \\
  \beta y (x - z) \\
  \gamma y z
\end{pmatrix}
\end{equation}
and $x,y,z$ denote population densities with respect to a normalized
carrying capacity for preys ($K = 1$).
Observe that, in fact, if we do not normalize the carrying capacity
the term $1 - x - y$ in $T_{\mu,\beta}$ should read
$\tfrac{1 - x}{K - y}.$
Constants $\mu,\beta,\gamma$ are positive.
In the absence of predation, as mentioned, preys grow logistically
with an intrinsic reproduction rate $\mu.$
However, preys' reproduction is decreased by the action of predation
from both predators $y$ and $z.$
Parameter $\beta$ is the growth rate of predators $y$, which is
proportional to the consumption of preys.
Finally, $\gamma$ is the growth rate of predators $z$ due to the
consumption of species $y.$
Notice that predator $z$ also predates (interferes) on $x$, but it is
assumed that the increase in reproduction of the top predator $z$
is mainly given by the consumption of species $y.$

Model~\eqref{eq:sistema} is defined on the phase space,
given by the simplex
\[
   \U := \bigl\{(x,y,z)\in \R^3\, \colon x,y,z \geq 0 \text{ and }
                x+y+z \leq 1\bigr\}
\]
and, although it is meaningful for the parameters' set
\[
   \{(\mu,\beta,\gamma)\in \R^3 \, \colon \mu > 0,\ \beta > 0
   \text{ and } \gamma > 0\}
\]
of all positive parameters, we will restrict ourselves to the
following particular cuboid
\begin{equation}\label{def:cuboid:direct}
  \Q = \left\{ (\mu,\beta,\gamma) \in (0,4] \times [2.5,5] \times [5,9.4]\right\}
\end{equation}
which exhibits relevant biological dynamics
(in particular bifurcations and routes to chaos).

The next proposition lists some very simple dynamical facts about
System~\eqref{eq:sistema} on the domain $\U$ with parameters in the
cuboid $\Q.$
It is a first approximation to the understanding of the dynamics of
this system.

A set $A\subset \U$ is called \emph{$T$-invariant} whenever
$T(A) \subset A.$

\begin{proposition}\label{prop:simpledynamics}
The following statements hold for System~\eqref{eq:sistema} and all
parameters $(\mu,\beta,\gamma) \in \Q.$
\begin{enumerate}[(a)]
\item The point $(0,0,0) \in \U$ is a fixed point of $T$ which
corresponds to extinction of the three species.
\item $
   T\bigl(\{(1,0,0)\}\bigr) = T\bigl(\{(0,y,0)\in\U\}\bigr) =
   T\bigl(\{(0,0,z)\in\U\}\bigr) = (0,0,0).
$
That is, the point $(1,0,0)$ and every initial condition in $\U$ on
the $y$ and $z$ axes lead to extinction in one iterate.
\item $
   T\bigl(\{(x,0,z)\in\U\}\bigr) \subset \{(x,0,0)\in\U\} \subset
   \{(x,0,z)\in\U\}.
$
In particular the sets $\{(x,0,z)\in\U\}$ and $\{(x,0,0)\in\U\}$ are
$T$-invariant.
\end{enumerate}
\end{proposition}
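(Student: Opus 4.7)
The proposition consists of three essentially computational statements about the map $T$, and my plan is to dispatch them in order, keeping an eye on the single non-trivial point, namely that the image of the set in~(c) actually lies inside $\U$ (which is what pins down the upper bound $\mu\leq4$ coming from the cuboid $\Q$).

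For~(a), I would simply evaluate each coordinate of $T$ at $(0,0,0)$: every term carries a factor of $x$, $y$, or $yz$, so $T(0,0,0)=(0,0,0)$ and this point is plainly in $\U$. For~(b), I would treat the three sets separately but identically: on $(1,0,0)$ the first coordinate of $T$ has the factor $1-x-y-z=0$, while the second and third coordinates vanish because $y=z=0$; on $(0,y,0)$ the first coordinate vanishes from the factor $x=0$, the second from the factor $x-z=0$, and the third from $z=0$; on $(0,0,z)$ the first and third vanish from $x=0$, and the second from $y=0$. Each of these computations is a one-line check and jointly establish~(b).

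For~(c), I would first compute
\[
  T(x,0,z)=\bigl(\mu x(1-x-z),\,0,\,0\bigr),
\]
which shows immediately that the $y$- and $z$-components of the image are zero. The remaining task is to verify that $\mu x(1-x-z)\in[0,1]$ so that $T(x,0,z)$ genuinely belongs to $\{(x',0,0)\in\U\}$. Non-negativity follows from $x\geq0$ and $1-x-z\geq0$, both of which are part of the defining inequalities of $\U$. For the upper bound I would maximize $x(1-x-z)$ over $\U\cap\{y=0\}$: the maximum is attained at $z=0$ and $x=1/2$ with value $1/4$, so $\mu x(1-x-z)\leq \mu/4 \leq 1$ because the cuboid $\Q$ enforces $\mu\leq 4$. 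Finally, the inclusion $\{(x,0,0)\in\U\}\subset\{(x,0,z)\in\U\}$ is tautological ($z=0$ is an allowed value), and the $T$-invariance assertions follow by combining the two inclusions.

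I do not expect a real obstacle: the only step where a genuine inequality is used is bounding $\mu x(1-x-z)$ above by $1$, and this is precisely where the constraint $\mu\leq 4$ built into $\Q$ enters. Once that elementary optimization is carried out, the rest of the argument is purely algebraic substitution.
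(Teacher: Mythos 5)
Your proof is correct and follows essentially the same route as the paper: parts (a) and (b) are direct substitutions, and part (c) reduces to checking $0\le\mu x(1-x-z)\le 1$, which both you and the authors obtain from $x(1-x-z)\le x(1-x)\le\tfrac14$ together with the constraint $\mu\le 4$ from $\Q$. No issues.
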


\begin{proof}
Statements (a) and (b) follow straightforwardly.
To prove (c) notice that
$T\bigl((x,0,z)\bigr) = (\mu x(1-x-z),0,0)$ with $\mu \in (0,4],$
$x \geq 0$ and $x+z \leq 1.$
Hence,
\[
   0 \leq \mu x(1-x-z) = \mu x(1-x) - \mu xz \leq 1 - \mu xz \leq 1,
\]
and thus $(\mu x(1-x-z),0,0) \in \U.$
\end{proof}

An important natural question is: what is the (maximal) subset $\SD$
of $\U$ where the Dynamical System associated to Model~\eqref{eq:sistema} is
well defined for all times or iterates
(i.e. $T^n\bigl((x,y,z)\bigr)\in\U$ for every $n \in \N$ and
$(x,y,z)\in\SD$).
Such a set is called the \emph{dynamical domain} or
the \emph{invariant set} of System~\eqref{eq:sistema}.
The domain $\SD$ is at the same time complicate and difficult to
characterize (see Figure~\ref{fig:TallConjuntInvariant}).
\begin{figure}[b]
\begin{center}
\hfill\subfigdef[0.235]{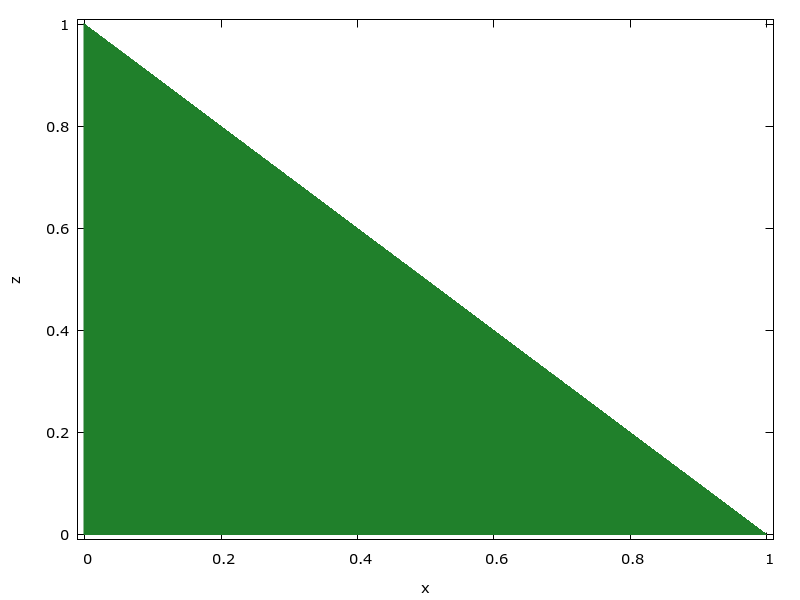}{$\mu = 0.7,\ \beta = 2.5$\newline and $\gamma = 5.0$:\newline Plane $y = 0.$}
\hfill\subfigdef[0.235]{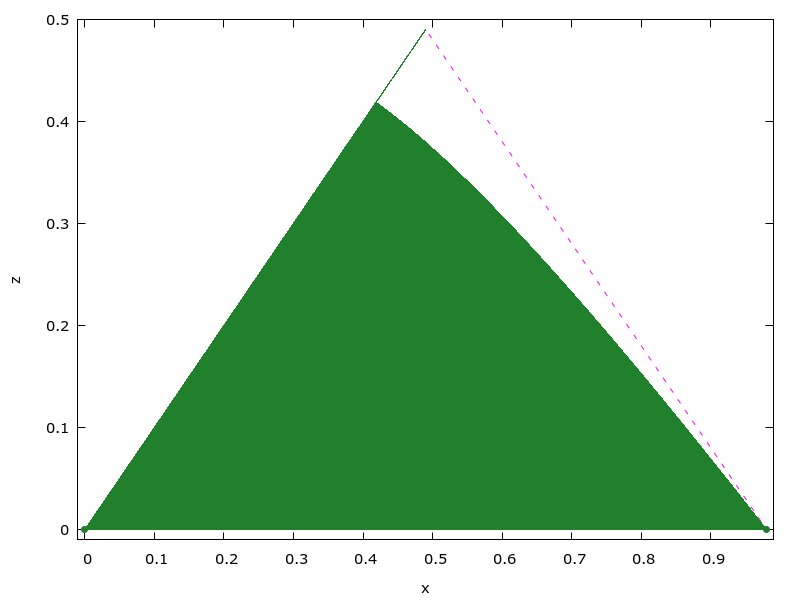}{$\mu = 0.7,\ \beta = 2.5$\newline and $\gamma = 5.0$:\newline Plane $y = 0.02.$}
\hfill\subfigdef[0.235]{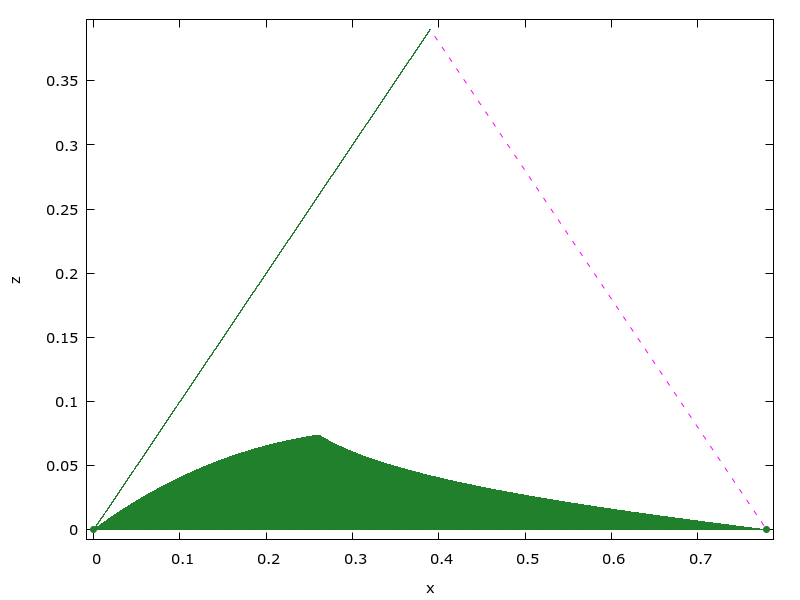}{$\mu = 0.7,\ \beta = 2.5$\newline and $\gamma = 5.0$:\newline Plane $y = 0.22.$}
\hfill\subfigdef[0.235]{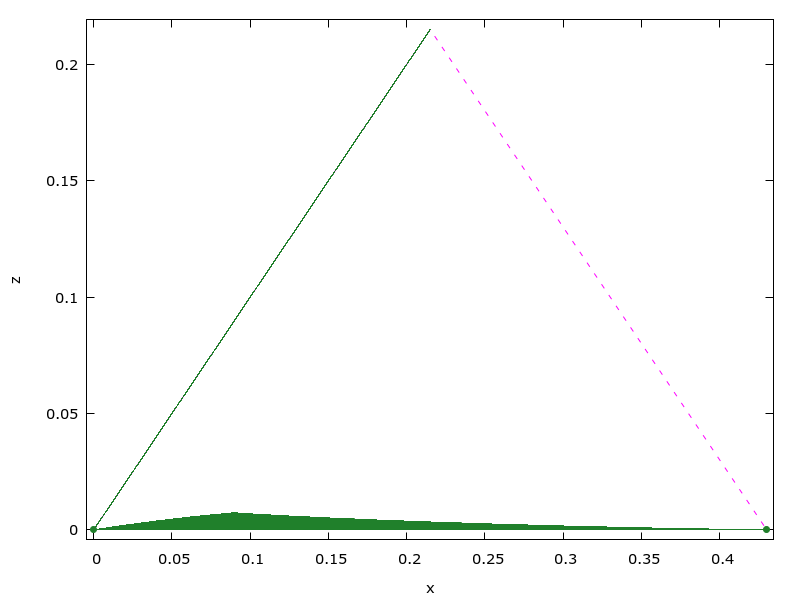}{$\mu = 0.7,\ \beta = 2.5$\newline and $\gamma = 5.0$:\newline Plane $y = 0.57.$}
\hfill \strut \\[-2ex]
\hfill\subfigdef[0.235]{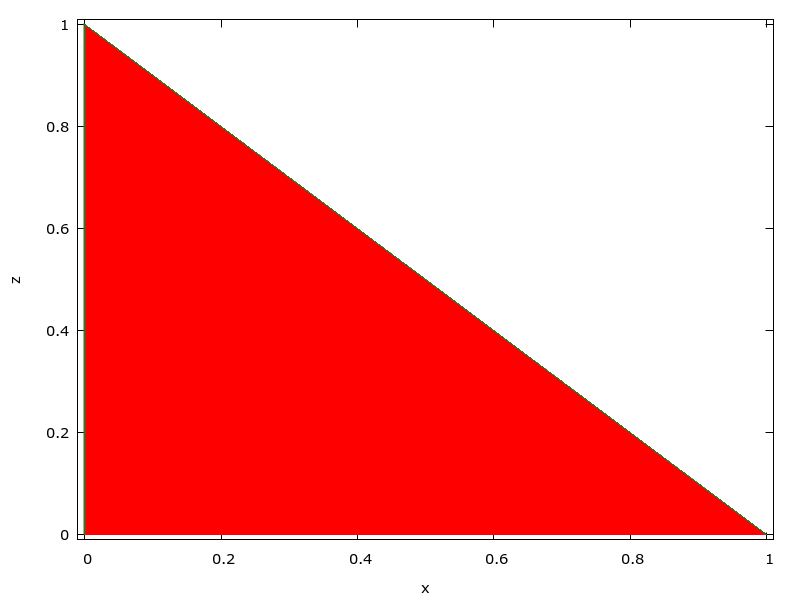}{$\mu = 1.261,$\newline$\beta = 2.925$\newline$\gamma = 5.748$:\newline Plane $y = 0.$}
\hfill\subfigdef[0.235]{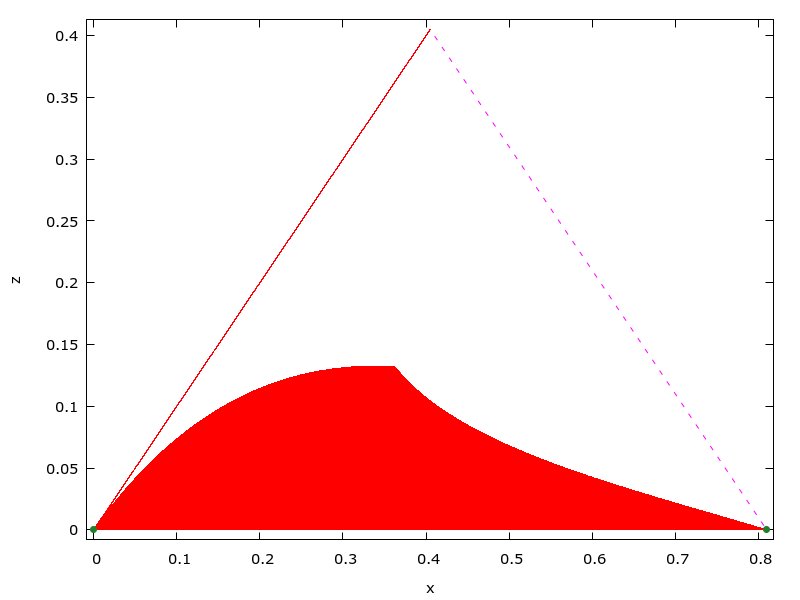}{$\mu = 1.261,$\newline$\beta = 2.925$\newline$\gamma = 5.748$:\newline Plane $y = 0.19.$}
\hfill\subfigdef[0.235]{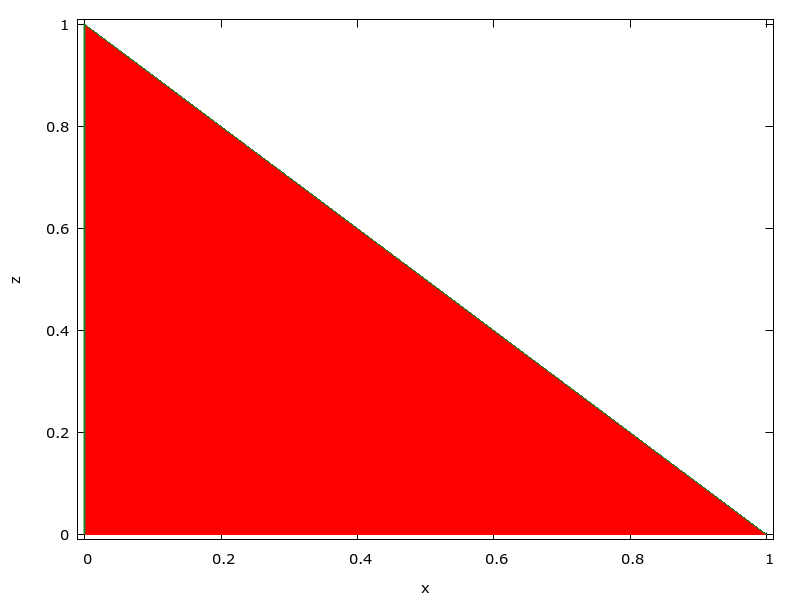}{$\mu = 1.657,$\newline$\beta = 3.225$\newline$\gamma = 6.276$:\newline Plane $y = 0.$}
\hfill\subfigdef[0.235]{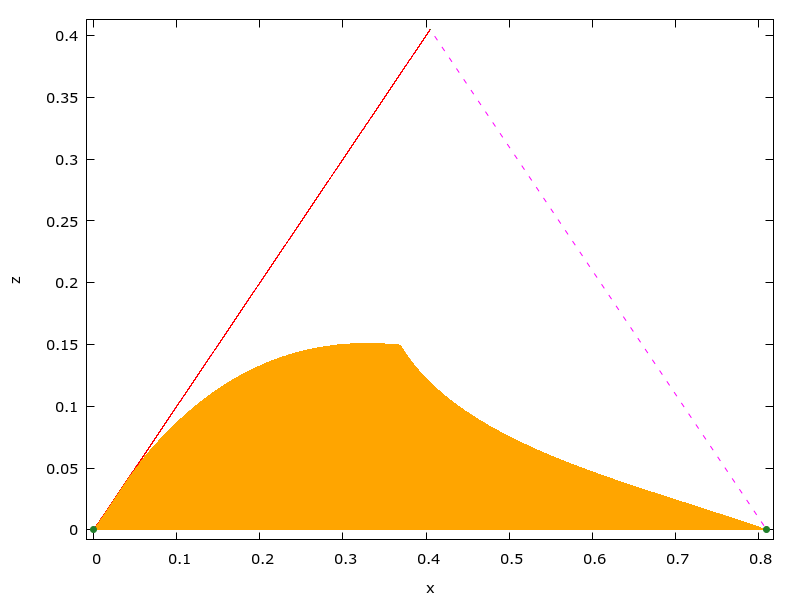}{$\mu = 1.657,$\newline$\beta = 3.225$\newline$\gamma = 6.276$:\newline Plane $y = 0.19.$}
\hfill \strut \\[-2ex]
\hfill\subfigdef[0.235]{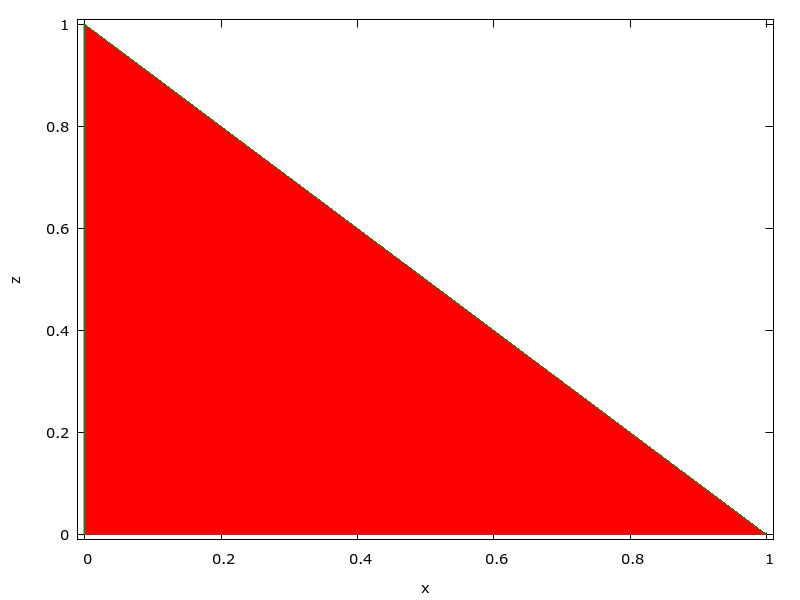}{$\mu = 1.822,$\newline$\beta = 3.350$\newline$\gamma = 6.496$:\newline Plane $y = 0.$}
\hfill\subfigdef[0.235]{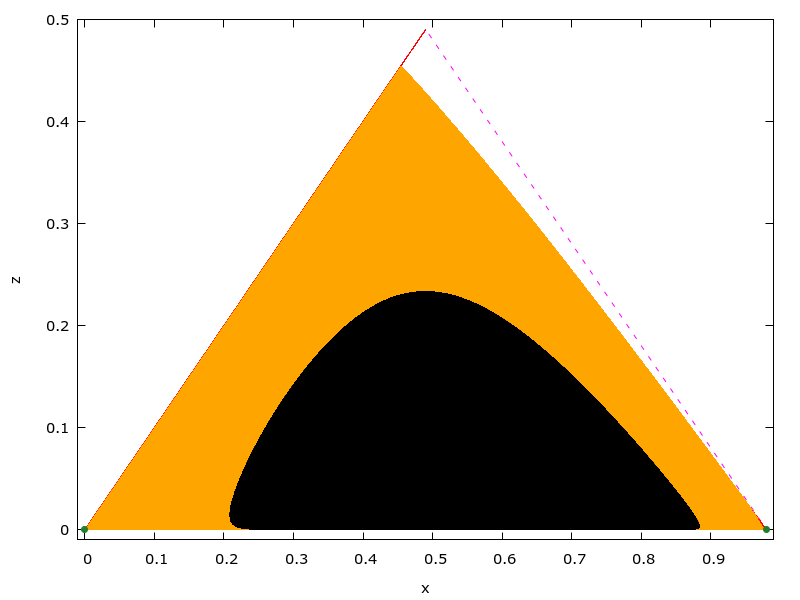}{$\mu = 1.822,$\newline$\beta = 3.350$\newline$\gamma = 6.496$:\newline Plane $y = 0.02.$}
\hfill\subfigdef[0.235]{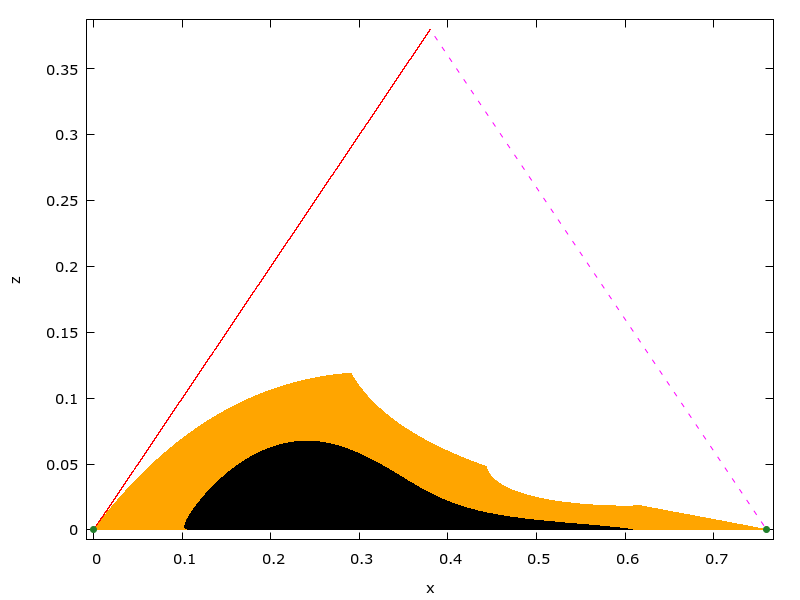}{$\mu = 1.822,$\newline$\beta = 3.350$\newline$\gamma = 6.496$:\newline Plane $y = 0.24.$}
\hfill\subfigdef[0.235]{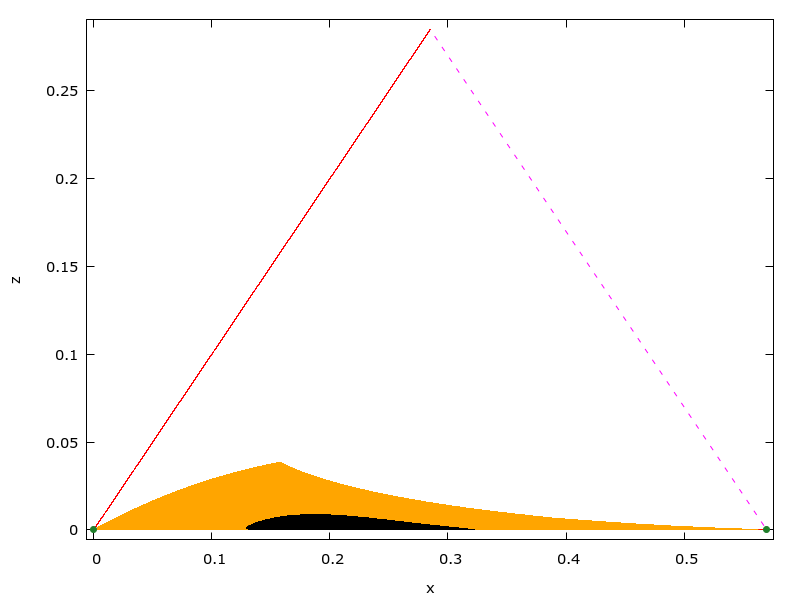}{$\mu = 1.822,$\newline$\beta = 3.350$\newline$\gamma = 6.496$:\newline Plane $y = 0.43.$}
\hfill \strut \\[-2ex]
\hfill\subfigdef[0.235]{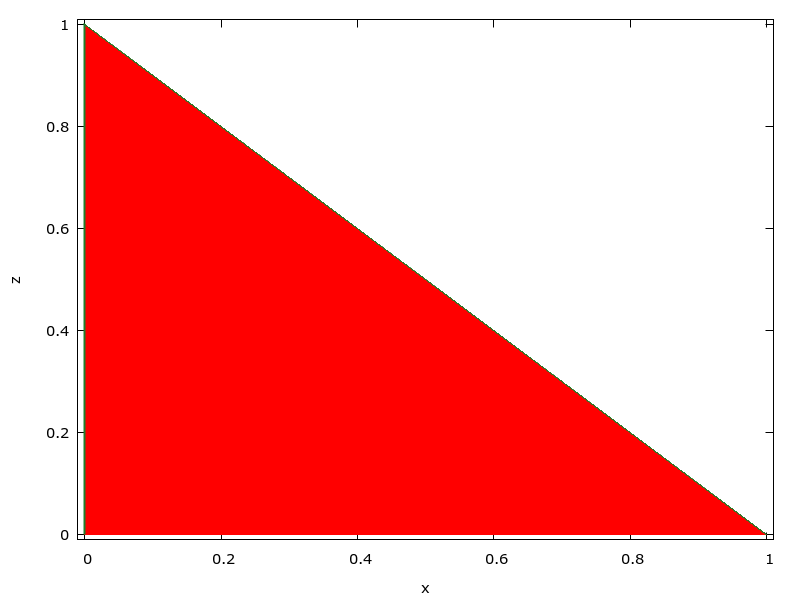}{$\mu = 2.218,$\newline$\beta = 3.65,$\newline$\gamma = 7.024$:\newline Plane $y = 0.$}
\hfill\subfigdef[0.235]{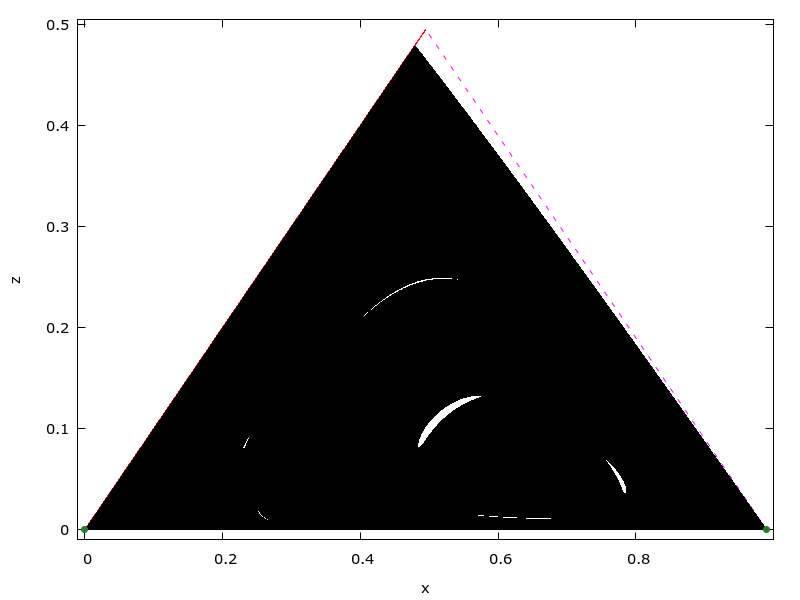}{$\mu = 2.218,$\newline$\beta = 3.65,$\newline$\gamma = 7.024$:\newline Plane $y = 0.01.$}
\hfill\subfigdef[0.235]{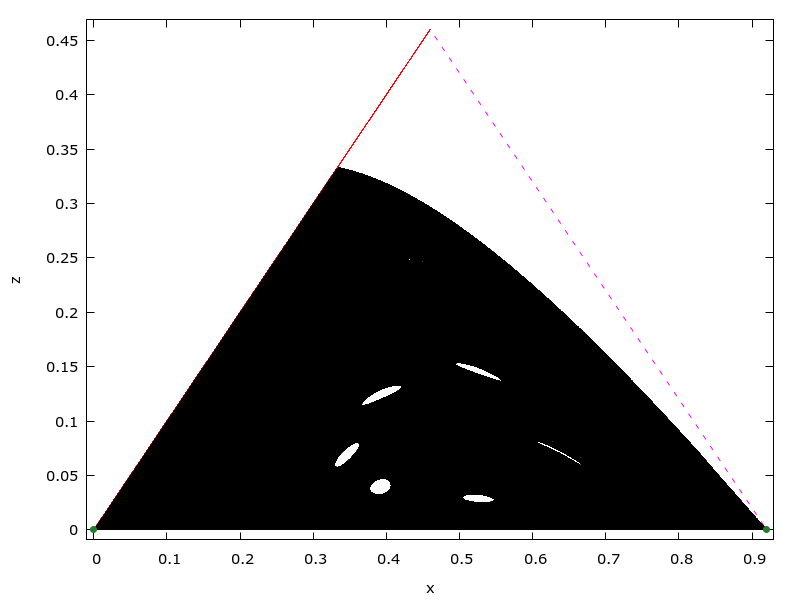}{$\mu = 2.218,$\newline$\beta = 3.65,$\newline$\gamma = 7.024$:\newline Plane $y = 0.08.$}
\hfill\subfigdef[0.235]{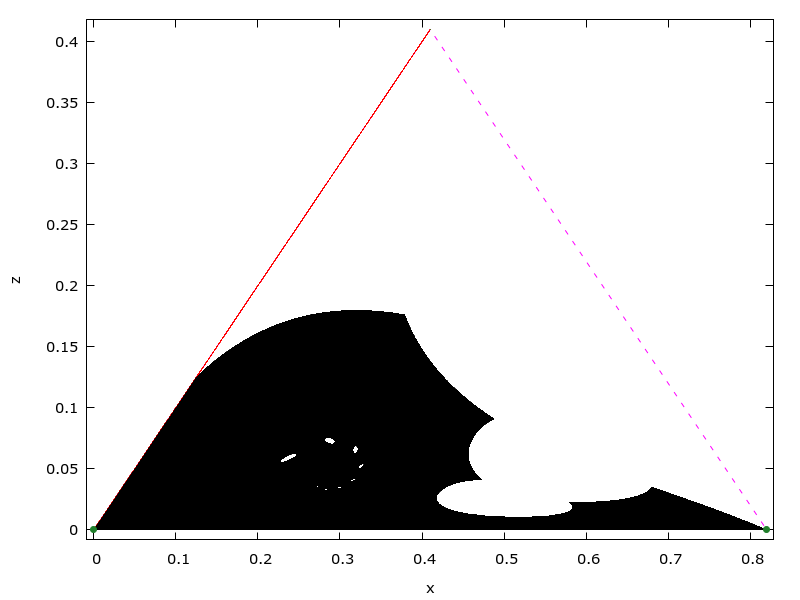}{$\mu = 2.218,$\newline$\beta = 3.65,$\newline$\gamma = 7.024$:\newline Plane $y = 0.18.$}
\hfill \strut \\[-2ex]
\hfill\subfigdef[0.235]{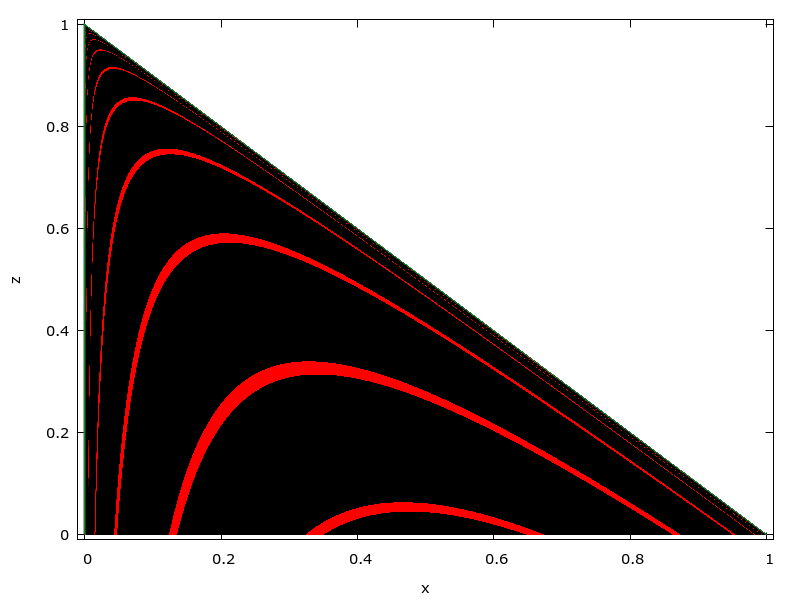}{$\mu = 2.977,$\newline$\beta = 4.225,$\newline$\gamma = 8.036$:\newline Plane $y = 0.$}
\hfill\subfigdef[0.235]{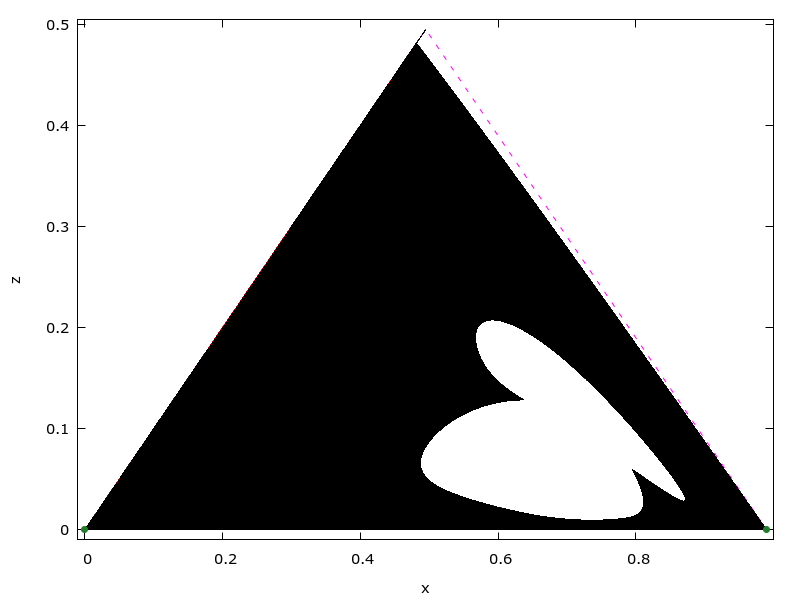}{$\mu = 2.977,$\newline$\beta = 4.225,$\newline$\gamma = 8.036$:\newline Plane $y = 0.01.$}
\hfill\subfigdef[0.235]{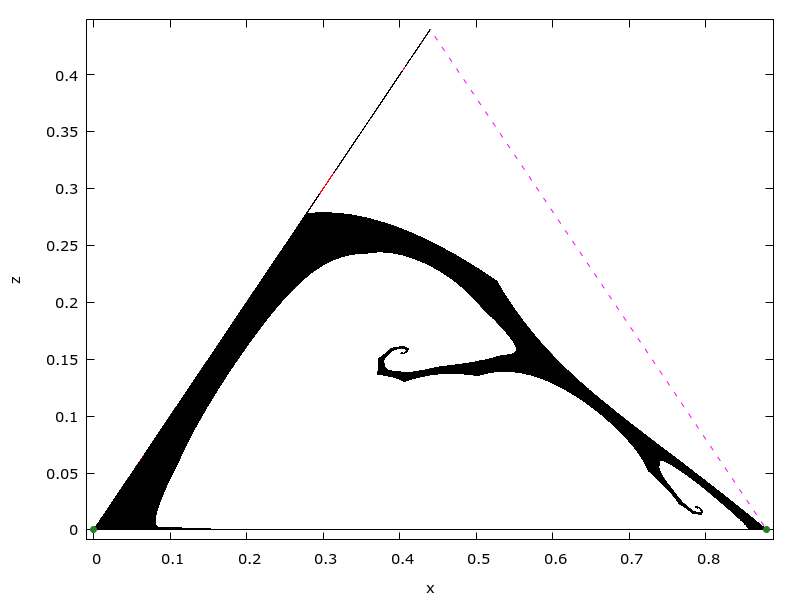}{$\mu = 2.977,$\newline$\beta = 4.225,$\newline$\gamma = 8.036$:\newline Plane $y = 0.12.$}
\hfill\subfigdef[0.235]{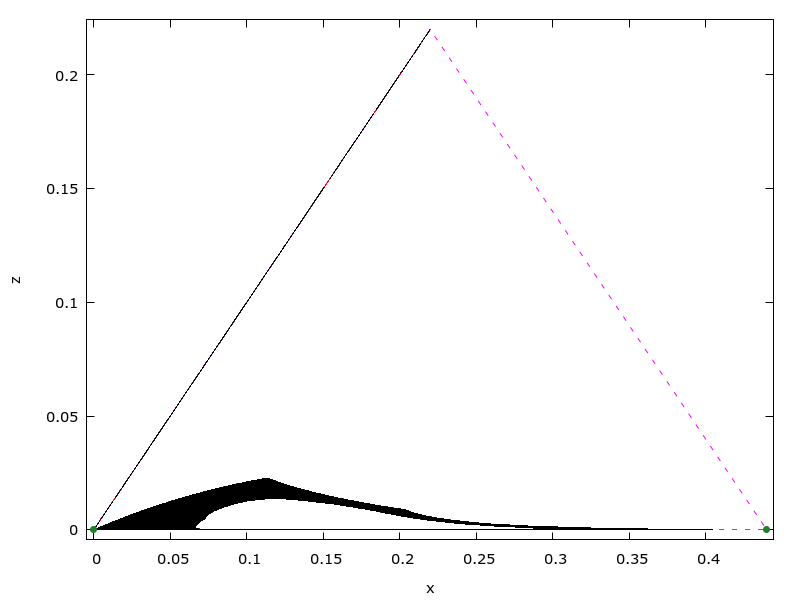}{$\mu = 2.977,$\newline$\beta = 4.225,$\newline$\gamma = 8.036$:\newline Plane $y = 0.56.$}
\hfill \strut
\end{center}
\captionsetup{width=\linewidth}
\caption{Plots of the intersection of $\SD$ with the planes $y =
\text{ctnt}$ for several choices of the planes and the parameters
$\mu,$ $\beta$ and $\gamma.$
Points drawn in \textcolor{DarkGreen}{dark green} color converge
to the fixed point \textcolor{DarkGreen}{$(0,0,0)$},
points in \textcolor{red}{red} converge to the fixed point
\textcolor{red}{$\left(\tfrac{\mu-1}{\mu},0,0\right)$},
points in \textcolor{orange}{orange} converge to the fixed point
\textcolor{orange}{$\left(\beta^{-1}, 1 - \mu^{-1} - \beta^{-1}, 0\right)$},
and points in black belong to the invariant set $\SD$ but
do not belong to the basin of attraction of any fixed point.
The dashed \textcolor{magenta}{magenta} lines show the boundary
of the cut of the plane $y = \text{ctnt}$ with the domain $\RD.$}\label{fig:TallConjuntInvariant}
\end{figure}

To get a, perhaps simpler, definition of the \emph{dynamical domain} $\SD$
we introduce the \emph{one-step escaping set}
$\Theta = \Theta(\mu,\beta,\gamma)$ of System~\eqref{eq:sistema}
defined as the set of points
$(x,y,z) \in \U$ such that $T\bigl((x,y,z)\bigr) \notin \U,$
and the \emph{escaping set} $\Gamma = \Gamma(\mu,\beta,\gamma)$
as the set of points $(x,y,z) \in \U$ such that
$T^n\bigl((x,y,z)\bigr) \notin \U$
for some $n\geq 1.$
Clearly,
\[
  \Gamma = \U \cap \left(\bigcup_{n=0}^{\infty} T^{-n}\bigl(\Theta\bigr)\right).
\]
\begin{figure}
\centering
\includegraphics[width=\textwidth]{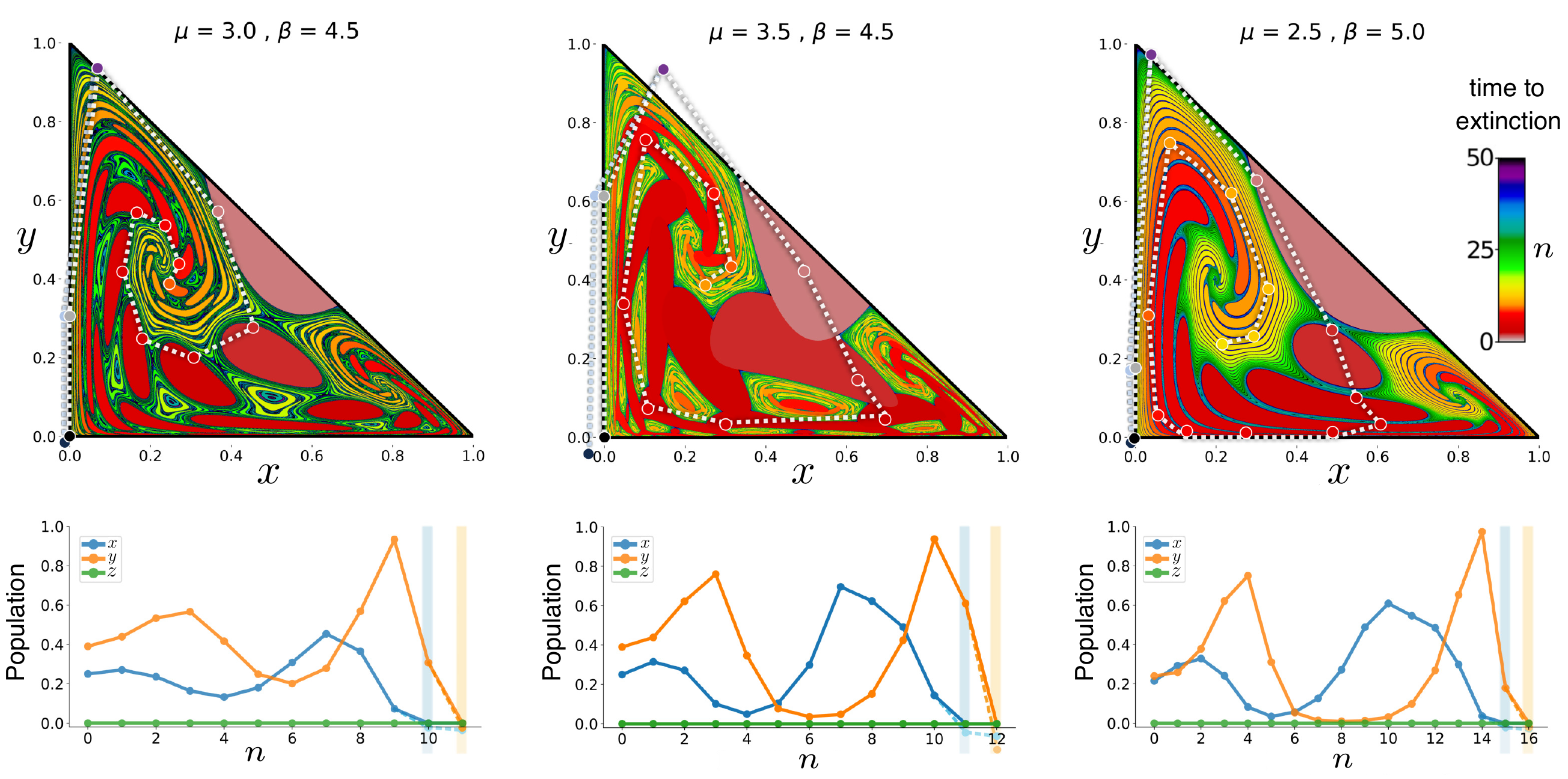}
\captionsetup{width=\linewidth}
\caption{(Upper) Escaping sets, $\Gamma$, obtained by iteration
for parameter values giving place to complex (fractal) shapes,
computed on the phase plane $z = 0.$
The colours display the time that a given orbit overcomes
the carrying capacity then going to extinction
(colour gradient indicates the number of iterations to extinction,
from $1$ (pink) to $50$ (violet) iterations).
The small circles connected by the dashed white line indicate
how iterates move from the initial condition
$(x_0, y_0, z_0) = (0.25, 0.39, 0)$ towards extinction.
(Bottom) Extinction time series for preys and predators, $y.$
From left to right:
$(x_0, y_0, z_0) = (0.25, 0.39, 0)$ and
$(\mu, \beta, \gamma) = (3.0, 4.5, 7.5);$
$(x_0, y_0, z_0) = (0.25,0.39,0.00)$ and
$(\mu,\beta,\gamma) = (3.5,4.5,7.5);$ and 
$(x_0, y_0, z_0) = (0.215, 0.24, 0)$ and
$(\mu,\beta,\gamma) = (2.5,5.0,7.5).$
The vertical bars indicate when iterates for $x$ and $y$
become negative after overcoming the carrying capacity.}\label{fig:Extinctions}
\end{figure}

Several examples of these escaping sets are displayed in the
first row of Figure~\ref{fig:Extinctions} for parameter values
giving place to complex (apparently fractal) sets.
Specifically, the shown escaping sets are coloured by
the number of iterates (from $1$ to $50$) needed to
leave the domain $\SD$ (sum of the populations above
the carrying capacity), after which populations jump
to negative values (extinction),
involving a catastrophic extinction.
The associated time series are displayed below each panel
in the second row in Figure~\ref{fig:Extinctions}.
After an irregular dynamics the prey and predator
populations become suddenly extinct,
as indicated by the vertical rectangles at the end
of the time series.
We want to emphasise that these extinctions are due
to the discrete nature of time.
That is, they have nothing to do with the $\omega$-limits
of the dynamical system that are found within and at the
borders of the simplex.
For the sake of clarity, these results are illustrated
setting the initial number of top predators to $z_0 = 0$
(see also \textsf{Movie-1.mp4} for an animation on how
the escaping sets change depending on the parameters).
However, similar phenomena are found in the full simplex
with an initial presence of all of the species.

The \emph{dynamical domain} or \emph{invariant set} of
System~\eqref{eq:sistema} can also be defined as:
\[
  \SD := \U \setminus \Gamma = \U \setminus \bigcup_{n=0}^{\infty} T^{-n}\bigl(\Theta\bigr).
\]
In other words, the initial conditions that do not belong to the
dynamical domain $\SD$ are, precisely, those that belong to the
escaping set $\Gamma$ which consists of those initial conditions in $\U$
that stay in $\U$ for some iterates (and hence are well defined),
and finally leave $\U$ in a catastrophic extinction that cannot be
iterated (System~\eqref{eq:sistema} is not defined on it).

In general we have $\Gamma \supset \Theta \neq \emptyset$ and, hence,
$\SD \varsubsetneq \U$
(that is, $\U$ may not be the dynamical domain of
System~\eqref{eq:sistema}).
On the other hand, for every $\mu,\beta,\gamma > 0,$
$\SD$ is non-empty (it contains at least the point $(0,0,0)$) and
$T$-invariant.
Hence
\[
   \SD = \bigl\{T^n\bigl((x,y,z)\bigr)\, \colon (x, y, z)\in\SD
                \text{ and } n\in\N\cup\{0\}\bigr\}.
\]
Moreover, since the map $T$ is (clearly) non-invertible,
a backward orbit of a point from $\SD$ is not uniquely defined.

As we have pointed out, the domain $\SD$ is at the same time
complicate and difficult to characterize.
However, despite of the fact that this knowledge is important for the
understanding  of the global dynamics, in this paper we will
omit this challenging study and we will consider
System~\eqref{eq:sistema} on the domain
\[
    \RD = \{(x,0,z)\in\U\} \cup \{(x,y,z)\in\U \, \colon y > 0
            \text{ and } x \geq z\}.
\]
(see Figure~\ref{fig:domain-walledsimplex})
which is an approximation of $\SD$ better than $\U,$ as stated in the
next proposition.
\begin{figure}[ht]
\begin{tikzpicture}
\begin{axis}[grid=major, view={215}{20}, width=300pt, height=200pt,
             xmin=0, xmax=1, ymin=0, ymax=1, zmin=0, zmax=1,
             xlabel=$x$, ylabel=$y$, ytick={0, 0.2, 0.4, 0.6, 0.8, 1},
             zlabel=$z$, ztick={0, 0.2, 0.4, 0.6, 0.8, 1},
             enlargelimits=0.025]
\addplot3[draw=blue, ultra thick, fill=blue, fill opacity=0.3] coordinates {(0,0,0) (0,0,1) (0.99,0,0)} \closedcycle; 
\addplot3[draw=none, fill=olive, fill opacity=0.7] coordinates {(0,0,0) (0,1,0) (1,0,0)} \closedcycle; 
\addplot3[draw=none, fill=magenta, fill opacity=0.3] coordinates {(1,0,0) (0.5,0,0.5) (0,1,0)} \closedcycle; 
\addplot3[color=magenta, ultra thick] coordinates {(-0.01,1,0) (0.999,0,0)};
\addplot3[color=black, ultra thick] coordinates {(0.5,0,0.5) (0,0.9925,0)};
\addplot3[draw=none, fill=gray, fill opacity=0.3] coordinates {(0,0,0) (0.5,0,0.5) (0,1,0)} \closedcycle; 
\addplot3[color=gray, ultra thick] coordinates {(0,1,0) (0,0,0)};
\addplot3[color=blue] coordinates {(0,0,0) (0.5,0,0.5)};
\end{axis}
\end{tikzpicture}
\captionsetup{width=\linewidth}
\caption{Plot of the domain $\RD$.
The \textcolor{blue}{``wall'' $y=0$}: \textcolor{blue}{$\{(x,0,z)\in\U\}$} is drawn in \textcolor{blue}{blue}.
The \textcolor{DarkGreen}{face  $z=0$}:
\textcolor{DarkGreen}{$\{(x,y,0)\in\U\}$} is drawn in \textcolor{DarkGreen}{olive} color;
The \textcolor{magenta}{face  $x+y+z=1$}:
\textcolor{magenta}{$\{(x,y,z)\in\U \, \colon y>0,\ x \geq z \text{ and } x+y+z = 1\}$} in \textcolor{magenta}{magenta},
and the \textcolor{darkgray}{face  $x=z$}:
\textcolor{darkgray}{$\{(x,y,x)\in\U \, \colon y>0\}$} in \textcolor{darkgray}{gray}.}\label{fig:domain-walledsimplex}
\end{figure}
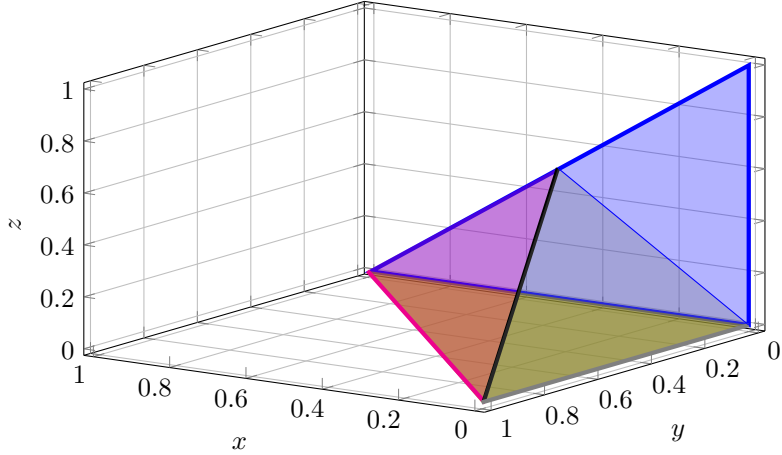

\begin{proposition}\label{prop:domain-walledsimplex}
For System~\eqref{eq:sistema} and all parameters $(\mu,\beta,\gamma) \in \Q$
we have
\[
    \{(x,0,z)\in\U\} \cup \{(0,y,0)\in\U\} \subset \SD \subset \RD.
\]
\end{proposition}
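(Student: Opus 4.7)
The plan is to prove the two inclusions separately. Both reduce to elementary verifications using Proposition~\ref{prop:simpledynamics} and the definition of $\SD$ as the set of initial conditions with forward orbits entirely contained in $\U.$

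For the first inclusion, $\{(x,0,z)\in\U\} \cup \{(0,y,0)\in\U\} \subset \SD,$ I would argue as follows. The set $\{(x,0,z)\in\U\}$ is $T$-invariant by Proposition~\ref{prop:simpledynamics}(c), so any forward orbit starting there remains in the set and in particular in $\U;$ hence it lies in $\SD.$ For the $y$-axis piece, Proposition~\ref{prop:simpledynamics}(b) gives $T\bigl((0,y,0)\bigr) = (0,0,0),$ which by part (a) is a fixed point in $\U,$ so the forward orbit is eventually constant at the origin and again lies in $\SD.$

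For the second inclusion, $\SD \subset \RD,$ I would work with the contrapositive and show $\U \setminus \RD \subset \Theta \subset \Gamma.$ A short set-theoretic computation gives
\[
   \U \setminus \RD = \bigl\{(x,y,z) \in \U \,\colon y > 0 \text{ and } x < z\bigr\}.
\]
For such a point, the second component of
\[
   T\bigl((x,y,z)\bigr) = \bigl(\mu x(1-x-y-z),\ \beta y(x-z),\ \gamma y z\bigr)
\]
equals $\beta y(x-z),$ which is strictly negative since $\beta, y > 0$ and $x-z < 0.$ Therefore $T\bigl((x,y,z)\bigr) \notin \U,$ so $(x,y,z) \in \Theta \subset \Gamma,$ which means $(x,y,z) \notin \SD.$

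I do not expect any real obstacle here: the proof is essentially a bookkeeping exercise combining the fact that the $y$-component of $T$ changes sign across the face $\{x = z\}$ with the extinction-through-the-axes behaviour already encoded in Proposition~\ref{prop:simpledynamics}. The only minor subtlety is to keep track of the condition $y > 0$ in the definition of $\RD,$ so that the wall $\{y = 0\}$ is handled by the first inclusion and does not accidentally leak into the complement computation for the second one.
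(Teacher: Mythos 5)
Your proposal is correct and follows essentially the same route as the paper: the first inclusion is deduced from Proposition~\ref{prop:simpledynamics} (invariance of the wall $\{y=0\}$ and collapse of the $y$-axis to the origin), and the second is obtained by identifying $\U\setminus\RD$ with $\{(x,y,z)\in\U \colon y>0 \text{ and } z>x\}$ and observing that the second coordinate of $T$ is negative there, so these points lie in the one-step escaping set $\Theta$ and hence outside $\SD.$ No gaps.
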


\begin{proof}
The fact that 
$\{(x,0,z)\in\U\} \cup \{(0,y,0)\in\U\} \subset \SD$
follows directly from Proposition~\ref{prop:simpledynamics}.
To prove the other inclusion observe that
\[
   \RD = \U \setminus \{(x,y,z)\in\U \, \colon y > 0 \text{ and } z > x\},
\]
and for every $(x,y,z)\in\U$ with $y > 0$ and $z > x$ we have
$\beta y (x - z) < 0$ because $\beta > 0.$
Consequently,
$
  \{(x,y,z)\in\U \, \colon y > 0 \text{ and } z > x\} \subset \Theta,
$
and hence,
\[
\SD = \U \setminus \bigcup_{n=0}^{\infty} T^{-n}\bigl(\Theta\bigr) 
    \subset \U \setminus \Theta
    \subset \U \setminus \{(x,y,z)\in\U \, \colon y > 0 \text{ and } z > x\} = \RD.
\]
\end{proof}

\section{Fixed points and local stability}
This section is devoted to compute the biologically-meaningful fixed
points of $T$ in $\RD,$ 
and to analyse their local stability.
This study will be carried out in terms of the positive parameters
$\mu,\beta,\gamma.$

The dynamical system defined by~\eqref{eq:sistema} has the following
four (biologically meaningful) fixed points in the domain $\RD$
(see Figure~\ref{fig:EntradesISortides}):
\begin{align*}
  P_1^* &= (0, 0, 0), \\
  P_2^* &= \left(\frac{\mu - 1}{\mu}, 0, 0 \right),\\
  P_3^* &= \left(\frac{1}{\beta}, 1 - \frac{1}{\mu} - \frac{1}{\beta}, 0 \right), \\
  P_4^* &= \left(
      \frac{1}{2} \left( 1 - \mu^{-1} + \beta^{-1} - \gamma^{-1} \right),
      \frac{1}{\gamma}, 
      \frac{1}{2} \left( 1 - \mu^{-1} - \beta^{-1} - \gamma^{-1} \right) \right).
\end{align*}
Notice that the system admits a fifth fixed point
$P_5^* = \left(0, \tfrac{1}{\gamma}, -\tfrac{1}{\beta}\right),$
which is not biologically meaningful since it has a negative
coordinate (recall that $\beta > 0$), and thus it will not be taken
into account in this study.

\begin{figure}[ht]
\centering
\includegraphics[width=\textwidth]{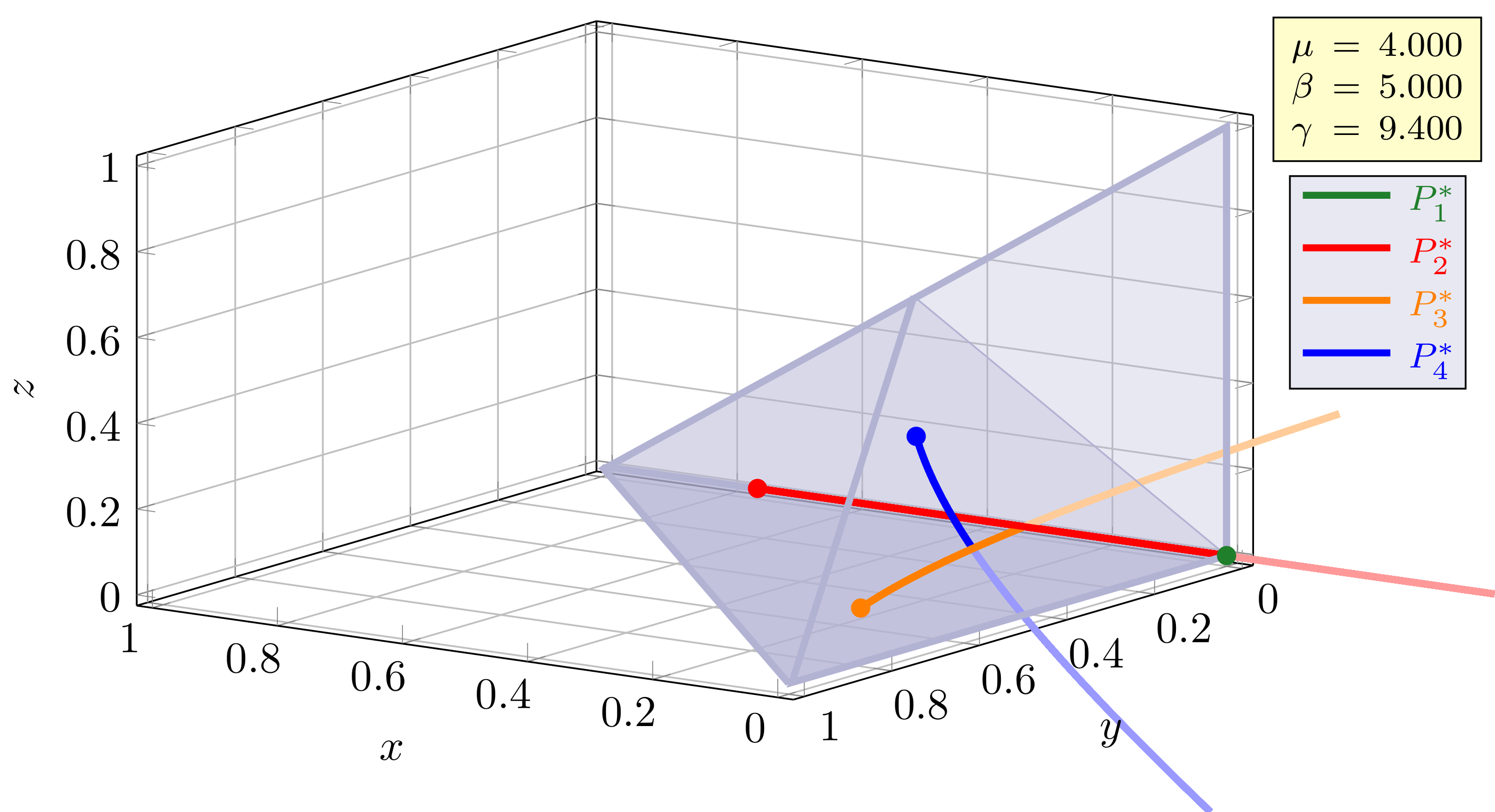}
\captionsetup{width=\linewidth}
\caption{The fixed point 
\textcolor{DarkGreen}{$P^*_1$} in \textcolor{DarkGreen}{dark green}
color and the paths described by
\textcolor{red}{$P^*_2$} in \textcolor{red}{red},
\textcolor{orange}{$P^*_3$} in \textcolor{orange}{orange} and
\textcolor{blue}{$P^*_4$} in \textcolor{blue}{blue}
in the domain 
\textcolor{ColorMapViolet!70!blue}{$\RD$} 
(plotted in \textcolor{ColorMapViolet!70!blue}{violet})
when the parameters $(\mu,\beta,\gamma)$ follow the path
$
   (\mu(t),\beta(t),\gamma(t)) = (3.3,2.5,4.4)t + (0.7,2.5,5)
$
with $t$ ranging from 0 to 1.
The pieces of the paths outside the domain
\textcolor{ColorMapViolet!70!blue}{$\RD$},
which correspond to non biologically meaningful situations, are drawn
with the color softened.
The path described by the fixed point 
\textcolor{red}{$P^*_2$} in \textcolor{ColorMapViolet!70!blue}{$\RD$}
bifurcates from 
\textcolor{DarkGreen}{$P^*_1$} when $\mu = 1.$
The path described by 
\textcolor{orange}{$P^*_3$} in \textcolor{ColorMapViolet!70!blue}{$\RD$}
bifurcates from 
\textcolor{red}{$P_2^*$} when $\mu = \tfrac{\beta}{\beta -1}.$
The path of 
\textcolor{blue}{$P^*_4$} in \textcolor{ColorMapViolet!70!blue}{$\RD$}
bifurcates from 
\textcolor{orange}{$P_3^*$} when $\mu^{-1}+\beta^{-1}+\gamma^{-1} = 1$
(or, equivalently, when $\mu = \tfrac{\beta\gamma}{(\beta - 1)\gamma - \beta}$).}\label{fig:EntradesISortides}
\end{figure}

The fixed points
$P_1^*$, $P_2^*$, and $P_3^*$
are boundary equilibria, while $P_4^*$ is a boundary equilibrium if
$\mu^{-1}+\beta^{-1}+\gamma^{-1} = 1$
and interior otherwise.
The fixed point $P_1^*$ is the origin, representing the extinction of
all the species. $P_2^*$ is a boundary fixed point, with absence of
the two predator species. The point $P_3^*$ is the boundary fixed
point in the absence of the top-level predator $z=0$, while the
point $P_4^*$, when it is located in the interior of $\RD$,
corresponds to a coexistence equilibrium.

The next lemma gives necessary and sufficient conditions in order
that the fixed points $P_1^*, P_2^*, P_3^*$, and $P_4^*$ are
biologically meaningful (belong to the domain $\U$ and, hence, to
$\RD$ --- see, for instance, Figure~\ref{fig:EntradesISortides} and
\textsf{Movie-2.avi} in the Supplementary Material; see also the right
part of Figure~\ref{SuPeRfIgUrE}).

\begin{lemma}\label{lem:existence:fixpoints}
The following statements hold for every parameters' choice
$(\mu,\beta,\gamma)\in \Q$:
\begin{enumerate}[\boldmath $P_1^*:$]
\item The fixed point $P_1^*$ belongs to $\RD.$
\item The fixed point $P_2^*$ belongs to $\RD$ if and only if $\mu
\geq 1.$ Moreover, $P_2^* = P_1^*$ if and only if $\mu = 1.$
\item The fixed point $P_3^*$ belongs to $\RD$ if and only if
$\mu \geq \tfrac{\beta}{\beta-1} \geq \tfrac{5}{4}$
(which is equivalent to $\tfrac{1}{\mu} + \tfrac{1}{\beta} \leq 1$).
Moreover, $P_3^* = P_2^*$ if and only if $\mu = \tfrac{\beta}{\beta-1}.$
\item The fixed point $P_4^*$ belongs to $\RD$ if and only if
$\mu^{-1}+\beta^{-1}+\gamma^{-1} \leq 1$
(which is equivalent to $\mu \geq \tfrac{\beta\gamma}{(\beta-1)\gamma - \beta}$).
Moreover, $P_4^* = P_3^*$ if and only if $\mu^{-1}+\beta^{-1}+\gamma^{-1} = 1.$
\end{enumerate}
\end{lemma}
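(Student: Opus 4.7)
The plan is to translate membership in $\RD$ into explicit inequalities on the coordinates of each candidate fixed point and then solve those inequalities in the parameters $\mu,\beta,\gamma$. Recall that
\[
\RD = \{(x,0,z)\in\U\}\cup\{(x,y,z)\in\U\,\colon y>0 \text{ and } x\geq z\},
\]
so a point $(x,y,z)$ lies in $\RD$ precisely when (i) $x,y,z\geq 0$, (ii) $x+y+z\leq 1$, and (iii) either $y=0$ or $x\geq z$. I would check (i)--(iii) for each $P_i^*$ in turn, reading off the parameter conditions directly; the coincidence claims $P_i^*=P_{i-1}^*$ then come from identifying when the distinguishing coordinate vanishes.

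For $P_1^*$ every condition is automatic. For $P_2^*=((\mu-1)/\mu,0,0)$, conditions (ii) and (iii) are trivial (since the $x$-coordinate is $<1$ and $y=z=0$), so only $x\geq 0$ matters; this gives $\mu\geq 1$, with equality exactly when $P_2^*=P_1^*$.

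For $P_3^*=(1/\beta,1-1/\mu-1/\beta,0)$, the $x$-coordinate is positive, the simplex sum equals $1-1/\mu\leq 1$, and either $y=0$ or $y>0$ together with $x=1/\beta\geq 0=z$; so the only binding constraint is $y\geq 0$, i.e.\ $1/\mu+1/\beta\leq 1$, which rearranges to $\mu\geq \beta/(\beta-1)$. The extra inequality $\beta/(\beta-1)\geq 5/4$ follows from $\beta\leq 5$ because $t\mapsto t/(t-1)$ is decreasing on $(1,\infty)$. The equality $P_3^*=P_2^*$ occurs precisely when $y=0$, i.e.\ $\mu=\beta/(\beta-1)$; a short check confirms the $x$-coordinates then also agree.

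The most substantive case is $P_4^*$, but it remains a direct computation. Since $y=1/\gamma>0$, I am forced into the second piece of $\RD$ and must verify $x\geq z$; here
\[
x-z=\tfrac{1}{2}\bigl[(1-\mu^{-1}+\beta^{-1}-\gamma^{-1})-(1-\mu^{-1}-\beta^{-1}-\gamma^{-1})\bigr]=\beta^{-1}>0,
\]
so the wall condition is automatic. The simplex sum collapses to $x+y+z=1-\mu^{-1}\leq 1$, and $x\geq z+\beta^{-1}>0$ once $z\geq 0$, so the single binding inequality is $z\geq 0$, i.e.\ $\mu^{-1}+\beta^{-1}+\gamma^{-1}\leq 1$, which rearranges to $\mu\geq \beta\gamma/((\beta-1)\gamma-\beta)$ (noting the denominator is positive throughout $\Q$). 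Finally, $P_4^*=P_3^*$ forces $z=0$; substituting $\gamma^{-1}=1-\mu^{-1}-\beta^{-1}$ into $x$ and $y$ gives $x=1/\beta$ and $y=1-1/\mu-1/\beta$, matching $P_3^*$.

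There is no real obstacle: the only subtlety is to remember that $\RD$ is a union of two pieces, so one must handle the case $y=0$ separately from $y>0$, but since at most one of $P_2^*,P_3^*,P_4^*$ has $y>0$ this is easily dispatched. The arithmetic monotonicity used to obtain the bound $5/4$ is the only place where the shape of the cuboid $\Q$ enters the argument.
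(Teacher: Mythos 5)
Your proposal is correct and follows essentially the same route as the paper: direct verification of the coordinate inequalities defining $\RD$ for each fixed point, with the parameter conditions read off from the single binding constraint in each case. If anything, you are slightly more explicit than the paper about the ``only if'' directions (identifying which inequality is binding) and about why the denominator $(\beta-1)\gamma-\beta$ is positive on $\Q$, but the substance is identical.
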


\begin{proof}
The statements concerning $P_1^*$ and $P_2^*$ follow straightforwardly
from their formulae (see also Figure~\ref{fig:EntradesISortides})
since, 
for $\mu \geq 1,$ $\frac{\mu - 1}{\mu} \in \left[0, \tfrac{3}{4}\right].$

For the fixed point $P_3^*$ when
$\tfrac{1}{\mu} + \tfrac{1}{\beta} \leq 1$
we have
$0 < \frac{1}{\beta},$
$0 \leq 1 - \frac{1}{\mu} - \frac{1}{\beta},$ and
$
 \frac{1}{\beta} + \left(1 - \frac{1}{\mu} - \frac{1}{\beta}\right) = 
 1 - \frac{1}{\mu} < 1.
$
So, 
\[
  P_3^* \in 
  \{(x,y,0)\in\R^3\, \colon x,y \geq 0 \text{ and } x+y \leq 1\} \subset
  \{(x,y,0) \in \RD\} \subset \RD.
\]
Moreover, when $\tfrac{1}{\mu} + \tfrac{1}{\beta} = 1$ we have
\[
  P_3^* = 
    \left(\frac{1}{\beta}, 1 - \frac{1}{\mu} - \frac{1}{\beta}, 0 \right) =
    \left(1-\frac{1}{\mu}, 0, 0 \right) = P_2^*.
\]

For the fixed point 
$
P_4^* = \left(
      \tfrac{1}{2} \left( 1 - \mu^{-1} + \beta^{-1} - \gamma^{-1} \right),
      \tfrac{1}{\gamma},
      \tfrac{1}{2} \left( 1 - \mu^{-1} - \beta^{-1} - \gamma^{-1} \right)
      \right)
$ 
when
$\mu^{-1}+\beta^{-1}+\gamma^{-1} \leq 1$
we have
\begin{align*}
0 & < \frac{1}{\gamma},\\
0 & \leq \frac{1}{2} \left( 1 - \mu^{-1} - \beta^{-1} - \gamma^{-1} \right),\\
0 & < \beta^{-1} \leq \frac{1}{2} \left( 1 - \mu^{-1} + \beta^{-1} - \gamma^{-1} \right),\text{ and}\\
  & \phantom{<} 
    \frac{1}{2} \left( 1 - \mu^{-1} + \beta^{-1} - \gamma^{-1} \right) +
    \frac{1}{\gamma} +
    \frac{1}{2} \left( 1 - \mu^{-1} - \beta^{-1} - \gamma^{-1} \right) =
    1 - \frac{1}{\mu} < 1.
\end{align*}
So, 
\[
  P_4^* \in
  \{(x,y,z)\in\U \, \colon y > 0 \text{ and } x \geq z\} \subset\RD.
\]
Moreover, when $\mu^{-1}+\beta^{-1}+\gamma^{-1} = 1$
($1 - \mu^{-1} - \beta^{-1} - \gamma^{-1} = 0$) we have
\begin{align*}
&\gamma^{-1} = 1 - \mu^{-1}-\beta^{-1},\text{ and}\\
&1 - \mu^{-1} + \beta^{-1} - \gamma^{-1} = 
 1 - \mu^{-1} - \beta^{-1} - \gamma^{-1} + 2\beta^{-1} =
 2\beta^{-1}.
\end{align*} 
So,
\begin{multline*}
P_4^* = \left(
      \frac{1}{2} \left( 1 - \mu^{-1} + \beta^{-1} - \gamma^{-1} \right),
      \frac{1}{\gamma},
      \frac{1}{2} \left( 1 - \mu^{-1} - \beta^{-1} - \gamma^{-1} \right)
      \right) = \\ 
      \left(\beta^{-1}, 1 - \mu^{-1}-\beta^{-1}, 0 \right) = P_3^*.
\end{multline*}
\end{proof}

Henceforth, this section will be devoted to the study of the local
stability and dynamics around the fixed points $P_1^*,\ldots,P_4^*$
for parameters moving in $\Q.$
This work is carried out by means of four lemmas (Lemmas~\ref{lem:P1}
to~\ref{lem:P4}). The information provided by them is summarized
graphically in Figure~\ref{SuPeRfIgUrE} and Figure~\ref{diag}.

The study of the stability around the fixed points is based on the
computation of the eigenvalues of its Jacobian matrix. In our case,
the Jacobian matrix of map~\eqref{eq:sistema} at a point $(x, y, z)$
is
\[
J(x,y,z) = \begin{pmatrix}
   \mu(1-2x-y-z) & -\mu x      & -\mu x \\
   \beta y       & \beta (x-z) & -\beta y \\
   0             & \gamma z    & \gamma y
\end{pmatrix}
\]
and has determinant 
$\det(J(x,y,z)) = \mu \beta \gamma xy ( 1 - 2x-2z).$
and has determinant $\det(J(x,y,z)) = \mu \beta \gamma xy ( 1 - 2x-2z).$

The first one of these lemmas follows from a really simple computation.

\begin{lemma}\label{lem:P1}
The point $P_1^* = (0,0,0)$ is a boundary fixed point of
system~\eqref{eq:sistema} for any positive $\mu$, $\beta$, $\gamma.$
Moreover, $P_1^*$ is:
\begin{itemize}
\item non-hyperbolic when $\mu = 1,$
\item a locally asymptotically stable sink node when $0 < \mu < 1$, and
\item a saddle with an unstable manifold of dimension 1, locally tangent
to the $x$-axis, when $\mu > 1.$
\end{itemize}
\end{lemma}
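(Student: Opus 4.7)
The plan is to linearise System~\eqref{eq:sistema} at $P_1^{*}$ and read off the eigenvalues of the resulting Jacobian. Substituting $(x,y,z)=(0,0,0)$ into the expression for $J(x,y,z)$ given just before the lemma, every off-diagonal entry vanishes (because each one carries a factor of $x$, $y$ or $z$), and the second and third diagonal entries vanish as well. Hence $J(P_1^{*})=\operatorname{diag}(\mu,0,0)$, whose spectrum is $\{\mu,0,0\}$ with eigenvectors $(1,0,0)^{T}$, $(0,1,0)^{T}$ and $(0,0,1)^{T}$ respectively.

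Next I would invoke the standard linear stability trichotomy for a fixed point of a smooth discrete-time map: the fixed point is hyperbolic exactly when no eigenvalue lies on the unit circle, and in that case the local dynamics is determined by the linearisation via the Hartman--Grobman theorem. Since two eigenvalues are $0$, the behaviour is governed entirely by the modulus of $\mu$ relative to $1$. If $0<\mu<1$ all three eigenvalues lie strictly inside the open unit disc, so $P_1^{*}$ is a locally asymptotically stable sink node. If $\mu=1$, the eigenvalue $\mu$ sits on the unit circle, so $P_1^{*}$ is non-hyperbolic. If $\mu>1$, exactly one eigenvalue lies outside the unit disc, the other two lying inside, so by the stable/unstable manifold theorem $P_1^{*}$ is a saddle with a one-dimensional local unstable manifold tangent at the origin to the eigenspace of $\mu$, i.e.\ to the $x$-axis.

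I do not expect any real obstacle: because $J(P_1^{*})$ is already diagonal, the proof is essentially a substitution followed by the standard classification, with no eigenvalue computation needed. The only minor subtlety is remembering that the standing hypothesis $\mu>0$ collapses $|\mu|<1$ to $0<\mu<1$, so the three cases in the statement exhaust all admissible parameters in $\Q$.
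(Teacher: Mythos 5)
Your proposal is correct and follows essentially the same route as the paper: evaluate the Jacobian at the origin to obtain $\operatorname{diag}(\mu,0,0)$, read off the eigenvalues and eigenvectors, and conclude via the Hartman--Grobman theorem together with the standard classification of hyperbolic fixed points. No gaps.
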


\begin{proof}
The Jacobian matrix of system~\eqref{eq:sistema} at $P_1^*$ is
\[
J(P_1^*) = \begin{pmatrix}
  \mu & 0 & 0 \\
    0 & 0 & 0 \\
    0 & 0 & 0
\end{pmatrix},
\]
which has an eigenvalue
$\lambda_{1,1} = \mu$ with eigenvector $(1,0,0),$
and two eigenvalues
$\lambda_{1,2} = \lambda_{1,3} = 0$ with eigenvectors $(0,1,0)$ and $(0,0,1)$
(see Figure~\ref{SuPeRfIgUrE}). 
Hereafter we will label the eigenvalues $j$ of a fixed point 
$P_i^*$ as $\lambda_{i,j}$, with $i = 1,\dots,4$ and $j = 1,\dots,3.$
The assertion of the lemma follows from the Hartman-Grobman Theorem.
\end{proof}

\begin{lemma}\label{lem:P2}
The point $P_2^* = \left(1-\mu^{-1}, 0, 0\right)$ is a boundary
fixed point of the system~\eqref{eq:sistema} for all parameters such
that $\mu^{-1}\leq1.$ 
In particular, for all of the values of the parameters in $\Q$,
the fixed point $P_2^*$ is \emph{non-hyperbolic} if and only if:
\begin{itemize}
\item $\mu = 1,$ that is, when $P_2^* = P_1^*$;
\item $\mu = \tfrac{\beta}{\beta-1}$, that is, when $P_2^* = P_3^*$;
\item $\mu = 3.$
\end{itemize}
The region of the parameter's cuboid where $P_2^*$ is \emph{hyperbolic} is
divided into the following three layers:
\begin{labeledlist}{$\tfrac{\beta}{\beta-1} < \mu < 3$}
\item[$1 < \mu < \tfrac{\beta}{\beta-1}$]
   $P_2^*$ is a locally asymptotically stable sink node, meaning that
   the two predator species go to extinction.
\item[$\tfrac{\beta}{\beta-1} < \mu < 3$]
   $P_2^*$ is a saddle with an unstable manifold of dimension 1
   locally tangent to the $x$-axis.
\item[$3 < \mu \le 4$]
   $P_2^*$ is a saddle with an unstable manifold of dimension 2
   locally tangent to the plane generated by the vectors $(1,0,0)$
   and $\left(1, \tfrac{2-\mu}{\mu-1} - \tfrac{\beta}{\mu}, 0\right).$
\end{labeledlist}\smallskip
From Lemma \ref{lem:P1} it is clear that the fixed points 
$P_{1,2}^*$ and $P_{2,3}^*$
undergo a transcritical bifurcation at 
$\mu = 1$ and $\mu = \tfrac{\beta}{\beta - 1}$
(in other words, $\mu^{-1}+\beta^{-1} = 1$),
respectively.
\end{lemma}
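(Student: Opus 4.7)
The plan is to prove the lemma by a direct computation of the Jacobian $J(P_2^*)$, followed by spectral analysis and application of the Hartman--Grobman theorem. Substituting $P_2^* = (1-\mu^{-1}, 0, 0)$ into the formula for $J(x,y,z)$ displayed just before Lemma~\ref{lem:P1}, the $y=z=0$ rows/columns collapse nicely and one obtains an upper-triangular matrix whose diagonal entries
\[
\lambda_{2,1} = 2-\mu, \qquad \lambda_{2,2} = \tfrac{\beta(\mu-1)}{\mu}, \qquad \lambda_{2,3} = 0
\]
are therefore its eigenvalues. Since $\lambda_{2,3}$ is always strictly inside the unit disc, hyperbolicity can only fail through $|\lambda_{2,1}|=1$ or $|\lambda_{2,2}|=1$. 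The first gives $\mu=1$ or $\mu=3$; the second, using that $\lambda_{2,2}\geq 0$ on $\Q$, reduces to $\mu=\beta/(\beta-1)$. Lemma~\ref{lem:existence:fixpoints} then identifies $\mu=1$ with the coincidence $P_2^*=P_1^*$ and $\mu=\beta/(\beta-1)$ with $P_2^*=P_3^*$, matching the three bulleted non-hyperbolicity conditions.

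Next I would partition the hyperbolic part of $\Q$ by counting eigenvalues outside the unit disc. Since $\beta\in[2.5,5]$ yields $\beta/(\beta-1)\in[5/4,5/3]$, which lies strictly below $3$, the critical values of $\mu$ cut $(0,4]$ into the pieces $(0,1)$, $\bigl(1,\tfrac{\beta}{\beta-1}\bigr)$, $\bigl(\tfrac{\beta}{\beta-1},3\bigr)$ and $(3,4]$. A direct inspection of the moduli shows that in $\bigl(1,\tfrac{\beta}{\beta-1}\bigr)$ both $\lambda_{2,1}$ and $\lambda_{2,2}$ lie inside the unit disc (sink); that as $\mu$ crosses $\beta/(\beta-1)$ the eigenvalue $\lambda_{2,2}$ leaves through $+1$, yielding a one-dimensional unstable direction on $\bigl(\tfrac{\beta}{\beta-1},3\bigr)$; and that as $\mu$ crosses $3$ the eigenvalue $\lambda_{2,1}$ leaves through $-1$, producing a two-dimensional unstable subspace on $(3,4]$. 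The restriction $\mu>1$ is needed precisely to keep $P_2^*\in\RD$ (Lemma~\ref{lem:existence:fixpoints}).

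The only not-quite-mechanical step is pinpointing the unstable eigenspaces quoted in the statement. The eigenvector of $\lambda_{2,1}$ is $(1,0,0)$ by upper-triangularity. For $\lambda_{2,2}$, solving $(J(P_2^*)-\lambda_{2,2}I)v=0$, the third row forces $v_3=0$, the second row is automatically satisfied, and the first row gives $v_1\bigl(\mu(2-\mu)-\beta(\mu-1)\bigr)=\mu(\mu-1)v_2$; normalising $v_1=1$ recovers the vector $\bigl(1,\tfrac{2-\mu}{\mu-1}-\tfrac{\beta}{\mu},0\bigr)$. Hartman--Grobman then yields the asserted local tangencies of the invariant manifolds to these eigenspaces. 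Finally, the transcritical observation follows because at $\mu=1$ (resp.\ $\mu=\beta/(\beta-1)$) the pairs $P_1^*,P_2^*$ (resp.\ $P_2^*,P_3^*$) collide along a common eigenvector and exchange stability as the associated eigenvalue crosses $+1$. The main---and quite minor---obstacle is to verify that the threshold inequalities interact correctly within the cuboid $\Q$ so that the three listed regimes exhaust the hyperbolic locus and no further case arises (for instance, $\lambda_{2,2}$ crossing $-1$, which is ruled out by $\lambda_{2,2}\geq 0$).
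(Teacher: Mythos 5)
Your proof is correct and follows essentially the same route as the paper's: compute $J(P_2^*)$, read off the eigenvalues $2-\mu$, $\beta\left(1-\mu^{-1}\right)$ and $0$ from its triangular form, locate their moduli relative to $1$ on the cuboid $\Q$, and invoke Hartman--Grobman; the paper simply states the eigenvectors where you derive them by solving the linear system. (Incidentally, your analysis correctly identifies the unstable direction in the layer $\tfrac{\beta}{\beta-1}<\mu<3$ as the eigenvector of $\lambda_{2,2}$, namely $\bigl(1,\tfrac{2-\mu}{\mu-1}-\tfrac{\beta}{\mu},0\bigr)$, rather than the $x$-axis claimed in the lemma's statement.)
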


\begin{proof}
The Jacobian matrix of system~\eqref{eq:sistema} at $P_2^*$ is
\[
J(P_2^*) = \begin{pmatrix}
   2 - \mu & 1 - \mu                            & 1 - \mu \\
         0 & \beta\left(1-\tfrac{1}{\mu}\right) & 0 \\
         0 &                                  0 & 0
\end{pmatrix},
\]
which has:
\begin{itemize}
\item an eigenvalue $\lambda_{2,1} = 2 - \mu$ with eigenvector $(1,0,0),$
\item an eigenvalue $\lambda_{2,2} = \beta\left(1-\tfrac{1}{\mu}\right)$
with eigenvector 
$\left(1, \tfrac{2-\mu}{\mu-1} - \tfrac{\beta}{\mu}, 0\right),$
\item and an eigenvalue $\lambda_{2,3} = 0$ with eigenvector
$\left(1, 0, \tfrac{2-\mu}{\mu-1}\right).$
\end{itemize}
Moreover, for $1 \le \mu \le 4$ we have
$2 - \mu \in [-2, 1]$ and
$\beta\left(1-\tfrac{1}{\mu}\right) \ge 0.$
Clearly (see Figure~\ref{SuPeRfIgUrE}) one has:
\begin{itemize}
\item $2 - \mu = \pm 1$ if and only if $\mu = 2 \mp 1,$ and 
      $\abs{2 - \mu} < 1$ if and only if $1 < \mu < 3;$
\item $\beta\left(1-\frac{1}{\mu}\right) = 1$ if and only if 
      $\mu = \tfrac{\beta}{\beta - 1},$ and 
      $0 \le \beta\left(1-\frac{1}{\mu}\right) < 1$ if and only if 
      $1 \le \mu < \tfrac{\beta}{\beta-1};$
\item $1 < \tfrac{\beta}{\beta-1} < 3$ for every $\beta \in [2.5,5].$
\end{itemize}
Then the lemma follows from the Hartman-Grobman Theorem.
\end{proof}

\begin{lemma}\label{lem:P3}
The point
$P_3^* = \left(\beta^{-1}, 1-\beta^{-1}-\mu^{-1}, 0\right)$
is a boundary fixed point of the system~\eqref{eq:sistema} for all
positive parameters such that
$\mu^{-1} + \beta^{-1} \leq 1.$ 
In particular, for all the parameters in $\Q$, the fixed point 
$P_3^*$ is \emph{non-hyperbolic} if and only if:
\begin{itemize}
\item $\mu = \tfrac{\beta}{\beta-1},$ that is, when $P_3^* = P_2^*$;
\item $\mu = \tfrac{\beta\gamma}{(\beta-1)\gamma - \beta}$, that is,
      when $P_3^* = P_4^*$;
\item $\mu = \frac{\beta}{\beta-2}.$
\end{itemize}
The region in $\Q$ where $P_3^*$ is \emph{hyperbolic} is divided into
the following four layers:
\begin{labeledlist}{$2\beta\left(\beta-1 - \sqrt{\beta(\beta-2)}\right) < \mu < \tfrac{\beta\gamma}{(\beta - 1)\gamma - \beta}$}
\item[$\tfrac{\beta}{\beta-1} < \mu \le 2\beta\left(\beta-1 - \sqrt{\beta(\beta-2)}\right)$]
   $P_3^*$ is a locally asymptotically stable sink node. Here preys $x$
   and predators $y$ achieve a static equilibrium.
\item[$2\beta\left(\beta-1 - \sqrt{\beta(\beta-2)}\right) < \mu < \tfrac{\beta\gamma}{(\beta - 1)\gamma - \beta}$]
   $P_3^*$ is a locally asymptotically stable spiral-node sink. 
   Here preys $x$ and predators $y$ achieve also a static equilibrium,
   reached via damped oscillations.
\item[$\tfrac{\beta\gamma}{(\beta - 1)\gamma - \beta} < \mu < \min\left\{4, \frac{\beta}{\beta-2}\right\}$]
   $P_3^*$ is an unstable spiral-sink node-source.
\item[$\min\left\{4, \frac{\beta}{\beta-2}\right\}<\mu\le 4$]
   $P_3^*$ is an unstable spiral-node source.
\end{labeledlist}\smallskip
From the previous calculations one has that the fixed points
$P_{2,3}^*$ undergo a transcritical bifurcation at 
$\mu = \tfrac{\beta}{\beta-1}$ (see also Lemma~\ref{lem:P2}) and 
$\mu = \tfrac{\beta\gamma}{(\beta - 1)\gamma - \beta}$,
respectively\footnote{In terms of inverses of the parameters, these identities read 
$\mu^{-1}+\beta^{-1} = 1$ and 
$\mu^{-1}+\beta^{-1}+\gamma^{-1} = 1.$}.
\end{lemma}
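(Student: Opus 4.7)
The plan is to analyze $J(P_3^*)$ obtained by substituting $P_3^* = (\beta^{-1},\, 1 - \beta^{-1} - \mu^{-1},\, 0)$ into the general formula for the Jacobian given in the excerpt. Because the third coordinate of $P_3^*$ is $0$, the entries $J_{3,1}$ and $J_{3,2}$ of $J(P_3^*)$ both vanish, so $J(P_3^*)$ is block upper triangular: a $2\times 2$ block $A$ on the $(x,y)$-coordinates and a $1\times 1$ block on the $z$-coordinate. This splits the spectral analysis cleanly into two pieces, and the eigenvectors of the two blocks give the invariant directions needed to interpret each layer.

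From the $1\times 1$ block one immediately reads $\lambda_{3,3} = \gamma(1 - \beta^{-1} - \mu^{-1})$; since $P_3^*\in\RD$ forces $1 - \beta^{-1} - \mu^{-1}\ge 0$, we have $\lambda_{3,3}\ge 0$, and $\lambda_{3,3} < 1$ is equivalent to $\mu^{-1}+\beta^{-1}+\gamma^{-1} > 1$, i.e.\ to $\mu < \tfrac{\beta\gamma}{(\beta-1)\gamma-\beta}$ --- equality here is exactly the collision $P_3^* = P_4^*$ from Lemma~\ref{lem:existence:fixpoints}. A short substitution yields
\[
A = \begin{pmatrix}
  1 - \tfrac{\mu}{\beta} & -\tfrac{\mu}{\beta} \\
  \beta-1-\tfrac{\beta}{\mu} & 1
\end{pmatrix},
\quad
\mathrm{tr}(A) = 2 - \tfrac{\mu}{\beta},
\quad
\det(A) = \tfrac{\mu(\beta-2)}{\beta},
\]
so that $\lambda_{3,1},\lambda_{3,2}$ are the roots of $\lambda^2 - \bigl(2-\tfrac{\mu}{\beta}\bigr)\lambda + \tfrac{\mu(\beta-2)}{\beta} = 0$. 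The discriminant vanishes precisely when $\mu^2 - 4\beta(\beta-1)\mu + 4\beta^2 = 0$, whose only root in $\Q$ is $\mu = 2\beta\bigl(\beta-1 - \sqrt{\beta(\beta-2)}\bigr)$: below it the eigenvalues are real (node regime), above it they are complex conjugates of modulus $\sqrt{\det(A)}$ (spiral regime).

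For stability of the $2\times 2$ block I would invoke the Schur--Jury criterion $|\det(A)|<1$ together with $|\mathrm{tr}(A)|<1+\det(A)$. Using $\beta\in[2.5,5]$ and $\mu\le 4$, the trace inequality reduces to the pair $\mu > \tfrac{\beta}{\beta-1}$ (the existence condition from Lemma~\ref{lem:existence:fixpoints}) and $\mu < \tfrac{3\beta}{3-\beta}$ (automatic on $\Q$), while $|\det(A)|<1$ reduces to $\mu < \tfrac{\beta}{\beta-2}$; the boundary $\det(A)=1$ forces modulus-one complex conjugate eigenvalues (a direct check shows $|\mathrm{tr}(A)| < 2$ there for $\beta\in[2.5,5]$), which is the third non-hyperbolic case and the source of a Neimark--Sacker-type transition.

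Combining the $\lambda_{3,3}$ analysis with that of $A$ and applying the Hartman--Grobman theorem yields the four hyperbolic layers exactly as stated: real-and-contracting in $(x,y)$ gives the sink node; complex-and-contracting gives the spiral sink; the same with $|\lambda_{3,3}|>1$ gives the ``unstable spiral-sink node-source''; and complex-and-expanding in $(x,y)$ (which forces $\mu>\tfrac{\beta}{\beta-2}$) together with $|\lambda_{3,3}|>1$ gives the spiral-node source. The main obstacle will not be the eigenvalue algebra itself but the bookkeeping between four thresholds in $(\mu,\beta,\gamma)$ whose relative orderings depend on $\beta$ --- in particular verifying that $\tfrac{\beta\gamma}{(\beta-1)\gamma-\beta} < \tfrac{\beta}{\beta-2}$ throughout $\Q$ (which, after cross-multiplication, reduces to $\gamma\ge\beta$, true on $\Q$) and accounting for the regime $\tfrac{\beta}{\beta-2}>4$ (occurring for $\beta$ near $2.5$) in which the outermost layer collapses, so that the $\min\{4,\tfrac{\beta}{\beta-2}\}$ in the statement is necessary.
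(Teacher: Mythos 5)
Your overall strategy is the paper's own: exploit the block-triangular structure of $J(P_3^*)$ (the $z$-eigenvalue $\gamma\left(1-\beta^{-1}-\mu^{-1}\right)$ decouples from a $2\times 2$ block $A$ in the $(x,y)$-directions), locate the node/spiral transition at the vanishing of the discriminant, i.e.\ at $\mu=2\beta\bigl(\beta-1-\sqrt{\beta(\beta-2)}\bigr)$, and use the modulus $\sqrt{\det A}=\sqrt{\mu(\beta-2)/\beta}$ in the complex regime. Your trace and determinant are correct, and replacing the paper's direct manipulation of the eigenvalue formulas by the Schur--Jury conditions $\abs{\det A}<1$, $\abs{\mathrm{tr}(A)}<1+\det A$ is a mild but genuine streamlining: it delivers the stability window $\tfrac{\beta}{\beta-1}<\mu<\tfrac{\beta}{\beta-2}$ for the $2\times 2$ block in one stroke, where the paper argues separately in the real and complex cases; your identification of the three non-hyperbolicity surfaces also matches.

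The gap is in what you defer as ``bookkeeping.'' The four layers are correctly ordered only because, for all $(\beta,\gamma)\in[2.5,5]\times[5,9.4]$,
\[
\frac{\beta}{\beta-1}\;<\;2\beta\bigl(\beta-1-\sqrt{\beta(\beta-2)}\bigr)\;<\;\frac{\beta\gamma}{(\beta-1)\gamma-\beta}\;\le\;\frac{\beta}{\beta-2},
\]
and you only verify the last inequality (the reduction to $\gamma\ge\beta$). The middle inequality is essential --- without it the node/spiral threshold could sit above the surface where $\gamma\left(1-\beta^{-1}-\mu^{-1}\right)$ crosses $1$, in which case part of your first ``locally asymptotically stable sink node'' layer would in fact be a saddle --- and it is not automatic: the paper proves it by bounding $\gamma/((\beta-1)\gamma-\beta)$ from below at $\gamma=9.4$ (using monotonicity in $\gamma$) and then reducing to the positivity of $840\beta^2-940\beta-2209$ for $\beta\ge 2.5$, and this computation is the bulk of the written proof. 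Likewise the first inequality (needed so that the eigenvalues of $A$ really are real, hence a node, on the leftmost layer) requires the estimate $\sqrt{\beta(\beta-2)}<(\beta-1)-\tfrac{1}{2(\beta-1)}$, which you do not address. Finally, your claim that $2\beta\bigl(\beta-1-\sqrt{\beta(\beta-2)}\bigr)$ is the \emph{only} root of the discriminant relevant in $\Q$ needs the one-line check that the other root $2\beta\bigl(\beta-1+\sqrt{\beta(\beta-2)}\bigr)$ exceeds $4$ there. With these three estimates supplied, your argument closes.
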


\begin{proof}[Proof of Lemma~\ref{lem:P3}]
The Jacobian matrix of system~\eqref{eq:sistema} at $P_3^*$ is
\[
J(P_3^*) = \begin{pmatrix}
     1 - \tfrac{\mu}{\beta}                  & -\tfrac{\mu}{\beta} & -\tfrac{\mu}{\beta} \\
     \beta\left(1 - \tfrac{1}{\mu}\right) -1 &                   1 & \beta\left(\tfrac{1}{\mu} - 1\right)+1 \\
     0                                       &                   0 & \gamma\left(1 - \tfrac{1}{\beta} - \tfrac{1}{\mu}\right)
\end{pmatrix},
\]
and has eigenvalues
\begin{align*}
 \lambda_{3,1} &= \gamma\left(1 - \tfrac{1}{\beta} - \tfrac{1}{\mu}\right),\text{ and}\\
 \lambda_{3,2},\lambda_{3,3} &= 1 - \frac{\mu}{2\beta} \pm \frac{\sqrt{\left(\beta + \tfrac{\mu}{2}\right)^2 - \beta^2\mu}}{\beta}.
\end{align*}
For $\tfrac{\beta}{\beta-1} \le \mu \le 4$ we have
  $\gamma\left(1 - \tfrac{1}{\beta} - \tfrac{1}{\mu}\right) \ge 0$ and
  $\gamma\left(1 - \tfrac{1}{\beta} - \tfrac{1}{\mu}\right) = 1$ if and only if
  $\mu = \tfrac{\beta\gamma}{(\beta - 1)\gamma - \beta}$
(see Figure~\ref{SuPeRfIgUrE}).
On the other hand,
$\left(\beta + \tfrac{\mu}{2}\right)^2 - \beta^2\mu = 0$
if and only if
$\mu = 2\beta\left(\beta - 1 \pm \sqrt{\beta(\beta-2)}\right).$

Now let us study the relation between
  $\tfrac{\beta}{\beta-1},$
  $2\beta\left(\beta-1 \pm \sqrt{\beta(\beta-2)}\right),$
  $\tfrac{\beta\gamma}{(\beta - 1)\gamma - \beta}$ and
  $\frac{\beta}{\beta-2}.$
First we observe that, since $\beta \ge 2.5,$
\[
   2\beta\left(\beta - 1 + \sqrt{\beta(\beta-2)}\right) \ge 5(1.5 + \sqrt{1.25}) > 13 > 4 \ge \mu.
\]
Consequently, we simultaneously have
$\left(\beta + \tfrac{\mu}{2}\right)^2 - \beta^2\mu = 0$ and $\mu \le 4$
if and only if
$\mu = 2\beta\left(\beta - 1 - \sqrt{\beta(\beta-2)}\right).$

Second, since $\beta(\beta-2) = (\beta-1)^2 - 1,$ it follows that
\[
 \beta(\beta-2) < (\beta-1)^2 - 1 + \frac{1}{4(\beta-1)^2} =
                  \left((\beta-1) - \tfrac{1}{2(\beta-1)}\right)^2.
\]
Moreover,
  $\beta(\beta-2) > 0$ and
  $(\beta-1) - \tfrac{1}{2(\beta-1)} > 0$
     (which follows from the inequality $2(\beta-1)^2  > 1$).
So, the above inequality is equivalent to
\[
 \sqrt{\beta(\beta-2)} < (\beta-1) - \frac{1}{2(\beta-1)}
 \qquad\Longleftrightarrow\qquad
 \frac{1}{2(\beta-1)} < (\beta-1) - \sqrt{\beta(\beta-2)}
\]
which, in turn, is equivalent to
\[
 \frac{\beta}{\beta-1} < 2\beta\left(\beta-1 - \sqrt{\beta(\beta-2)}\right).
\]
Third, we will show that
\begin{equation}\label{eq:UFFFFF}
 2\beta\left(\beta-1 - \sqrt{\beta(\beta-2)}\right) < \frac{\beta\gamma}{(\beta - 1)\gamma - \beta}.
\end{equation}
To this end observe that
\begin{equation}\label{eq:TheDerivative}
 \frac{\partial}{\partial\gamma} \frac{\gamma}{(\beta - 1)\gamma - \beta} =
 -\frac{\beta}{((\beta - 1)\gamma - \beta)^2} < 0.
\end{equation}
Hence, by replacing $9.4$ by $\tfrac{47}{5},$
\[
 \frac{47}{42\beta - 47} = \frac{\tfrac{47}{5}}{\tfrac{47(\beta - 1)}{5} - \beta} \le \frac{\gamma}{(\beta - 1)\gamma - \beta}.
\]
So, to prove \eqref{eq:UFFFFF}, it is enough to show that
\[
    \beta-1 - \sqrt{\beta(\beta-2)} <
    \frac{47}{2(42\beta - 47)} \le
    \frac{1}{2\beta}\frac{\beta\gamma}{(\beta - 1)\gamma - \beta}.
\]
This inequality is equivalent to
\[
 \frac{84\beta^2 - 178\beta + 47}{84\beta - 94} =
 \beta-1 - \frac{47}{84\beta - 94} <
 \sqrt{\beta(\beta-2)}.
\]
Since $\beta \ge 2.5,$
 $\tfrac{84\beta^2 - 178\beta + 47}{84\beta - 94}$ is positive
and thus, it is enough to prove that
\[
 \frac{(84\beta^2 - 178\beta + 47)^2}{(84\beta - 94)^2} < \beta(\beta-2),
\]
which is equivalent to
\[
 0 < \beta(\beta-2)(84\beta - 94)^2 - (84\beta^2 - 178\beta + 47)^2 =
     840\beta^2-940\beta-2209.
\]
This last polynomial is positive for every
\[ \beta > \frac{47\sqrt{235} + 235}{420} \approx 2.27499\cdots\;. \]
This ends the proof of \eqref{eq:UFFFFF}.
On the other hand, since $\gamma \ge 5 \ge \beta,$ we have
$-\beta\gamma \le -\beta^2$ which is equivalent to
\[
  \beta\gamma(\beta-2) = \beta^2\gamma - 2\beta\gamma \le
  \beta^2\gamma - \beta\gamma - \beta^2 = \beta((\beta - 1)\gamma - \beta),
\]
and this last inequality is equivalent to
\[
  \frac{\beta\gamma}{(\beta - 1)\gamma - \beta} \le \frac{\beta}{\beta-2}
\]
(with equality only when $\gamma = 5 = \beta$).
Thus, summarizing, we have seen:
\begin{equation}\label{eq:ordering}
 1 < \frac{\beta}{\beta-1} <
 2\beta\left(\beta-1 - \sqrt{\beta(\beta-2)}\right) <
 \frac{\beta\gamma}{(\beta - 1)\gamma - \beta} \le \frac{\beta}{\beta-2}
\end{equation}
and the last inequality is an equality only when $\gamma = 5 = \beta.$

Next we study the modulus of the eigenvalues to determine the local
stability of $P_3^*.$
First observe (see Figure~\ref{SuPeRfIgUrE}) that
$\abs{\lambda_{3,1}} = \abs{\gamma\left(1 - \tfrac{1}{\beta} - \tfrac{1}{\mu}\right)} < 1$
if and only if
$\tfrac{\beta}{\beta-1} < \mu < \tfrac{\beta\gamma}{(\beta - 1)\gamma - \beta}.$
On the other hand, on
$\tfrac{\beta}{\beta-1} < \mu \le 2\beta\left(\beta-1 - \sqrt{\beta(\beta-2)}\right),$
the discriminant
$\left(\beta + \tfrac{\mu}{2}\right)^2 - \beta^2\mu$ is non-negative
and the eigenvalues $\lambda_{3,2}$ and $\lambda_{3,3}$ are real.
Moreover, $\beta > 2$ is equivalent to
$-\beta\mu > \beta\mu - \beta^2\mu$ and this to
\[
 \left(\beta - \tfrac{\mu}{2}\right)^2 > \left(\beta + \tfrac{\mu}{2}\right)^2 - \beta^2\mu
 \quad\Leftrightarrow\quad
 \beta - \frac{\mu}{2} > \sqrt{\left(\beta + \tfrac{\mu}{2}\right)^2 - \beta^2\mu}
\]
(observe that $\beta - \tfrac{\mu}{2} > 0$
because $\beta > 2$ and $\mu \le 4,$ and recall that
$\left(\beta + \tfrac{\mu}{2}\right)^2 - \beta^2\mu$
is non-negative in the selected region).
The last inequality above is equivalent to
\[
 1 - \frac{\mu}{2\beta} > \frac{\sqrt{\left(\beta + \tfrac{\mu}{2}\right)^2 - \beta^2\mu}}{\beta}
 \quad\Leftrightarrow\quad
 0 < 1 - \frac{\mu}{2\beta} - \frac{\sqrt{\left(\beta + \tfrac{\mu}{2}\right)^2 - \beta^2\mu}}{\beta} = \lambda_{3,3}.
\]
On the other hand, the following equivalent expressions hold:
\begin{eqnarray*}
&&\frac{\beta}{\beta-1} < \mu \Leftrightarrow \beta^2 < \mu\beta(\beta-1) \Leftrightarrow \left(\beta + \tfrac{\mu}{2}\right)^2 - \beta^2\mu =
 \beta^2 + \frac{\mu^2}{4} + \mu\beta - \mu\beta^2 <
 \frac{\mu^2}{4} \\
&&\frac{\sqrt{\left(\beta + \tfrac{\mu}{2}\right)^2 - \beta^2\mu}}{\beta} < \frac{\mu}{2\beta}
 \Leftrightarrow
 \lambda_{3,2} = 1 - \frac{\mu}{2\beta} + \frac{\sqrt{\left(\beta + \tfrac{\mu}{2}\right)^2 - \beta^2\mu}}{\beta} < 1.   
\end{eqnarray*}
Summarizing, when
$\tfrac{\beta}{\beta-1} < \mu \le 2\beta\left(\beta-1 - \sqrt{\beta(\beta-2)}\right)$
we have
\[
  0 < \lambda_{3,3} =
  1 - \frac{\mu}{2\beta} - \frac{\sqrt{\left(\beta + \tfrac{\mu}{2}\right)^2 - \beta^2\mu}}{\beta} \le
  1 - \frac{\mu}{2\beta} + \frac{\sqrt{\left(\beta + \tfrac{\mu}{2}\right)^2 - \beta^2\mu}}{\beta} =
  \lambda_{3,2} < 1.
\]
Consequently, $P_3^*$ is a locally asymptotically stable sink node by \eqref{eq:ordering}, 
meaning that top predators ($z$) go to extinction and the other two species persist.

Now we consider the region
$2\beta\left(\beta-1 - \sqrt{\beta(\beta-2)}\right) < \mu \le 4.$
In this case the discriminant
$\left(\beta + \tfrac{\mu}{2}\right)^2 - \beta^2\mu$ is negative
and the eigenvalues $\lambda_{3,2}$ and $\lambda_{3,3}$ are complex
conjugate with modulus
\[
 \sqrt{\left(1 - \tfrac{\mu}{2\beta}\right)^2 + \frac{\beta^2\mu - \left(\beta + \tfrac{\mu}{2}\right)^2}{\beta^2}} =
 \sqrt{\frac{\mu(\beta-2)}{\beta}}.
\]
Clearly
\[
 \abs{\sqrt{\frac{\mu(\beta-2)}{\beta}}} < 1
 \quad\Leftrightarrow\quad
2\beta\left(\beta-1 - \sqrt{\beta(\beta-2)}\right) \le \mu \le \min\left\{4, \frac{\beta}{\beta-2}\right\}
\]
(with equality only when $\gamma = 5 = \beta$).
Then the lemma follows from the Hartman-Grobman Theorem.
\end{proof}

\begin{lemma}\label{lem:P4}
The point
$P_4^* = \left(\rho, \gamma^{-1}, \rho - \beta^{-1}\right)$
with 
$\rho = \tfrac{1}{2}\left(1+\beta^{-1}- \gamma^{-1}-\mu^{-1}\right)$
is a fixed point of the system~\eqref{eq:sistema} for all
positive parameters satisfying that 
$\mu^{-1} + \beta^{-1} + \gamma^{-1} \leq 1.$
Moreover, for all the parameters in $\Q$, there exists a function
$
 \psi_4 \colon [2.5,5] \times [5, 9.4] \longrightarrow
 \left[\tfrac{\beta\gamma}{(\beta - 1)\gamma - \beta}, 4\right)
$
(whose graph $\Sigma_4$ is drawn in 
\textcolor{red!80!green}{redish}  colour in Figure~\ref{SuPeRfIgUrE})
such that $P_4^*$ is \emph{non-hyperbolic} if and only if:
\begin{itemize}
\item $\mu = \tfrac{\beta\gamma}{(\beta - 1)\gamma - \beta}$, that is, 
      when $P_4^* = P_3^*$;
\item $\mu = \psi_4(\beta, \gamma).$ 
\end{itemize}
Furthermore, the region of the parameter's cuboid where $P_4^*$ is 
\emph{hyperbolic} is divided into the following two layers:
\begin{labeledlist}{$\tfrac{\beta\gamma}{(\beta - 1)\gamma - \beta} < \mu < \psi_4(\beta,\gamma)$}
\item[$\tfrac{\beta\gamma}{(\beta - 1)\gamma - \beta} < \mu < \psi_4(\beta,\gamma)$]
     $P_4^*$ is a locally asymptotically stable sink of spiral-node type. 
     Within this first layer the three species achieve a static coexistence 
     equilibrium with an oscillatory transient.
\item[$\psi_4(\beta,\gamma) < \mu \le 4$]
     $P_4^*$ is an unstable spiral-source node-sink.
\end{labeledlist}
\end{lemma}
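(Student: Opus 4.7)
The plan mirrors the approach used for $P_2^*$ and $P_3^*$, but now the Jacobian at $P_4^*$ has no block-triangular structure, so the eigenvalues cannot be obtained in closed form. First I would substitute $(x,y,z)=(\rho,\gamma^{-1},\rho-\beta^{-1})$ into $J(x,y,z)$ and simplify with the fixed-point identities $\mu(1-x-y-z)=1$, $\beta(x-z)=1$, $\gamma y=1$. With the shorthand $a:=\mu\rho$, $b:=\beta/\gamma$, $c:=\gamma(\rho-\beta^{-1})$ (all strictly positive in the interior of $\RD$, with $c=0$ precisely when $P_4^*=P_3^*$), the Jacobian and characteristic polynomial read
\[
  J(P_4^*)=\begin{pmatrix} 1-a & -a & -a\\ b & 1 & -b\\ 0 & c & 1\end{pmatrix},\qquad
  p(\lambda)=(\lambda-1)^3+a(\lambda-1)^2+b(a+c)(\lambda-1)+2abc.
\]
In particular $p(1)=2abc$. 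Writing $p(\lambda)=\lambda^3+a_2\lambda^2+a_1\lambda+a_0$ with $a_2=a-3$, $a_1=3-2a+b(a+c)$ and $a_0=-1+a-b(a+c)+2abc$, and noting that $p$ has real coefficients, the three roots are either all real or form a real root together with a complex conjugate pair; the latter configuration will account for the spiral-node structure claimed in the lemma.

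Next I would apply the Jury (Schur-Cohn) criterion: the four conditions $p(1)>0$, $p(-1)<0$, $|a_0|<1$ and $(1-a_0^{\,2})^2>(a_1-a_0 a_2)^2$ are necessary and sufficient for all roots to lie in the open unit disk. The first is immediate in the interior of $\RD$, and its degeneration $c=0$ recovers the transcritical hypersurface $\mu=\tfrac{\beta\gamma}{(\beta-1)\gamma-\beta}$, where the eigenvalue $1$ appears and $P_4^*=P_3^*$, consistently with Lemma~\ref{lem:P3}. Conditions two and three translate into explicit polynomial inequalities in $(\mu,\beta,\gamma)$ which I would verify on $\Q$ using $\gamma\ge\beta$ (hence $0<b\le 1$) together with the existence constraint $\mu^{-1}+\beta^{-1}+\gamma^{-1}\le 1$. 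The fourth condition encodes the interior non-hyperbolic surface: when it becomes an equality with the other three strict, the cubic has one real root equal to $-a_0$ (inside the open unit disk by condition three) and a pair of complex conjugate roots of modulus $1$, i.e.\ a Neimark-Sacker configuration.

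The hard part is to turn this critical equality into a single-valued graph $\mu=\psi_4(\beta,\gamma)$ on $[2.5,5]\times[5,9.4]$ with range in $\left[\tfrac{\beta\gamma}{(\beta-1)\gamma-\beta},4\right)$, and to verify that the complex pair crosses the unit circle transversally there. For each fixed $(\beta,\gamma)$ I would treat
\[
  D(\mu):=(1-a_0^{\,2})^2-(a_1-a_0 a_2)^2
\]
as a smooth function of $\mu$ on $\left(\tfrac{\beta\gamma}{(\beta-1)\gamma-\beta},4\right]$. At the left endpoint $c=0$ factors $p(\lambda)$ as $(\lambda-1)$ times the quadratic already handled in Lemma~\ref{lem:P3}, and a direct substitution gives $D=a^3 b(1-b)^2(4-2a+ab)>0$ strictly, whereas a direct check at $\mu=4$ over $\Q$ gives $D<0$. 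Strict monotonicity of $D$ in $\mu$ -- the technical core of the argument, expressible as an explicit polynomial inequality in $(\mu,\beta,\gamma)$ over the compact box $\Q$ and most conveniently certified by symbolic or interval-arithmetic computation -- then yields a unique zero $\psi_4(\beta,\gamma)$ in the claimed interval by the intermediate value theorem, smoothness in $(\beta,\gamma)$ by the implicit function theorem, and the transversality of the Neimark-Sacker crossing. The Hartman-Grobman theorem combined with the signs of the Jury expressions obtained above finally delivers the spiral-node sink / spiral-source node-sink classification of the two layers.
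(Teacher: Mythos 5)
Your proposal is essentially correct as a plan, but it takes a genuinely different route from the paper. The paper computes the eigenvalues $\lambda_{4,1},\lambda_{4,2},\lambda_{4,3}$ of $J(P_4^*)$ in closed (Cardano) form, exhibits $(1,-2,1)$ as an eigenvector of eigenvalue $1$ on the surface $\mu=\tfrac{\beta\gamma}{(\beta-1)\gamma-\beta}$, and then establishes everything else --- that $\lambda_{4,1}$ is decreasing in $\mu$ and stays in $(-1,1)$, and that $\lvert\lambda_{4,2}\rvert=\lvert\lambda_{4,3}\rvert$ crosses $1$ exactly once (monotonically, or after an initial dip on the region $\textsf{NM}_4$) --- by explicitly acknowledged numerical observation. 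You instead work with the characteristic polynomial in the shifted form $(\lambda-1)^3+a(\lambda-1)^2+b(a+c)(\lambda-1)+2abc$ (which is correct, as is your factorization $D=a^3b(1-b)^2(4-2a+ab)$ at $c=0$) and invoke the Jury/Schur--Cohn conditions, so that the non-hyperbolicity surfaces become zero sets of explicit rational functions of $(\mu,\beta,\gamma)$ rather than of radicals. This buys something real: the inequalities to be certified are polynomial over the compact box $\Q$, hence amenable to exact symbolic or interval-arithmetic verification, whereas the paper's eigenvalue formulas are much less tractable. Two caveats. First, your assertion that $D(\mu)$ is strictly monotone is exactly the step you defer to computation, and it is not obviously true: the paper's own numerics show that $\lvert\lambda_{4,2}\rvert$ is \emph{not} monotone in $\mu$ on the region $\textsf{NM}_4\subset[2.5,2.596]\times[5,6.497]$, so monotonicity of $D$ (which also mixes in the real eigenvalue through $a_0$) must actually be checked there rather than assumed; if it fails, you would still get existence and uniqueness of $\psi_4$ from the sign change plus a separate argument that $D$ has a single zero, which is the shape of the paper's statement (iii.1). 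Second, your claim $D>0$ ``strictly'' at the left endpoint fails at the corner $(\beta,\gamma)=(5,5)$, where $b=1$ gives $D=0$; this is precisely the degenerate point where the paper notes $\lvert\lambda_{4,2}\rvert=1$ and where $\psi_4$ attains the left endpoint of its stated range, so it must be carved out explicitly. With those two points addressed, your argument reaches the same conclusion with a comparable (but more certifiable) reliance on computation.
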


\begin{proof}
The Jacobian matrix of system~\eqref{eq:sistema} at $P_4^*$ is
\[
 J(P_4^*) = \begin{pmatrix}
    1-\mu\rho             & -\mu\rho                                 & -\mu\rho \\
    \tfrac{\beta}{\gamma} &        1                                 & -\tfrac{\beta}{\gamma}\\
                        0 & \gamma\left(\rho-\tfrac{1}{\beta}\right) & 1
\end{pmatrix}\; .
\]
The matrix $J(P_4^*)$ has eigenvalues
\begin{align*}
 \lambda_{4,1} &:= 1 - \frac{\mu\rho}{3} + 
                   \frac{\sqrt[3]{\alpha}}{3\sqrt[3]{2}\sqrt{\gamma}} + 
                   \frac{\sqrt[3]{2}\left(\mu^2\rho^2\gamma - 
                   3\beta(\mu+\gamma)\rho + 
                   3\gamma\right)}{3\sqrt{\gamma}\sqrt[3]{\alpha}},\\
 \lambda_{4,2}, \lambda_{4,3} &:= 1 - \frac{\mu\rho}{3} - 
                                  \frac{\sqrt[3]{\alpha}}{3\sqrt[3]{16}\sqrt{\gamma}}(1 \mp \sqrt{3}i) - 
                                  \frac{\mu^2\rho^2\gamma - 
                                  3\beta(\mu+\gamma)\rho + 
                                  3\gamma}{3\sqrt[3]{4}\sqrt{\gamma}\sqrt[3]{\alpha}}\left(1 \pm \sqrt{3}i\right),\\
\end{align*}
where
\begin{align*}
\alpha =&\ -2  \gamma^{3/2} \rho^3 \mu^3 -
            45 \gamma^{3/2} \beta \mu \rho^2 +
            9  \mu^2 \rho^2 \beta \sqrt{\gamma} +
            45 \gamma^{3/2} \rho \mu + 
            \sqrt{27\widetilde{\alpha}},\text{ and}\\
\widetilde{\alpha} :=&\ 8( \rho\,\beta-1)
                        \left(\mu^4 \rho^4 + 
                        \frac{71(\rho\beta - 1) \rho^2 \mu^2}{8} + 
                        \frac{(\beta\rho-1)^2}{2}\right)\gamma^3\\
                    &\ -38(\rho\beta-1)\left(\mu^2 \rho^2 -
                        \frac{6(\rho\beta-1)}{19}\right) \rho \beta \mu \gamma^2\\
                    &\ -\mu^2 \rho^2 \beta^2 \left(\mu^2 \rho^2 -12(\rho\beta-1)\right) \gamma +
                        4 \beta^3 \mu^3 \rho^3.
\end{align*}
When $\mu = \tfrac{\beta\gamma}{(\beta - 1)\gamma - \beta}$ we clearly have
\[
 J(P_4^*) = \begin{pmatrix}
    1-\mu\rho             & -\mu\rho & -\mu\rho \\
    \tfrac{\beta}{\gamma} &        1 & -\tfrac{\beta}{\gamma}\\
                        0 &        0 & 1
\end{pmatrix}
\]
and
\[
 \begin{pmatrix}
     1-\mu\rho             & -\mu\rho & -\mu\rho \\
     \tfrac{\beta}{\gamma} &        1 & -\tfrac{\beta}{\gamma}\\
                         0 &        0 & 1
\end{pmatrix} 
\begin{pmatrix} \phantom{+}1 \\ -2 \\ \phantom{+}1 \end{pmatrix} =
\begin{pmatrix} \phantom{+}1 \\ -2 \\ \phantom{+}1 \end{pmatrix}\;.
\]
Thus, for every $(\beta, \gamma) \in [2.5, 5] \times [5, 9.4],$
$\lambda_{4,1} = 1$ when
$\mu = \tfrac{\beta\gamma}{(\beta - 1)\gamma - \beta}.$
Moreover, it can be seen numerically that
for every $(\beta, \gamma) \in [2.5, 5] \times [5, 9.4],$
$\lambda_{4,1}$ is a strictly decreasing function of $\mu$ such that
$\lambda_{4,1} > -1$ when $\mu = 4.$
So, $\lambda_{4,1}$ only breaks the hyperbolicity of $P^*_4$ in
the surface $\mu = \tfrac{\beta\gamma}{(\beta - 1)\gamma - \beta}$
and $\abs{\lambda_{4,1}} < 1$ in the region
\[
   (\beta, \gamma, \mu) \in [2.5, 5] \times [5, 9.4] \times
        \left(\tfrac{\beta\gamma}{(\beta - 1)\gamma - \beta}, 4\right].
\]

Next we need to describe the behaviour of
$\abs{\lambda_{4,2}} = \abs{\lambda_{4,3}}$ as a function of $\mu.$
The following statements have been observed numerically:
\begin{enumerate}[(i)]
\item $\abs{\lambda_{4,2}} = \abs{\lambda_{4,3}} < 1$ for every
      \[ (\beta, \gamma, \mu) \in [2.5, 5] \times [5, 9.4] \times
         \left\{\tfrac{\beta\gamma}{(\beta - 1)\gamma - \beta}\right\}
         \setminus
         \left\{\left(5, 5, \tfrac{\beta\gamma}{(\beta - 1)\gamma - \beta}\right)\right\}
      \] and $\abs{\lambda_{4,2}} = \abs{\lambda_{4,3}} = 1$
      at the point
      $(\beta, \gamma, \mu) = \left(5, 5, \tfrac{\beta\gamma}{(\beta - 1)\gamma - \beta}\right).$
\item $\abs{\lambda_{4,2}} = \abs{\lambda_{4,3}} > 1$ for every
      $(\beta, \gamma, \mu) \in [2.5, 5] \times [5, 9.4] \times \{4\}.$
\end{enumerate} \begin{enumerate}[{(iii.}1)]
\item There exists a Non-Monotonic ($\textsf{NM}$) region
$\textsf{NM}_4 \subset [2.5, 2.59597\cdots] \times [5, 6.49712\cdots]$ 
such that for every $(\beta, \gamma) \in \textsf{NM}_4$
there exists a value
$
  \mu^*(\beta,\gamma) \in \left(\tfrac{\beta\gamma}{(\beta - 1)\gamma - \beta}, 4\right)
$
with the property that $\abs{\lambda_{4,2}} = \abs{\lambda_{4,3}}$
is a strictly decreasing function of the parameter
$
   \mu \in \left[ \tfrac{\beta\gamma}{(\beta - 1)\gamma - \beta}, \mu^*(\beta,\gamma) \right],
$
and a strictly increasing function of $\mu$ for every value of 
$\mu \in \left[ \mu^*(\beta,\gamma), 4 \right].$
In particular, from (i) it follows that 
$\abs{\lambda_{4,2}} = \abs{\lambda_{4,3}} < 1$ holds for every point
$
   (\beta, \gamma, \mu) \in \textsf{NM}_4 \times \left[ 
       \tfrac{\beta\gamma}{(\beta - 1)\gamma - \beta}, \mu^*(\beta,\gamma) 
   \right].
$
\linebreak
\begin{minipage}{0.5\textwidth}
Consequently, for every $(\beta, \gamma) \in \textsf{NM}_4,$
there exists a unique value of the parameter
$\mu = \psi_4(\beta,\gamma) > \mu^*(\beta,\gamma)$ such that
$\abs{\lambda_{4,2}} = \abs{\lambda_{4,3}} = 1$ at the point
$(\beta, \gamma, \psi_4(\beta,\gamma)).$

\medskip

The region $\textsf{NM}_4,$ as shown in the picture at the side,
is delimited by the axes $\beta = 2.5,$ $\gamma = 5$ and, 
approximately, by the curve
$\gamma \approx 19.6981\beta^2 - 115.98\beta + 173.334.$
\end{minipage}\hfill\begin{minipage}{0.37\textwidth}\flushright
\begin{tikzpicture}[font={\tiny}]
\begin{axis}[title=The region $\textsf{NM}_4$,
             xlabel={$\beta$}, xmin=2.5, xmax=2.6,
             ylabel={$\gamma$}, ymin=5, ymax=6.5,
             scale=0.5]
\addplot[name path=funcio, blue, thick, domain=2.5:2.59597] {19.6981*x^2 - 115.98*x + 173.334};
\path[name path=eixx] (axis cs:2.5,5) -- (axis cs:2.59597,5);
\addplot[fill=blue!15] fill between [of=funcio and eixx];
\end{axis}
\end{tikzpicture}
\end{minipage}

\item For every
$
    (\beta, \gamma) \in \left( [2.5, 5] \times [5, 9.4]\right) \setminus \textsf{NM}_4,
$
$\abs{\lambda_{4,2}} = \abs{\lambda_{4,3}}$ is a strictly increasing
function of $\mu.$ In particular, from (i) it follows that there
exists a unique value of
$
   \mu = \psi_4(\beta,\gamma) > \tfrac{\beta\gamma}{(\beta - 1)\gamma - \beta}
$
such that
$\abs{\lambda_{4,2}} = \abs{\lambda_{4,3}} = 1$ at the point
$(\beta, \gamma, \psi_4(\beta,\gamma)).$
\end{enumerate}
Then the lemma follows from the Hartman-Grobman Theorem.
\end{proof}

\section{Local bifurcations: Three dimensional bifurcation diagram}
\label{sec:dim3:bif:diagram}
Due to the mathematical structure of the map \eqref{eq:sistema} and
to the number of parameters one can provide analytical information on
local dynamics within different regions of the chosen parameter
space. That is, to build a three-dimensional bifurcation diagram
displaying the parametric regions involved in the local dynamics of
the fixed points investigated above. These analyses also provide some
clues on the expected global dynamics, that will be addressed
numerically in Section~\ref{LEs}. To understand the local dynamical
picture, the next lemma
relates all the surfaces that play a role in defining the local
structural stability zones in the previous four lemmas.
It justifies the relative positions of these surfaces, shown in
Figure~\ref{SuPeRfIgUrE}.

We define
\[
   \mathsf{H}_4 := \left\{ (\beta,\gamma) \colon \psi_4(\beta,\gamma) \ge 3 \right\}.
\]

\begin{lemma}\label{lem:relations}
The relations between the surfaces defined in
Lemmas~\ref{lem:P1}--\ref{lem:P4} are the following:
\begin{enumerate}[(i)]
\item For every $(\beta, \gamma) \in [2.5, 5] \times [5, 9.4],$
\[
  1 < \frac{\beta}{\beta-1} <
 2\beta\left(\beta-1 - \sqrt{\beta(\beta-2)}\right) <
   \frac{\beta\gamma}{(\beta - 1)\gamma - \beta}.
\]
\end{enumerate}

$\mathsf{H}_4$ is the region contained in
$[2.5, 2.769\cdots] \times [5, 6.068\cdots]$ and delimited
by the axes $\beta = 2.5$ and $\gamma = 5,$ and the curve
\[ \gamma \approx 2.13725\beta^2 - 15.2038\beta + 30.7162. \]

\begin{enumerate}[(i)]\setcounter{enumi}{1}
\item For every $(\beta, \gamma) \in \left( \left[2.5, \tfrac{8}{3}\right] \times [5, 9.4]\right) \cap \mathsf{H}_4,$
\[
\frac{\beta\gamma}{(\beta - 1)\gamma - \beta} < 3 \le \psi_4(\beta,\gamma) < 4 \le \frac{\beta}{\beta-2}.
\]
On the other hand, for every $(\beta, \gamma) \in \left( \left[2.5, \tfrac{8}{3}\right] \times [5, 9.4]\right) \setminus \mathsf{H}_4,$
\[
\frac{\beta\gamma}{(\beta - 1)\gamma - \beta} < \psi_4(\beta,\gamma) < 3 < 4 \le \frac{\beta}{\beta-2}.
\]

\item For every $(\beta, \gamma) \in \left( \left(\tfrac{8}{3}, 3\right) \times [5, 9.4]\right) \cap \mathsf{H}_4,$
\[
\frac{\beta\gamma}{(\beta - 1)\gamma - \beta} < 3 \le \psi_4(\beta,\gamma) < \frac{\beta}{\beta-2} < 4
\]
For every $(\beta, \gamma) \in \left( \left(\tfrac{8}{3}, 3\right) \times [5, 9.4]\right) \setminus \mathsf{H}_4,$
\[
\frac{\beta\gamma}{(\beta - 1)\gamma - \beta} < \psi_4(\beta,\gamma) < 3 < \frac{\beta}{\beta-2} < 4.
\]

\item For every $(\beta, \gamma) \in \{3\} \times [5, 9.4],$
\[
\frac{\beta\gamma}{(\beta - 1)\gamma - \beta} < \psi_4(\beta,\gamma) < 3 = \frac{\beta}{\beta-2}.
\]

\item For every $(\beta, \gamma) \in (3, 5] \times [5, 9.4],$
\[
\frac{\beta\gamma}{(\beta - 1)\gamma - \beta} \le \psi_4(\beta,\gamma) \le \frac{\beta}{\beta-2} < 3.
\]
Moreover, all the above inequalities are strict except in the point
$(\beta, \gamma) = (5,5)$ where
$\frac{\beta\gamma}{(\beta - 1)\gamma - \beta} = \psi_4(\beta,\gamma) = \frac{\beta}{\beta-2}.$
\end{enumerate}
\end{lemma}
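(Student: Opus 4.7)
The plan is to prove (i) by direct citation, justify the description of $\mathsf{H}_4$ from its defining condition, and then handle (ii)--(v) by a uniform case analysis that reduces each claim to pairwise comparisons among the five surfaces
\[
\tfrac{\beta\gamma}{(\beta-1)\gamma - \beta},\quad
\psi_4(\beta,\gamma),\quad
3,\quad
\tfrac{\beta}{\beta-2},\quad
4.
\]
The splitting is dictated by the position of $\tfrac{\beta}{\beta-2}$ relative to the constants $3$ and $4$ (which cuts $\beta$ at $\tfrac{8}{3}$ and $3$) and by membership in $\mathsf{H}_4$ (which decides $\psi_4$ versus $3$).

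Statement (i) is nothing more than the chain of inequalities~\eqref{eq:ordering} established in the proof of Lemma~\ref{lem:P3}, together with the remark there that the last inequality is strict except at $\beta = \gamma = 5.$ For the description of $\mathsf{H}_4,$ I would use the implicit definition of $\psi_4$ from Lemma~\ref{lem:P4}: substituting $\mu = 3$ into the cubic characteristic polynomial of $J(P_4^*)$ and imposing $\abs{\lambda_{4,2}} = 1$ yields a transcendental curve in the $(\beta,\gamma)$-plane, which I would fit numerically to the stated quadratic and intersect with the edges $\gamma = 5$ and $\beta = 2.5$ to recover the endpoints $\beta \approx 2.769$ and $\gamma \approx 6.068.$

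For (ii)--(v) I would sweep through the four $\beta$-intervals in order. In each interval the elementary equivalences $\tfrac{\beta}{\beta-2} \gtrless 4 \Leftrightarrow \beta \lessgtr \tfrac{8}{3}$ and $\tfrac{\beta}{\beta-2} \gtrless 3 \Leftrightarrow \beta \lessgtr 3$ place $\tfrac{\beta}{\beta-2}$ relative to the constants. The inequality $\tfrac{\beta\gamma}{(\beta-1)\gamma - \beta} \le \tfrac{\beta}{\beta-2}$ with equality only at $(5,5)$ is already in~\eqref{eq:ordering}; the bracketing $\tfrac{\beta\gamma}{(\beta-1)\gamma - \beta} \le \psi_4(\beta,\gamma) < 4$ is immediate from Lemma~\ref{lem:P4}; and membership in $\mathsf{H}_4$ sorts $\psi_4$ against $3.$ Cases (ii)--(iv) follow by assembling these pieces; observe in particular that for $\beta \le \tfrac{8}{3}$ the bound $\sigma_3 \ge 4$ is automatic so the position of $\psi_4 < 4$ against $\sigma_3$ is free.

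The main obstacle is (v), which demands the sharpening $\psi_4(\beta,\gamma) \le \tfrac{\beta}{\beta-2}$ on $(3, 5] \times [5, 9.4],$ with equality forced only at $(5,5).$ Because $\psi_4$ is defined only implicitly through $\abs{\lambda_{4,2}} = 1$ on a cubic whose Cardano expressions are those displayed in Lemma~\ref{lem:P4}, I do not expect a clean symbolic derivation. My proposal is to leverage item~(iii.2) of that lemma: the region $(3, 5] \times [5, 9.4]$ is disjoint from $\textsf{NM}_4$ (which lies inside $[2.5, 2.59597] \times [5, 6.49712]$), hence on it $\abs{\lambda_{4,2}}$ is a strictly increasing function of $\mu.$ It therefore suffices to verify numerically that $\abs{\lambda_{4,2}(\beta,\gamma,\tfrac{\beta}{\beta-2})} \ge 1$ throughout this region, with equality only at $(\beta,\gamma) = (5,5)$ where $\tfrac{\beta}{\beta-2}$ collapses onto $\tfrac{\beta\gamma}{(\beta-1)\gamma-\beta}.$ Monotonicity in $\mu$ then translates that modulus inequality into the required bound on $\psi_4,$ and the coincidence at $(5,5)$ recovers the equality clause. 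This numerical step is the analogue of, and of the same nature as, the treatment of $\psi_4$ in Lemma~\ref{lem:P4} itself.
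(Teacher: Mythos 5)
Your proposal follows essentially the same route as the paper's proof: statement (i) is read off from the chain~\eqref{eq:ordering} established in the proof of Lemma~\ref{lem:P3}; the description of $\mathsf{H}_4$ and every comparison involving $\psi_4$ is delegated to numerical verification; and the remaining orderings reduce to the elementary placement of $\tfrac{\beta}{\beta-2}$ against $3$ and $4$ via the cutoffs $\beta=\tfrac{8}{3}$ and $\beta=3.$ Your treatment of (v) is in fact somewhat more principled than the paper's, since you exploit the monotonicity of $\abs{\lambda_{4,2}}$ in $\mu$ outside $\textsf{NM}_4$ to reduce the claim to a single numerical check at $\mu=\tfrac{\beta}{\beta-2}$; the paper simply states that the relations of $\psi_4$ with the three reference surfaces are checked numerically region by region.

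Two assembly steps are missing, however. First, in the $\mathsf{H}_4$ branches of (ii) and (iii) the inequality $\tfrac{\beta\gamma}{(\beta-1)\gamma-\beta}<3$ does not follow from the pieces you list: there $3\le\psi_4$, so the bracketing $\tfrac{\beta\gamma}{(\beta-1)\gamma-\beta}\le\psi_4$ gives nothing, and $\tfrac{\beta}{\beta-2}>3$ (indeed $\ge 4$ in case (ii)), so~\eqref{eq:ordering} gives nothing either. The paper supplies the missing elementary bound: by~\eqref{eq:TheDerivative} the quantity $\tfrac{\gamma}{(\beta-1)\gamma-\beta}$ is decreasing in $\gamma$, hence $\tfrac{\beta\gamma}{(\beta-1)\gamma-\beta}\le\tfrac{5\beta}{4\beta-5}<3$, the last inequality being equivalent to $7\beta>15.$ Second, in case (iii)$\,\cap\,\mathsf{H}_4$ the claim $\psi_4(\beta,\gamma)<\tfrac{\beta}{\beta-2}$ is not ``free'' the way it is for $\beta\le\tfrac{8}{3}$, since there $\tfrac{\beta}{\beta-2}<4$; like (v), this comparison requires its own numerical check, which the paper folds into its blanket numerical verification. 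Both fixes are minor and entirely in the spirit of your argument.
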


\begin{proof}
Statement (i) follows from Equation~\eqref{eq:ordering} and
the fact that $\mathsf{H}_4$ is the region delimited
by the axes $\beta = 2.5$ and $\gamma = 5,$ and the curve
\[ \gamma \approx 2.13725\beta^2 - 15.2038\beta + 30.7162 \]
can be checked numerically.

On the other hand, observe that for every $\gamma \in [5, 9.4]$ 
we have
\[
 \begin{cases}
 \frac{\beta}{\beta-2} \ge 4   & \text{for $\beta \in \left[2.5, \tfrac{8}{3}\right]$,}\\
 4 > \frac{\beta}{\beta-2} > 3 & \text{for $\beta \in \left(\tfrac{8}{3}, 3\right)$,}\\
 \frac{\beta}{\beta-2} = 3     & \text{for $\beta = \tfrac{8}{3}$,}\\
 3 > \frac{\beta}{\beta-2}     & \text{for $\beta \in (3, 5].$}
 \end{cases}
\]
Moreover, $7\beta > 15$ is equivalent to 
$12 \beta - 15 > 5\beta,$ and 
Equation~\eqref{eq:TheDerivative} implies
\[
 3 > \frac{5\beta}{4\beta - 5} =
     \beta\frac{5}{5(\beta - 1) - \beta} \ge
     \beta\frac{\gamma}{(\beta - 1)\gamma - \beta}.
\]

So, Statements~(ii--v) follow from these observations,
Equation~\eqref{eq:ordering} and by checking numerically the various
relations of $\psi_4(\beta,\gamma)$ with
$\mu = \frac{\beta\gamma}{(\beta - 1)\gamma - \beta},$ $\mu = 3,$ and
$\mu = \frac{\beta}{\beta-2}$ for the different regions considered
in Statements~(ii--v).
\end{proof}

\begin{landscape}
\begin{figure}
\centering
\includegraphics[width=22.5cm]{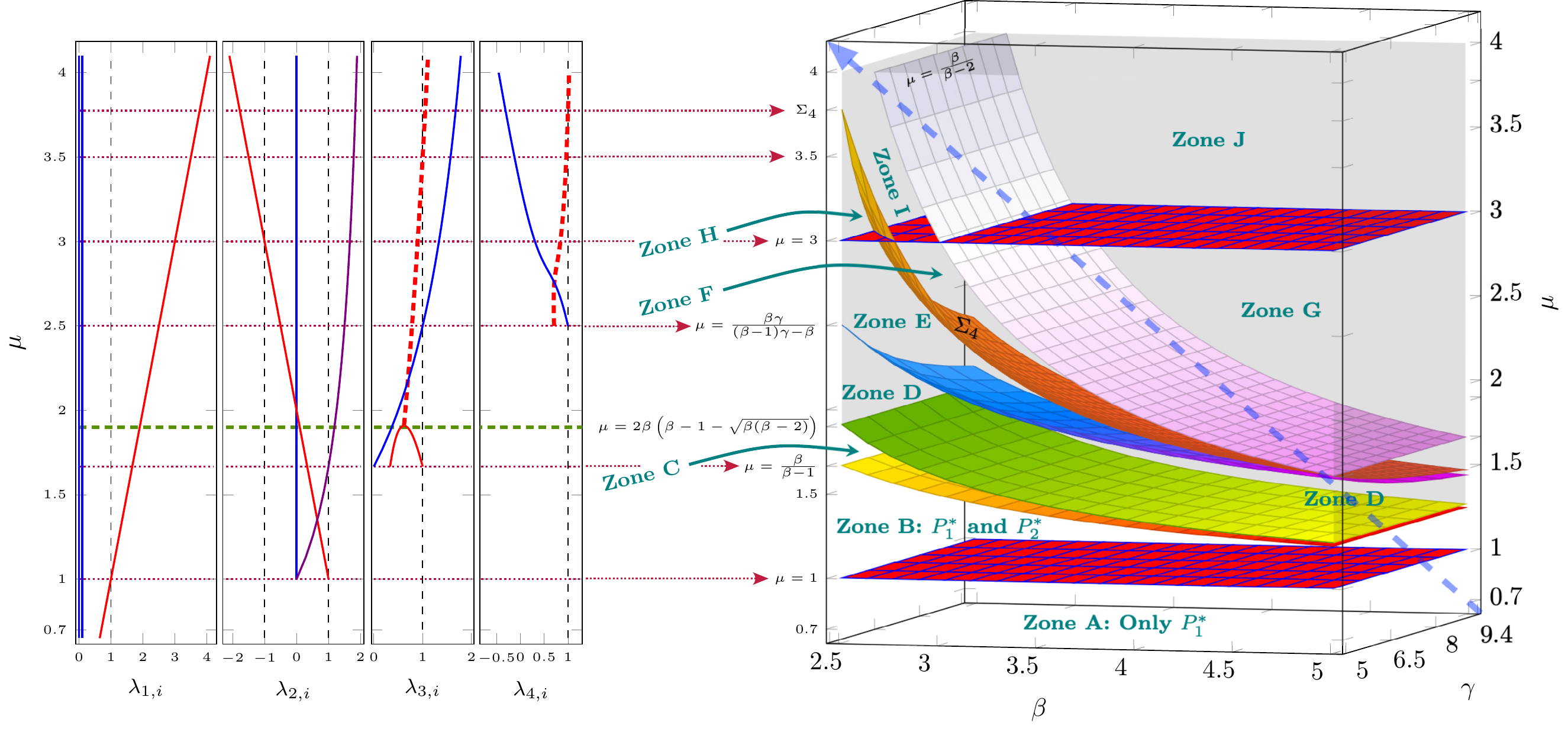}
\vspace*{-8pt}
\captionsetup{width = \linewidth}
\caption{\small 
(Left) Eigenvalues $\lambda_{j,i}$ of the fixed points 
$P^*_j$, with $j = 1,\dots,4$ and $i = 1,\dots,3$ 
(for typical values of $\beta$ and $\gamma$). 
(Right) Zones of local structural stability in the parameter space.
The 
\textcolor{blue}{blue}, 
\textcolor{red!80!green}{redish}, and
\textcolor{violet}{violet} 
surfaces intersect at the unique point $\left(5,5,\tfrac{5}{3}\right).$
For every other value of 
$(\beta, \gamma) \in [2.5,5] \times [5, 9.4]$ 
are pairwise disjoint.
The grey box above the surface 
$\mu = 2\beta\left(\beta-1-\sqrt{\beta(\beta-2)}\right)$ is the region where
the eigenvalues
\textcolor{red}{$\lambda_{3,2}$} and \textcolor{red}{$\lambda_{3,3}$}
are complex.
The eigenvalues 
\textcolor{red}{$\lambda_{4,2}$} and \textcolor{red}{$\lambda_{4,3}$} 
are complex whenever $P^*_4$ is in the positive octant.
The 
\textcolor{red}{red thick dashed lines} represent
\textcolor{red}{$\abs{\lambda_{3,2}} = \abs{\lambda_{3,3}}$} and
\textcolor{red}{$\abs{\lambda_{4,2}} = \abs{\lambda_{4,3}}$}.
In the left pictures the ``complexity region'' of
\textcolor{red}{$\lambda_{3,2}$} and \textcolor{red}{$\lambda_{3,3}$}
corresponds to the values of $\mu$ above the 
\textcolor{verdxarxadiscriminant}{green thick dashed line}.
The dynamics tied to the zones crossed by the thick, dashed, blue
arrow can be displayed in the file \textsf{Movie-3.mp4} in the
Supplementary Material.}\label{SuPeRfIgUrE}
\end{figure}
\end{landscape}

The detailed description of the local dynamics in the zones of
Figure~\ref{SuPeRfIgUrE} (see also Figure~\ref{diag}) is given by the
following (see Lemmas~\ref{lem:P1}--\ref{lem:relations}):

\begin{theorem}\label{theo:UnaCanya}
The following statements hold:
\begin{enumerate}[\bf Zone A:]
\item $(\beta, \gamma, \mu) \in [2.5, 5] \times [5, 9.4] \times (0,1).$\\
In this layer the system has $P_1^* = (0,0,0)$ as a unique
fixed point.
This fixed point is a locally asymptotically stable sink node,
meaning that the three species go to extinction. 
Indeed, it is proved in Theorem~\ref{GAS} that this is a 
globally asymptotically stable (GAS) point.
\item $(\beta, \gamma, \mu) \in [2.5, 5] \times [5, 9.4] \times \left(1, \tfrac{\beta}{\beta-1}\right).$\\
In this zone the system has exactly two fixed points: the origin
$P_1^*$ and 
$P_2^* = \left(1-\tfrac{1}{\mu}, 0, 0\right).$
$P_1^*$ is a saddle with $\mathrm{dim}\,W^u(P_1^*) = 1$ 
locally tangent to the $x$-axis and
$P_2^*$ is a locally asymptotically stable sink node. 
Hence, in this zone only preys will survive.
Theorem~\ref{GAS2} proves that in this zone $P_2^*$ is a GAS point.
\begin{figure}
\centering
\includegraphics[width=\textwidth]{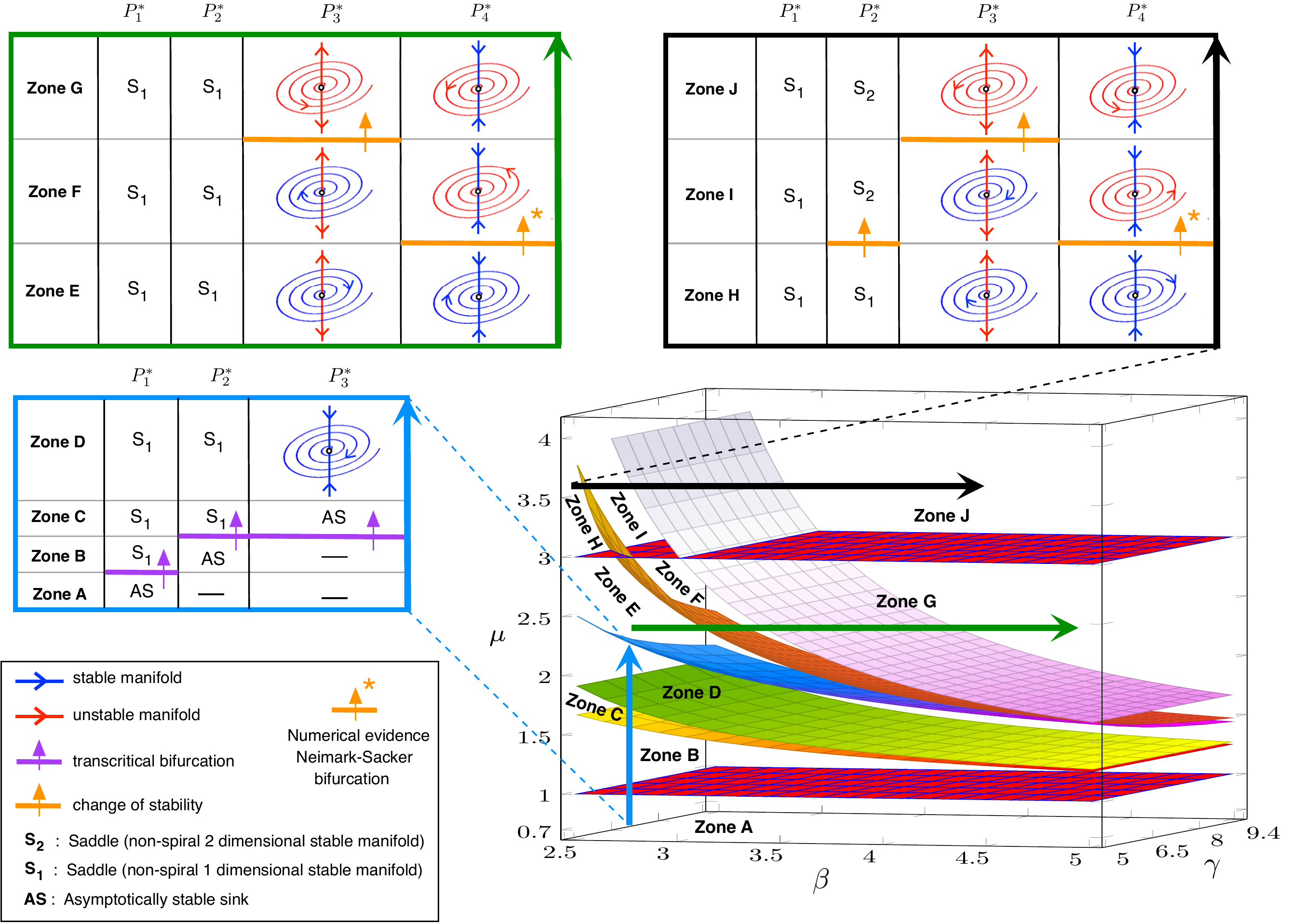}
\captionsetup{width=\linewidth}
\caption{Changes in the existence and local stability of the fixed
points tied to the transitions between the zones identified in
Figure~\ref{SuPeRfIgUrE}. The tables display, for each fixed point, the
stability nature along the thick arrows displayed in the cuboid $\Q.$
The fixed points are classified as follows: asymptotically stable
sink (AS); non-spiral saddle with a 1-dimensional $(S_1)$ and
2-dimensional $(S_2)$ stable manifold; and spirals (stable in blue;
unstable in red), see the legend below the table framed in light
blue. Stable and unstable manifolds are displayed with blue and red
arrows, respectively. The small violet arrows in the lower table
denote transcritical bifurcations, with collision of fixed points and
stability changes. The small orange arrows indicate changes in
stability without collision of fixed points. Here numerical evidences
for supercritical Neimark-Sacker bifurcations have been obtained
(indicated with an asterisk).}\label{diag}
\end{figure}
\item
$
  (\beta, \gamma, \mu) \in [2.5, 5] \times [5, 9.4] \times 
      \left(\tfrac{\beta}{\beta-1}, 2\beta\left(\beta-1 - \sqrt{\beta(\beta-2)}\right)\right).
$\\
In this region the system has exactly three fixed points:
the origin $P_1^*,$ $P_2^*$ and
$P_3^* = \left(\beta^{-1}, 1-\beta^{-1}-\mu^{-1}, 0\right).$
$P_1^*$ and $P_2^*$ are saddles with 
$\mathrm{dim}\,W^u(P_1^*,P_2^*) = 1$ and 
$P_3^*$ is a locally asymptotically stable sink node. 
Here top predators can not survive, being the system only composed of 
preys and the predator species $y.$
\item 
$
  (\beta, \gamma, \mu) \in [2.5, 5] \times [5, 9.4] \times
        \left(2\beta\left(\beta-1 - \sqrt{\beta(\beta-2)}\right),
               \tfrac{\beta\gamma}{(\beta - 1)\gamma - \beta}\right).
$\\
In this zone the system still has three fixed points:
$P_1^*,$ $P_2^*$ and $P_3^*.$
$P_1^*$ and $P_2^*$ are saddles with 
$\mathrm{dim}\,W^u(P_1^*,P_2^*) = 1$ 
but $P_3^*$ is a locally asymptotically stable spiral-node sink. 
In this region the prey and predator $y$ reach a static equilibrium of
coexistence achieved via damped oscillations, while the top predator
$z$ goes to extinction.
\item
$
  (\beta, \gamma, \mu) \in [2.5, 5] \times [5, 9.4] \times
         \left(\tfrac{\beta\gamma}{(\beta - 1)\gamma - \beta}, 
             \min\left\{3,\psi_4(\beta,\gamma)\right\}\right).
$\\
In this layer the system has exactly four fixed points:
the origin $P_1^*,$ $P_2^*,$ $P_3^*$ and
$P_4^* = \left(\rho, \gamma^{-1}, \rho - \beta^{-1}\right).$
$P_1^*$ and $P_2^*$ are saddle points with
$\mathrm{dim}\,W^u(P_1^*,P_2^*) = 1,$ 
the fixed point 
$P_3^*$ is an unstable spiral-sink node-source and
$P_4^*$ is a locally asymptotically stable sink of spiral-node type.
Under this scenario, the three species achieve a static coexistence
state also via damped oscillations.
\item 
$
  (\beta, \gamma, \mu) \in \left(\left([2.5, 5] \times 
        [5,9.4]\right)\setminus \mathsf{H}_4\right) \times
             \left(\psi_4(\beta,\gamma), 
                  \min\left\{3,\frac{\beta}{\beta-2}\right\}\right).
$\\
In this layer the system has exactly four fixed points:
the origin $P_1^*,$ $P_2^*,$ $P_3^*$ and
$P_4^* = \left(\rho, \gamma^{-1}, \rho - \beta^{-1}\right).$ 
The fixed points $P_1^*$ and $P_2^*$ are saddles with 
$\mathrm{dim}\,W^u(P_1^*,P_2^*) = 1,$ 
the point 
$P_3^*$ is an unstable spiral-sink node-source and
$P_4^*$ is an unstable spiral-source node-sink. 
Here, due to the unstable nature of all fixed points, 
fluctuating coexistence of all of the species is found. 
As we will see in Section~\ref{SRGD}, this coexistence can be 
governed by periodic or chaotic fluctuations.
\item 
$
    (\beta, \gamma, \mu) \in (3, 5] \times [5, 9.4] \times
             \left(\frac{\beta}{\beta-2}, 3\right).
$\\
In this zone the system has four fixed points: 
$P_1^*,$ $P_2^*,$ $P_3^*$ and $P_4^*.$
$P_1^*$ and $P_2^*$ are a saddles with
$\mathrm{dim}\,W^u(P_1^*,P_2^*) = 1$, 
$P_3^*$ is an unstable spiral-node source and
$P_4^*$ is an unstable spiral-source node-sink. 
The expected coexistence dynamics here are like those of zone F above.
\item 
$
   (\beta, \gamma, \mu) \in \mathsf{H}_4 \times
          \left(3,\psi_4(\beta,\gamma)\right).
$\\
In this region the system has four fixed points: 
$P_1^*,$ $P_2^*,$ $P_3^*$ and $P_4^*.$
$P_1^*$ and $P_2^*$ are saddles 
$\mathrm{dim}\,W^u(P_1^*,P_2^*) = 1,$ 
$P_3^*$ is an unstable spiral-sink node-source and
$P_4^*$ is a locally asymptotically stable sink of spiral-node type.
The dynamics here are the same as the ones in zone E.
\item 
$
      (\beta, \gamma, \mu) \in (2.5, 3) \times [5, 9.4] \times
             \left(\max\left\{3, \psi_4(\beta,\gamma)\right\}, 
                     \min\left\{4,\frac{\beta}{\beta-2}\right\}\right).
$\\
In this zone the system has four fixed points: 
$P_1^*,$ $P_2^*,$ $P_3^*$ and $P_4^*.$
$P_1^*$ is a saddle with 
$\mathrm{dim}\,W^u(P_1^*) = 1$, 
$P_2^*$ is a saddle with 
$\mathrm{dim}\,W^u(P_2^*) = 2$, 
$P_3^*$ is an unstable spiral-sink node-source and
$P_4^*$ is an unstable spiral-source node-sink. 
Here the dynamics can be also governed by coexistence among 
the three species via oscillations.
\item 
$
  (\beta, \gamma, \mu) \in \left(\tfrac{8}{3}, 5\right] \times [5, 9.4] \times 
     \left(\max\left\{3, \frac{\beta}{\beta-2}\right\}, 4\right).
$\\
In this zone the system has four fixed points: 
$P_1^*,$ $P_2^*,$ $P_3^*$ and $P_4^*.$
The fixed point $P_1^*$ is a saddle with 
$\mathrm{dim}\,W^u(P_1^*) = 1,$ 
the point $P_2^*$ is a saddle with 
$\mathrm{dim}\,W^u(P_2^*) = 2$, 
$P_3^*$ is an unstable spiral-node source and
$P_4^*$ is an unstable spiral-source node-sink. Dynamics here can
also be governed by all-species fluctuations, 
either periodic or chaotic.
\end{enumerate}
\end{theorem}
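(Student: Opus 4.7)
The proof is essentially a systematic catalogue: Theorem~\ref{theo:UnaCanya} aggregates, zone by zone, the existence and stability information that has already been proved in Lemmas~\ref{lem:existence:fixpoints}--\ref{lem:P4}, with the relative ordering of the defining surfaces supplied by Lemma~\ref{lem:relations}. My plan is therefore not to prove new facts but to verify, for each of the ten zones, that the triple (which fixed points exist, what their local types are, whether the zone is non-empty) matches the claims.

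First I would set up the ten defining intervals of $\mu$ and, using Lemma~\ref{lem:existence:fixpoints}, read off which fixed points are biologically meaningful. The list grows monotonically in $\mu$: only $P_1^*$ in Zone~A; $P_1^*,P_2^*$ in Zone~B (since $\mu>1$); $P_1^*,P_2^*,P_3^*$ in Zones~C and~D (since $\mu>\tfrac{\beta}{\beta-1}$); and all four in Zones~E--J (since $\mu>\tfrac{\beta\gamma}{(\beta-1)\gamma-\beta}$). This exhausts the ``which fixed points'' column.

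Second, for each fixed point present in a zone I would apply the corresponding stability lemma, using the value of $\mu$ relative to its critical thresholds. For $P_1^*$ this reduces to comparing $\mu$ with $1$ (Lemma~\ref{lem:P1}); for $P_2^*$, to comparing $\mu$ with $\tfrac{\beta}{\beta-1}$ and with $3$ (Lemma~\ref{lem:P2}), which explains the jump from $\dim W^u(P_2^*)=1$ to $\dim W^u(P_2^*)=2$ between Zones~C--H and Zones~I--J; for $P_3^*$, to locating $\mu$ among $\tfrac{\beta}{\beta-1}<2\beta(\beta-1-\sqrt{\beta(\beta-2)})<\tfrac{\beta\gamma}{(\beta-1)\gamma-\beta}\le\tfrac{\beta}{\beta-2}$ (Lemma~\ref{lem:P3}), giving the four sub-cases sink node / spiral-node sink / spiral-sink node-source / spiral-node source that appear in Zones~C, D, E, F, G, H, I, J respectively; for $P_4^*$, to comparing $\mu$ with the surface $\psi_4(\beta,\gamma)$ (Lemma~\ref{lem:P4}), which distinguishes Zones~E and H (where $P_4^*$ is an asymptotically stable spiral-node sink) from Zones~F, G, I, J (where $P_4^*$ is an unstable spiral-source node-sink).

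Third, and this is the only place that is not pure bookkeeping, I would invoke Lemma~\ref{lem:relations} to confirm that each zone is non-empty and that the ``$\min$'' and ``$\max$'' expressions in its definition really encode the correct ordering of thresholds. For instance, Zone~E's upper bound $\min\{3,\psi_4(\beta,\gamma)\}$ is compatible with $\mu>\tfrac{\beta\gamma}{(\beta-1)\gamma-\beta}$ because parts~(ii)--(v) of Lemma~\ref{lem:relations} guarantee $\tfrac{\beta\gamma}{(\beta-1)\gamma-\beta}<\psi_4(\beta,\gamma)$; the splitting of the region ``$\psi_4<\mu<4$'' into Zones~F, G, I, J according to whether $\beta\le\tfrac{8}{3}$, $\tfrac{8}{3}<\beta<3$, $\beta=3$, or $\beta>3$ matches exactly the four cases of Lemma~\ref{lem:relations}; and Zone~H is non-empty precisely because $\mathsf{H}_4$ is defined as the locus where $\psi_4(\beta,\gamma)\ge 3$.

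The only real obstacle is the sheer case analysis: ten zones, up to four fixed points each, and thresholds whose order depends on the sub-region of $(\beta,\gamma)$. I would organise the verification in a table mirroring Figure~\ref{diag}, with one row per zone, columns for $P_1^*,P_2^*,P_3^*,P_4^*$, and marginal notes citing which part of which lemma is used. Everything else—dimensions of stable/unstable manifolds, tangency to the $x$-axis for $P_1^*$ and $P_2^*$, the spiral character of $P_3^*$ above $\mu=2\beta(\beta-1-\sqrt{\beta(\beta-2)})$, the complex nature of $\lambda_{4,2},\lambda_{4,3}$—has already been established in the four stability lemmas and needs only to be quoted. No new computation is required.
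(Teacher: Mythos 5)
Your proposal takes exactly the route the paper does: Theorem~\ref{theo:UnaCanya} is presented as a direct consequence of Lemmas~\ref{lem:existence:fixpoints}--\ref{lem:relations} (the paper offers no argument beyond the parenthetical ``see Lemmas~\ref{lem:P1}--\ref{lem:relations}''), so your zone-by-zone bookkeeping --- existence from Lemma~\ref{lem:existence:fixpoints}, local type from Lemmas~\ref{lem:P1}--\ref{lem:P4}, non-emptiness and threshold ordering from Lemma~\ref{lem:relations} --- is precisely the intended proof.

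One detail deserves attention before you declare the table closed. You assert that comparing $\mu$ with $3$ ``explains the jump from $\dim W^u(P_2^*)=1$ to $\dim W^u(P_2^*)=2$ between Zones C--H and Zones I--J.'' But Zone~H has $\mu\in\bigl(3,\psi_4(\beta,\gamma)\bigr)$, i.e.\ $\mu>3$, so the criterion you correctly extracted from Lemma~\ref{lem:P2} gives $\dim W^u(P_2^*)=2$ there (both nonzero eigenvalues $2-\mu$ and $\beta(1-\mu^{-1})$ have modulus greater than $1$ when $\mu>3>\tfrac{\beta}{\beta-1}$), not $1$ as your grouping --- and, indeed, the theorem's own Zone~H entry --- claims. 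A faithful execution of your table would surface this discrepancy rather than absorb Zone~H into the $\mu<3$ block, and it should be flagged as a correction to the statement rather than silently reproduced. A smaller point of the same kind: your ``respectively'' pairing four types of $P_3^*$ with eight zones does not parse; the correct assignment is C $\to$ sink node, D $\to$ spiral-node sink, E, F, H, I $\to$ spiral-sink node-source, and G, J $\to$ spiral-node source. With these two repairs your plan is complete and requires no computation beyond what the cited lemmas already provide.
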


Figure~\ref{diag} provides a summary of the changes in the existence
and local stability of the fixed points for each one of the zones identified.
Also, we provide an animation of the dynamical outcomes tied to
crossing the cuboid following the direction of the 
dashed thick blue arrow represented in Figure~\ref{SuPeRfIgUrE}.
Specifically, the file \textsf{Movie-3.mp4} in the Supplementary
Material displays the dynamics along this line for variable $x_n$ 
(as a function of the three running parameters labelled factor),
as well as in the phase space $(x, y)$ and $(x, y, z).$

\section{Some remarks on global dynamics}\label{SRGD}

In this section we study the global dynamics in Zones~A and B from
the preceding section.

\begin{theorem}[Global dynamics in Zone A]\label{GAS}
Assume that $\mu < 1$ and let $(x,y,z)$ be a point from $\SD.$
Then,
\[ \lim_{n\to\infty} T^n(x,y,z) = (0,0,0) = P_1^*. \]
\end{theorem}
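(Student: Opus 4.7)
The plan is to show coordinate by coordinate, in the order $x_n \to 0,$ then $y_n \to 0,$ then $z_n \to 0,$ that every orbit starting in $\SD$ converges to the origin. The key structural fact I would use throughout is that $\SD$ is $T$-invariant, so for $(x_0,y_0,z_0)\in\SD$ and $(x_n,y_n,z_n):=T^n(x_0,y_0,z_0),$ we have $(x_n,y_n,z_n)\in\U$ for every $n\ge 0;$ in particular $x_n,y_n,z_n\ge 0$ and $1-x_n-y_n-z_n\in[0,1].$

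For the $x$-coordinate I would use this bound to observe that
\[
   0 \le x_{n+1} = \mu x_n(1-x_n-y_n-z_n) \le \mu x_n,
\]
which iterates to $0\le x_n\le \mu^n x_0\to 0$ because $\mu<1.$ For the $y$-coordinate I would treat the trivial case $y_n=0$ separately (then $y_{n+1}=0$) and, for $y_n>0,$ observe that $y_{n+1}=\beta y_n(x_n-z_n)\ge 0$ together with $\SD$-invariance forces $x_n\ge z_n,$ hence
\[
   0 \le y_{n+1} \le \beta y_n x_n \le \beta \mu^n x_0\, y_n.
\]
Iterating this inequality produces the super-geometric bound
\[
   y_n \le (\beta x_0)^n\, \mu^{n(n-1)/2}\, y_0,
\]
which tends to zero regardless of how large $\beta x_0$ might be, because $\mu^{n(n-1)/2}$ dominates any fixed exponential.

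Finally, for the $z$-coordinate I would use the identity $z_{n+1}=\gamma y_n z_n$ together with the previous step: since $y_n\to 0,$ there exists $N$ with $\gamma y_n\le \tfrac12$ for all $n\ge N,$ whence $z_{N+k}\le 2^{-k}z_N\to 0.$ Combining the three limits yields $T^n(x_0,y_0,z_0)\to (0,0,0)=P_1^*,$ as required.

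The main obstacle I anticipate is the step asserting $x_n\ge z_n$ whenever $y_n>0.$ It is tempting to invoke the ambient domain $\RD$ of Proposition~\ref{prop:domain-walledsimplex}, but the correct and cleaner justification is through the very definition of $\SD:$ if we had $y_n>0$ and $x_n<z_n,$ then $y_{n+1}<0$ would place $(x_n,y_n,z_n)$ in $\Theta,$ contradicting $T$-invariance of $\SD.$ Everything else reduces to chaining the geometric estimate on $x_n,$ the super-geometric estimate on $y_n,$ and the eventually geometric estimate on $z_n$ in the right order.
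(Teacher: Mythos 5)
Your proof is correct and follows essentially the same route as the paper's: establish $0\le x_{n+1}\le\mu x_n$ to get geometric decay of the prey, then chain this into $y_{n+1}\le\beta x_n y_n$ and finally into the decay of $z_n$. The only (harmless) differences are that the paper bounds $y_n\le\beta x_{n-1}$ by using $y_{n-1}\le 1$ instead of your super-geometric product, and controls $z_n$ via $z_n\le x_n$ rather than via $y_n\to 0$; your justification of $x_n\ge z_n$ through the one-step escaping set $\Theta$ is precisely the content of Proposition~\ref{prop:domain-walledsimplex}, which the paper simply invokes.
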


In what follows, $\lambda_{\mu}(\sigma) := \mu \sigma(1-\sigma)$
will denote the logistic map.

\begin{proof}[Proof of Theorem~\ref{GAS}]
From Figure~\ref{SuPeRfIgUrE} (or Lemmata~\ref{lem:P1}--\ref{lem:P4})
it follows that $(0,0,0)$ is the only fixed point of $T$ whenever
$\mu < 1$ and it is locally asymptotically stable.

We denote $(x_0,y_0,z_0) = (x,y,z) \in \SD$ and
$(x_n,y_n,z_n) = T^n(x,y,z) \in \RD$ for every $n \ge 1.$
Assume that there exists $n \ge 0$ such that $y_n = 0.$
Then, substituting $(x_n,0,z_n)$ into Equations~\eqref{eq:sistema} 
it follows that $y_{n+1} = z_{n+1} = 0$ and so
\[ T^{n+1}(x,y,z) = (x_{n+1}, 0, 0) \in [0,1] \times \{0\} \times \{0\}, \]
and
$T^{n+1+k}(x,y,z) = (\lambda_{\mu}^k(x_{n+1}), 0, 0)$
for every $k \ge 0.$
Since, $\mu < 1,$ one gets
$\lim_{k\to\infty} \lambda_{\mu}^k(\sigma) = 0$
for every $\sigma \in [0,1].$
So, the proposition holds in this case.

In the rest of the proof we assume that $y_n > 0$ for every $n \ge 0.$
We claim that
\[ x_n \le \frac{\mu^n}{4} \]
for every $n \ge 1.$
Let us prove the claim.
Since $(x,y,z) \in \SD \subset \RD$
(Proposition~\ref{prop:domain-walledsimplex}) with $y > 0$ we have
$x \ge z\ge 0$ and $x+y+z \le 1.$ 
Thus,
\[ x_1 = \mu x (1 - x - y - z) \le \mu x (1 - x) \le \frac{\mu}{4}, \]
which proves the case $n = 1.$
Assume now that the claim holds for some $n \ge 1$
and prove it for $n+1.$
As before, $(x_n,y_n,z_n) \in \RD$ with $y_n > 0$ implies
$x_n \ge z_n\ge 0$ and $x_n+y_n+z_n \le 1.$ 
Hence,
\[
    x_{n+1} = \mu x_n (1 - x_n - y_n - z_n) \le 
              \mu x_n \le 
              \mu \frac{\mu^n}{4} = 
              \frac{\mu^{n+1}}{4}.
\]
On the other hand, by using again the assumption that
$(x_n,y_n,z_n) \in \RD$ with $1 \ge y_n > 0$ for every $n \ge 0,$
and the definition of $T$ in~\eqref{eq:sistema}, 
we get that $x_n>z_n \ge 0$ for every $n\ge 0.$
Moreover,
$y_{n+1} = \beta y_n (x_n - z_n) \le \beta y_n x_n \le \beta x_n.$
Hence, for every $n \ge 0,$
\[
   0 \le z_n < x_n \le \frac{\mu^n}{4}
   \quad\text{and}\quad
   0 < y_n \le \beta x_{n-1} \le \beta \frac{\mu^{n-1}}{4}.
\]
This implies that
$\lim_{n\to\infty} (x_n,y_n,z_n) = (0,0,0)$
because $\mu < 1.$
\end{proof}

To study the global dynamics in Zone~B we need three simple lemmas.
The first one is on the logistic map; the second one relates the
first coordinate of the image of $T$ with the logistic map; the third
one is technical.

\begin{lemma}[On the logistic map]\label{lem:logisticmap}
Let $1 < \mu < 2$ and set
$I_0 := \bigl[\alpha_{\mu}, \widetilde{\alpha}_{\mu}\bigr]$ 
where
$0 < \alpha_{\mu} = 1 -\tfrac{1}{\mu} < \tfrac{1}{2}$
is the stable fixed point of $\lambda_{\mu}$ and
$\tfrac{1}{2} < \widetilde{\alpha}_{\mu} < 1$ 
is the unique point such that
$\lambda_{\mu}\bigl(\widetilde{\alpha}_{\mu}\bigr) = \alpha_{\mu}.$
Set also 
$I_{n+1} := \lambda_{\mu}\bigl(I_n\bigr) \subset I_n$ 
for every $n \ge 0.$
Then, for every $\varepsilon > 0$ there exists $N \ge 1$
such that
$
  I_{N} \subset \left[\alpha_{\mu}, \alpha_{\mu}+\varepsilon\right).
$
\end{lemma}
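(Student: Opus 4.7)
The plan is to reduce the lemma to a one-dimensional monotone convergence argument by showing that, after the very first iterate, each $I_n$ already lies in the region $[\alpha_\mu, \tfrac12)$ on which $\lambda_\mu$ is strictly increasing. Once this is established, the right endpoints of the intervals form a decreasing orbit of $\lambda_\mu$ trapped between $\alpha_\mu$ and $\tfrac12$, forcing convergence to $\alpha_\mu$.

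First I would identify $\widetilde{\alpha}_\mu$ explicitly. Using the symmetry $\lambda_\mu(\sigma) = \lambda_\mu(1-\sigma)$ together with $\lambda_\mu(\alpha_\mu) = \alpha_\mu$, I get $\widetilde{\alpha}_\mu = 1 - \alpha_\mu = 1/\mu$, which lies in $(\tfrac12, 1)$ as required. Next I would compute $I_1$ directly: since $\lambda_\mu$ attains its global maximum $\mu/4$ at $\sigma = \tfrac12$, which lies strictly inside $I_0 = [\alpha_\mu, 1/\mu]$, and equals $\alpha_\mu$ at both endpoints of $I_0$, one obtains $I_1 = [\alpha_\mu, \mu/4]$. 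The crucial observation here is that the hypothesis $\mu < 2$ gives $\mu/4 < \tfrac12$, so $I_1 \subset [\alpha_\mu, \tfrac12)$, i.e.\ inside the monotone branch of $\lambda_\mu$.

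From this point the rest is routine. By induction I would show $I_n = [\alpha_\mu, a_n]$ for all $n \ge 1$, where $a_{n+1} = \lambda_\mu(a_n)$ and $a_n \in (\alpha_\mu, \tfrac12)$. The identity
\[
\lambda_\mu(\sigma) - \sigma = -\mu\sigma(\sigma - \alpha_\mu),
\]
which is negative on $(\alpha_\mu, 1)$, combined with monotonicity of $\lambda_\mu$ on $[0, \tfrac12]$, gives $\alpha_\mu < \lambda_\mu(\sigma) < \sigma$ for every $\sigma \in (\alpha_\mu, \tfrac12)$. Thus $(a_n)_{n \ge 1}$ is strictly decreasing and bounded below by $\alpha_\mu$, so it converges to a fixed point of $\lambda_\mu$ lying in $[\alpha_\mu, \tfrac12)$, which can only be $\alpha_\mu$. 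Given $\varepsilon > 0$, choosing $N$ with $a_N < \alpha_\mu + \varepsilon$ yields $I_N \subset [\alpha_\mu, \alpha_\mu + \varepsilon)$.

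I do not anticipate any genuine obstacle: the argument is essentially the standard proof that, for $1 < \mu < 2$, the positive fixed point of the logistic map attracts every trajectory starting in $(0, 1)$, specialised to the invariant interval $I_0$ given in the lemma. The only step requiring genuine care is the verification that the first iterate already lands on the monotone branch of $\lambda_\mu$; that step is what makes the strict hypothesis $\mu < 2$ essential (at $\mu = 2$, $\mu/4 = \tfrac12$ and the monotone reduction breaks down).
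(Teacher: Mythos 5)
Your proposal is correct and follows essentially the same route as the paper: both reduce to the monotone branch by observing that $\lambda_\mu(I_0)=[\alpha_\mu,\lambda_\mu(\tfrac12)]=[\alpha_\mu,\mu/4]\subset[\alpha_\mu,\tfrac12]$, then track the decreasing right endpoints $a_n=\lambda_\mu^n(\tfrac12)$ down to $\alpha_\mu$. The only cosmetic difference is that you justify $\lambda_\mu^n(\tfrac12)\to\alpha_\mu$ via the identity $\lambda_\mu(\sigma)-\sigma=-\mu\sigma(\sigma-\alpha_\mu)$ and monotone bounded convergence, whereas the paper simply cites this limit as known.
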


\begin{proof}
The fact that
$0 < \alpha_{\mu} = 1 -\tfrac{1}{\mu} < \tfrac{1}{2}$
is a stable fixed point of $\lambda_{\mu}$ for $1 < \mu < 2$
is well known. Also, since
$\lambda_{\mu}\Bigr\rvert_{\left[\alpha_{\mu},\tfrac{1}{2}\right]}$
is increasing and $1 < \mu < 2,$ it follows that
\[
   \alpha_{\mu} = \lambda_{\mu}\left(\alpha_{\mu}\right) < 
                  \dots <
                  \lambda_{\mu}^{n+1}\left(\tfrac{1}{2}\right) <
                  \lambda_{\mu}^n\left(\tfrac{1}{2}\right) < \dots <
                  \lambda_{\mu}^2\left(\tfrac{1}{2}\right) <
                  \lambda_{\mu}\left(\tfrac{1}{2}\right) < \tfrac{1}{2}.
\]
Therefore,
\[
   I_{1} = \lambda_{\mu}\bigl(I_0\bigr) =
           \lambda_{\mu}\left(\left[\alpha_{\mu}, \tfrac{1}{2}\right]\right) =
           \left[\alpha_{\mu}, \lambda_{\mu}\left(\tfrac{1}{2}\right)\right] \subset
           \left[\alpha_{\mu}, \tfrac{1}{2}\right] \subset I_0
\]
and, for every $n \ge 1,$ one gets
\[
   I_{n+1} = \lambda_{\mu}\bigl(I_n\bigr) =
             \left[\alpha_{\mu}, \lambda_{\mu}^{n+1}\left(\tfrac{1}{2}\right)\right] \varsubsetneq
             \left[\alpha_{\mu}, \lambda_{\mu}^{n}\left(\tfrac{1}{2}\right)\right] =
             I_{n} \subset 
             \left[\alpha_{\mu}, \tfrac{1}{2}\right].
\]
Then, the lemma follows from the fact that
$\lim_{n\to\infty} \lambda_{\mu}^n\left(\tfrac{1}{2}\right) = \alpha_{\mu}.$
\end{proof}

\begin{lemma}\label{lem:Tlogistikish}
Let $(x_0,y_0,z_0) \in \SD$ and set
\[ (x_n,y_n,z_n) := T(x_{n-1},y_{n-1},z_{n-1}) \in \RD \]
for every $n \ge 1.$
Then, for every $n \ge 1,$
\[
  0 \le x_n = \lambda_{\mu}(x_{n-1}) - \mu x_{n-1} (y_{n-1} + z_{n-1}) \le 
              \lambda_{\mu}(x_{n-1})
\]
and, when $\mu \le 2,$ it follows that
$0 \le x_{n} \le \lambda^n_{\mu}(x_0) \le \tfrac{1}{2}.$
\end{lemma}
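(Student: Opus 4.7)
The plan is to derive the stated identity directly from the definition of $T$, then to use the structure of $\RD$ to control the cross terms, and finally to run a short induction that exploits the monotonicity of $\lambda_{\mu}$ on $\bigl[0,\tfrac{1}{2}\bigr]$ when $\mu\le 2$.

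First I would simply expand the first coordinate of $T(x_{n-1},y_{n-1},z_{n-1})$:
\[
   x_n = \mu x_{n-1}\bigl(1 - x_{n-1} - y_{n-1} - z_{n-1}\bigr) = \lambda_{\mu}(x_{n-1}) - \mu x_{n-1}(y_{n-1}+z_{n-1}),
\]
which is exactly the stated identity. Since $(x_{n-1},y_{n-1},z_{n-1})\in\RD\subset\U$, we have $x_{n-1},y_{n-1},z_{n-1}\ge 0$ together with $x_{n-1}+y_{n-1}+z_{n-1}\le 1$. The first fact makes the correction term $\mu x_{n-1}(y_{n-1}+z_{n-1})$ nonnegative, giving $x_n\le\lambda_{\mu}(x_{n-1})$; combined with the second it yields $1-x_{n-1}-y_{n-1}-z_{n-1}\ge 0$ and hence $x_n\ge 0$. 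This settles the first block of inequalities.

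For the second block, where $\mu\le 2$, I would argue by induction on $n$. The key observation is that for $\mu\le 2$ the logistic map sends $[0,1]$ into $\bigl[0,\tfrac{\mu}{4}\bigr]\subset\bigl[0,\tfrac{1}{2}\bigr]$ and is strictly increasing on $\bigl[0,\tfrac{1}{2}\bigr]$. The case $n=1$ is immediate from the first block: $x_1\le\lambda_{\mu}(x_0)=\lambda_{\mu}^1(x_0)$, and since $x_0\in[0,1]$ we have $\lambda_{\mu}^1(x_0)\le\tfrac{\mu}{4}\le\tfrac{1}{2}$. For the inductive step, assume $x_{n-1}\le\lambda_{\mu}^{n-1}(x_0)\le\tfrac{1}{2}$. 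The first block gives $x_n\le\lambda_{\mu}(x_{n-1})$, and because both $x_{n-1}$ and $\lambda_{\mu}^{n-1}(x_0)$ lie in $\bigl[0,\tfrac{1}{2}\bigr]$, monotonicity yields
\[
   \lambda_{\mu}(x_{n-1})\le \lambda_{\mu}\bigl(\lambda_{\mu}^{n-1}(x_0)\bigr)=\lambda_{\mu}^n(x_0).
\]
Finally, $\lambda_{\mu}^n(x_0)=\lambda_{\mu}\bigl(\lambda_{\mu}^{n-1}(x_0)\bigr)\in\bigl[0,\tfrac{\mu}{4}\bigr]\subset\bigl[0,\tfrac{1}{2}\bigr]$, closing the induction.

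Overall the lemma is essentially an unpacking of the definition together with a one-variable induction, so I do not expect any real obstacle; the only mild subtlety is remembering to confine iterates to $\bigl[0,\tfrac{1}{2}\bigr]$ before invoking the monotonicity of $\lambda_{\mu}$, which is precisely why the hypothesis $\mu\le 2$ enters the second assertion.
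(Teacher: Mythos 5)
Your argument is correct and follows essentially the same route as the paper: the identity is obtained by expanding the first coordinate of $T$, the sign of the correction term is controlled by the nonnegativity of the coordinates in $\RD$, and the bound $x_n\le\lambda_{\mu}^n(x_0)\le\tfrac{1}{2}$ is established by the same induction using that $\lambda_{\mu}$ maps $[0,1]$ into $\bigl[0,\tfrac{1}{2}\bigr]$ and is increasing there when $\mu\le 2$. No gaps.
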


\begin{proof}
The first statement is a simple computation:
\begin{align*}
x_n &= \mu x_{n-1} (1 - x_{n-1}) - \mu x_{n-1} (y_{n-1} + z_{n-1}) \\
    &= \lambda_{\mu}(x_{n-1}) - \mu x_{n-1} (y_{n-1} + z_{n-1})
     \le \lambda_{\mu}(x_{n-1})
\end{align*}
(notice that $\mu, x_n, x_{n-1}, y_{n-1}, z_{n-1} \ge 0$ because
$(x_{n},y_{n},z_{n}) \in \RD$ for every $n$).

The second statement for $n = 1$ follows directly from the first
statement and from the fact that
$
  \lambda_{\mu}([0,1]) =
         \lambda_{\mu}\left(\left[0,\tfrac{1}{2}\right]\right) \subset
         \left[0,\tfrac{1}{2}\right]
$
whenever $\mu \le 2.$

Assume now that the second statement holds for some $n \ge 1.$
Then, from the first statement of the lemma and the fact that
$\mu \le 2$ we have
\[
  0 \le x_{n+1} \le \lambda_{\mu}(x_{n}) \le
                    \lambda_{\mu}\left(\lambda^n_{\mu}(x_0)\right) =
                    \lambda^{n+1}_{\mu}(x_0) \le \lambda_{\mu}\left(\tfrac{1}{2}\right) \le 
                    \tfrac{1}{2},
\]
because
$\lambda_{\mu}\bigr\rvert_{\left[0,\tfrac{1}{2}\right]}$
is increasing.
\end{proof}

The proof of the next technical lemma is a simple exercise.

\begin{lemma}[The damped logistic map]\label{lem:DampedLogisticmap}
Let 
$\lambda_{\mu,s}(\sigma) := s\lambda_{\mu}(\sigma) = \mu s\sigma(1-\sigma)$ 
denote the \emph{damped logistic map} defined on the interval $[0,1].$
Assume that $1 < \mu < 2$ and $\tfrac{1}{\mu} < s < 1.$
Then the following properties of the damped logistic map hold:
\begin{enumerate}[(a)]
\item $\lambda_{\mu,s}(\sigma) < \lambda_{\mu}(\sigma)$ 
      for every $0 < \sigma < 1.$
\item $\lambda_{\mu,s}(0) = 0$ and
      $\lambda_{\mu,s}\Bigr\rvert_{\left[0,\tfrac{1}{2}\right]}$ 
      is strictly increasing.
\item $\lambda_{\mu,s}$ has exactly one stable fixed point
      $\alpha_{\mu,s} := 1-\tfrac{1}{\mu s}$
      with derivative
      \[ 
         \lambda'_{\mu,s}(\alpha_{\mu,s}) = 
               \lambda'_{\mu,s}(\sigma)\bigr\rvert_{\sigma=\alpha_{\mu,s}} = 
               \mu s(1-2\sigma)\bigr\rvert_{\sigma=\alpha_{\mu,s}} = 
               2 - \mu s < 1\, . 
      \]
\item For every $\sigma \in \bigl(0, \alpha_{\mu,s}\bigr)$ we have
\[ \sigma < \lambda_{\mu,s}(\sigma) < \lambda^2_{\mu,s}(\sigma) < \dots < \alpha_{\mu,s} \]
and $\lim_{k\to\infty} \lambda^k_{\mu,s}(\sigma) = \alpha_{\mu,s}.$
\end{enumerate}
\end{lemma}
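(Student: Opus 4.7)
The plan is to verify the four statements directly from the formula $\lambda_{\mu,s}(\sigma)=\mu s\sigma(1-\sigma)$, using throughout the key numerical fact that the hypotheses $1/\mu<s<1$ and $1<\mu<2$ together imply $1<\mu s<2$. This single inequality drives everything that follows, so I would state it at the top of the proof and refer to it repeatedly.

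For (a) I would simply note that $\lambda_{\mu}(\sigma)>0$ on $(0,1)$ and $s<1$, so $\lambda_{\mu,s}(\sigma)=s\lambda_{\mu}(\sigma)<\lambda_{\mu}(\sigma)$. For (b), $\lambda_{\mu,s}(0)=0$ is immediate, and $\lambda'_{\mu,s}(\sigma)=\mu s(1-2\sigma)>0$ on $\bigl[0,\tfrac12\bigr)$, giving strict monotonicity. For (c), I would solve $\sigma=\mu s\sigma(1-\sigma)$ to get the fixed points $0$ and $\alpha_{\mu,s}=1-\tfrac{1}{\mu s}$; the bound $1<\mu s<2$ yields $\alpha_{\mu,s}\in\bigl(0,\tfrac12\bigr)$. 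Plugging into the derivative formula gives $\lambda'_{\mu,s}(\alpha_{\mu,s})=\mu s(1-2\alpha_{\mu,s})=2-\mu s\in(0,1)$, confirming stability.

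For (d) the clean route is the standard monotone-iteration argument. I would factor
\[
   \lambda_{\mu,s}(\sigma)-\sigma=\sigma\bigl(\mu s(1-\sigma)-1\bigr),
\]
and observe that the bracket is positive exactly when $\sigma<\alpha_{\mu,s}$. This gives $\sigma<\lambda_{\mu,s}(\sigma)$ on $(0,\alpha_{\mu,s})$. Combined with the fact from (b) that $\lambda_{\mu,s}$ is strictly increasing on $\bigl[0,\tfrac12\bigr]$ and the inclusion $\alpha_{\mu,s}\in\bigl(0,\tfrac12\bigr)$ from (c), one gets $\lambda_{\mu,s}(\sigma)<\lambda_{\mu,s}(\alpha_{\mu,s})=\alpha_{\mu,s}$. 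Iterating yields the chain $\sigma<\lambda_{\mu,s}(\sigma)<\lambda_{\mu,s}^2(\sigma)<\cdots<\alpha_{\mu,s}$. The sequence is therefore increasing and bounded, so it converges to some $L\in(\sigma,\alpha_{\mu,s}]$, and continuity forces $L$ to be a fixed point of $\lambda_{\mu,s}$ lying in $\bigl(0,\tfrac12\bigr]$; the only such fixed point is $\alpha_{\mu,s}$.

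There is no serious obstacle; the whole proof is bookkeeping. The one step that is easy to overlook is checking $\alpha_{\mu,s}<\tfrac12$, because the monotone-iteration argument relies on the iterates staying inside the interval where $\lambda_{\mu,s}$ is increasing. This is the only place the upper bound $\mu s<2$ is used, and it is worth making the dependence explicit in the write-up.
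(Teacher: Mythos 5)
Your proof is correct and complete. The paper itself offers no argument here---it dismisses the lemma with ``the proof of the next technical lemma is a simple exercise''---so there is nothing to compare against; your write-up simply supplies the standard monotone-iteration argument that the authors had in mind. You are also right to flag $\alpha_{\mu,s}<\tfrac{1}{2}$ (equivalently $\mu s<2$) as the one point worth making explicit, since it is exactly what keeps the orbit inside the interval where $\lambda_{\mu,s}$ is increasing and guarantees that the limit of the increasing bounded sequence can only be the fixed point $\alpha_{\mu,s}$.
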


\begin{theorem}[Global dynamics in Zone B]\label{GAS2}
Assume that $1 < \mu < \tfrac{\beta}{\beta-1}$ and let 
$(x,y,z)$ be a point from $\SD.$
Then, either $T^n(x,y,z) = (0,0,0)$ for some $n \ge 0$ or
\[ \lim_{n\to\infty} T^n(x,y,z) = \left(1-\mu^{-1},0,0\right) = P_2^*. \]
\end{theorem}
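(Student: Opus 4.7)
The plan is to split on whether the $y$-coordinate of the orbit ever vanishes. If $y_{n_0} = 0$ for some $n_0 \ge 0$, the recursions $y_{n+1} = \beta y_n(x_n - z_n)$ and $z_{n+1} = \gamma y_n z_n$ give $y_n = z_n = 0$ for every $n > n_0$, so from step $n_0+1$ onward the orbit lies on the invariant $x$-axis and evolves via the logistic map $x_{n+1} = \lambda_\mu(x_n)$. Because $\beta \ge 2.5$ forces $1 < \mu < \tfrac{\beta}{\beta-1} \le \tfrac{5}{3} < 2$, a short extension of Lemma~\ref{lem:logisticmap} to all of $(0,1)$ (using that $\lambda_\mu$ is increasing on $[0,\tfrac{1}{2}]$ with $\lambda_\mu(x)>x$ on $(0,\alpha_\mu)$, and maps $(\widetilde{\alpha}_\mu,1)$ into $(0,\alpha_\mu)$) shows that either $x_{n_0+1} = 0$, giving the origin alternative of the theorem, or $x_{n_0+1} \in (0,1)$ and $\lambda_\mu^k(x_{n_0+1}) \to \alpha_\mu := 1 - \mu^{-1}$, hence $T^n(x,y,z) \to P_2^*$.

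From here on I would assume $y_n > 0$ for every $n$. Then $(x_n, y_n, z_n) \in \RD$ with $y_n > 0$ gives $x_n \ge z_n$, and $y_{n+1} > 0$ sharpens this to $x_n > z_n \ge 0$, so in particular $x_n > 0$ for every $n$. Lemma~\ref{lem:Tlogistikish} then yields $0 < x_n \le \lambda_\mu^n(x_0) \le \tfrac{1}{2}$, and the logistic-attraction argument above produces $\lambda_\mu^n(x_0) \to \alpha_\mu$, so $\limsup x_n \le \alpha_\mu$.

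The hypothesis $\mu < \tfrac{\beta}{\beta-1}$ is equivalent to $\beta\alpha_\mu < 1$. Choosing $\varepsilon > 0$ with $c := \beta(\alpha_\mu + \varepsilon) < 1$ and $N$ so that $x_n < \alpha_\mu + \varepsilon$ for $n \ge N$, one gets $y_{n+1} \le \beta x_n y_n \le c\, y_n$, i.e.\ geometric decay $y_n \to 0$. Combined with $z_n \le x_n \le \tfrac{1}{2}$, the recursion $z_{n+1} = \gamma y_n z_n \le \tfrac{\gamma}{2} y_n$ delivers $z_n \to 0$.

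The hard part will be the matching lower bound $\liminf x_n \ge \alpha_\mu$, and it is here that Lemma~\ref{lem:DampedLogisticmap} enters. Setting $\delta_n := y_n + z_n \to 0$ and using $1-x_n \ge \tfrac{1}{2}$, one rewrites
\[
   x_{n+1} = \mu x_n(1-x_n) - \mu x_n\,\delta_n \;\ge\; \mu x_n(1-x_n)\bigl(1 - 2\delta_n\bigr).
\]
For any fixed $s \in \bigl(\tfrac{1}{\mu},1\bigr)$, once $\delta_n \le \tfrac{1-s}{2}$ the right-hand side is at least $\lambda_{\mu,s}(x_n)$. Strict monotonicity of $\lambda_{\mu,s}$ on $[0,\tfrac{1}{2}]$ (Lemma~\ref{lem:DampedLogisticmap}(b)) allows iteration: $x_{N+k} \ge \lambda_{\mu,s}^k(x_N)$, and Lemma~\ref{lem:DampedLogisticmap}(d) then forces $\liminf x_n \ge \alpha_{\mu,s} = 1 - \tfrac{1}{\mu s}$. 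Letting $s \to 1^-$ gives $\liminf x_n \ge \alpha_\mu$; combined with the upper bound this yields $\lim x_n = \alpha_\mu$, and therefore $T^n(x,y,z) \to (\alpha_\mu, 0, 0) = P_2^*$.
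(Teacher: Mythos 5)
Your proposal is correct, and while it leans on the same three auxiliary lemmas as the paper (the nested-interval Lemma~\ref{lem:logisticmap}, the comparison Lemma~\ref{lem:Tlogistikish}, and the damped logistic Lemma~\ref{lem:DampedLogisticmap}), the core convergence argument for $x_n$ is genuinely different. The paper proves an ``enter and stay'' statement: it first shows, via the Mean Value Theorem and the local contraction $\lambda_{\mu}'(\xi)<r$ near $\alpha_{\mu}$ together with the smallness of $y_n+z_n$, that once $\abs{x_M-\alpha_{\mu}}<\varepsilon$ the orbit remains $\varepsilon$-close forever, and then uses the damped logistic map (with $s$ chosen depending on $\varepsilon$ so that $\alpha_{\mu,s}>\alpha_{\mu}-\varepsilon$) only to force the orbit to enter that neighbourhood at least once. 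You instead squeeze: the upper comparison $x_n\le\lambda_{\mu}^n(x_0)$ gives $\limsup x_n\le\alpha_{\mu}$ directly, and the clean pointwise bound $x_{n+1}\ge\mu x_n(1-x_n)(1-2\delta_n)\ge\lambda_{\mu,s}(x_n)$ (valid once $x_n\le\tfrac12$ and $\delta_n\le\tfrac{1-s}{2}$) gives $\liminf x_n\ge\alpha_{\mu,s}$ for every fixed $s\in(\tfrac1\mu,1)$, after which $s\to1^-$ finishes the job with no contraction estimate at all. This buys you a shorter and more transparent proof that avoids the $\varepsilon$--$\delta$ bookkeeping and the MVT step entirely; the paper's version, in exchange, produces an explicit ``trapping'' statement with quantitative constants. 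Two small points you should make explicit when writing it up: in the lower-bound step, Lemma~\ref{lem:DampedLogisticmap}(d) only covers $x_N\in\bigl(0,\alpha_{\mu,s}\bigr)$, so you need the one-line observation that $\bigl[\alpha_{\mu,s},\tfrac12\bigr]$ is forward invariant under $\lambda_{\mu,s}$ to handle $x_N\ge\alpha_{\mu,s}$; and in the $y_{n_0}=0$ case you should note $x_{n_0+1}\le\tfrac{\mu}{4}<1$, so the only degenerate value to exclude is $x_{n_0+1}=0$, which is exactly the origin alternative. Neither is a gap, just a sentence each.
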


\begin{remark}
From Lemma~\ref{lem:existence:fixpoints} it follows that
the unique fixed points which exist in this case are
$P_1^*$ and $P_2^*.$
\end{remark}

\begin{proof}
From Figure~\ref{SuPeRfIgUrE} 
(or Lemmata~\ref{lem:P1}--\ref{lem:DampedLogisticmap})
it follows that 
$\left(\alpha_{\mu},0,0\right)$ with $\alpha_{\mu} := 1-\tfrac{1}{\mu}$ 
is the only locally asymptotically stable fixed point of $T.$
In the whole proof we will consider that
$\alpha_{\mu}$ is the unique stable fixed point of $\lambda_{\mu}.$
As in previous proofs, we denote $(x_0,y_0,z_0) = (x,y,z) \in \SD$ and
$(x_n,y_n,z_n) = T^n(x,y,z) \in \RD$ for every $n \ge 1.$

If there exists $n \ge 0$ such that $(x_n,y_n,z_n) = (0,0,0)$
we are done. Thus, in the rest of the proof we assume that
$(x_n,y_n,z_n) \ne (0,0,0)$ for every $n \ge 0.$

Assume that there exists $n \ge 0$ such that $y_n = 0.$
By the definition of $T$, it follows that
$
   T^{n+1}(x,y,z) = (x_{n+1}, 0, 0) \in [0,1] \times \{0\} \times \{0\},
$
and, consequently,
$
  T^{n+1+k}(x,y,z) = (\lambda_{\mu}^k(x_{n+1}), 0, 0)
$
for every $k \ge 0.$
Thus, since 
\[ 1 < \mu < \tfrac{\beta}{\beta-1} \le \tfrac{5}{3} < 2, \]
it turns out that
$\lim_{k\to\infty} \lambda_{\mu}^k(x_{n+1}) = \alpha_{\mu}$
(recall that we are in the case
$(x_n,y_n,z_n) \ne (0,0,0)$ for every $n \ge 0$
and, consequently,
$\lambda_{\mu}^k(x_{n+1}) \ne 0$ for every $k \ge 0$).
So, the proposition holds in this case.

In the rest of the proof we are left with the case 
$(x_n,y_n,z_n) \in \RD$ and $y_n > 0$ for every $n \ge 0.$
Moreover, suppose that $x_n = 0$ for some $n \ge 0.$
Since $(x_n,y_n,z_n) \in \RD$ we have that 
$0 \le z_n \le x_n = 0$ implies $z_n = 0.$
Consequently,
$(x_{n+1},y_{n+1},z_{n+1}) = T(x_n,y_n,z_n) = (0,0,0),$
a contradiction.
Thus, $x_n, y_n > 0$ for every $n \ge 0.$

Observe that, since $\mu < \tfrac{\beta}{\beta-1}$ we have
\[
  \alpha_{\mu} = \frac{\mu-1}{\mu} <
                 \frac{\mu-1}{\mu}\Biggr\rvert_{\mu=\tfrac{\beta}{\beta-1}} =
                 \tfrac{1}{\beta}.
\]
On the other hand,
$
  \lambda'_{\mu}\bigl(\alpha_{\mu}\bigr) = 
       \mu(1-2x)\Bigr\rvert_{x=\tfrac{\mu-1}{\mu}} = 2-\mu < 1
$
because $\mu > 1.$
Thus, there exist $r \in (2-\mu, 1)$ and $0 < \delta < \alpha_{\mu}$
such that
$\alpha_{\mu} + \delta < \tfrac{1}{\beta} \le \tfrac{2}{5} < \tfrac{1}{2},$ 
and $\lambda'_{\mu}(x) < r$ for every 
$x \in \bigl(\alpha_{\mu}-\delta, \alpha_{\mu}+\delta\bigr).$

Set $\tau := \beta \bigl(\alpha_{\mu}+\delta\bigr) < 1.$
To show that
$\lim_{n\to\infty} \bigl(x_n, y_n, z_n\bigr) = \left(\alpha_{\mu},0,0\right)$
we will prove that the following two statements hold:
\begin{enumerate}[(i)]
\item There exists a positive integer $N$ such that
\[ 0 \le y_n < \tau^{n-N} \quad\text{and}\quad 0 \le z_n < \gamma\tau^{n-1-N}, \]
for every $n \ge N+2.$
\item For every $0 < \varepsilon < \delta$ there exists a positive
integer $M$ such that $\abs{x_n - \alpha_{\mu}} < \varepsilon$
for all $n \ge M.$
\end{enumerate}

To prove (i) and (ii) we fix $0 < \varepsilon < \delta < \alpha_{\mu}$
and we claim that there exists a positive integer $N = N(\varepsilon)$
such that $x_n < \alpha_{\mu} + \varepsilon$ for every $n \ge N.$
Now we prove the claim.
Assume first that
$x_0 \in \bigl[0,\alpha_{\mu}] \cup \bigl[\widetilde{\alpha}_{\mu},1\bigr],$
where 
$\tfrac{1}{2} < \widetilde{\alpha}_{\mu} < 1$
is the unique point such that
$\lambda_{\mu}\bigl(\widetilde{\alpha}_{\mu}\bigr) = \alpha_{\mu}.$
Since
\[
  \lambda_{\mu}\left(\bigl[0,\alpha_{\mu}] \cup 
        \bigl[\widetilde{\alpha}_{\mu},1\bigr]\right) =
            \lambda_{\mu}\left(\bigl[0,\alpha_{\mu}]\right) =
            \bigl[0,\alpha_{\mu}],
\]
$\lambda^n_{\mu}(x_0) \in \bigl[0, \alpha_{\mu}\bigr]$
for every $n \ge 1.$ 
Thus, if we set $N = N(\varepsilon) = 1$ and we take $n \ge N$, 
by Lemma~\ref{lem:Tlogistikish} we have
\[
   0 \le x_n \le \lambda^n_{\mu}(x_0) \le 
                 \alpha_{\mu} < 
                 \alpha_{\mu} + \varepsilon.
\]
Assume now that
$x_0 \in \bigl(\alpha_{\mu}, \widetilde{\alpha}_{\mu}\bigr).$
By Lemmas~\ref{lem:Tlogistikish} and~\ref{lem:logisticmap},
there exists $N = N(\varepsilon) \ge 1$ such that
\[
   0 \le x_n \le \lambda^n_{\mu}(x_0) \in 
                 I_n \subset
                 I_{N} \subset 
                 \left[\alpha_{\mu}, \alpha_{\mu}+\varepsilon\right)
\]
for every $n \ge N.$
This ends the proof of the claim.

Now we prove (i).
From the above claim we have
\begin{equation}\label{eq:xbound}
   \beta x_n < \beta \bigl(\alpha_{\mu}+\varepsilon\bigr) < 
               \beta \bigl(\alpha_{\mu}+\delta\bigr) = \tau < 1
   \quad\text{for every $n \ge N.$}
\end{equation}
Consequently, 
by the iterative use of \eqref{eq:xbound},
for every $n \ge N+2$ we have
\begin{multline*}
  y_n = \beta y_{n-1}(x_{n-1}-z_{n-1}) \le 
        \beta y_{n-1} x_{n-1} <
        \tau y_{n-1} < \\
        \tau^2 y_{n-2} < \cdots <
        \tau^{n-N} y_{_N} \le 
        \tau^{n-N},
\end{multline*}
and
$
  z_n = \gamma y_{n-1} z_{n-1} \le \gamma y_{n-1} < \gamma \tau^{n-1-N}.
$

Now we prove (ii). In this proof we will use the damped logistic map
$\lambda_{\mu, s}$ with parameter
$1 > s > \tfrac{1}{\mu\varepsilon+1}.$
From (i) it follows that there exists a positive integer
$\widetilde{M} \ge N+2$
such that
\[
   y_n + z_n < \min\left\{ 
        \frac{\beta(1-r)}{\mu\tau} \varepsilon, 
        (1-s)\bigl(1- (\alpha_{\mu}-\varepsilon)\bigr) 
   \right\}
\]
for every $n \ge \widetilde{M}.$
Observe that if there exists $M \ge \widetilde{M}$ such that
$\abs{x_M - \alpha_{\mu}} < \varepsilon,$
then
$\abs{x_n - \alpha_{\mu}} < \varepsilon$ for every $n \ge M.$
To prove it assume that there exists $n \ge M$ such that
$\abs{x_k - \alpha_{\mu}} < \varepsilon$ for $k = M, M+1,\dots, n$
and prove it for $n+1.$
By Lemma~\ref{lem:Tlogistikish}, Equation~\eqref{eq:xbound} and the 
Mean Value Theorem,
\begin{align*}
  \abs{x_{n+1} - \alpha_{\mu}}
     = &\ \abs{\lambda_{\mu}\bigl(x_n\bigr) - \alpha_{\mu} - 
                                      \mu x_n \bigl(y_n + z_n\bigr)} \le\\
       &\ \abs{\lambda_{\mu}\bigl(x_n\bigr) - 
                \lambda_{\mu}\bigl(\alpha_{\mu}\bigr)} + 
                \mu x_n \bigl(y_n + z_n\bigr) = \\
       &\ \lambda'_{\mu}(\xi) \abs{x_n - \alpha_{\mu}} + 
                \mu x_n \bigl(y_n + z_n\bigr) <\\
       &\ \lambda'_{\mu}(\xi) \varepsilon + 
                \mu\frac{\tau}{\beta}\frac{\beta(1-r)}{\mu\tau} \varepsilon =
                \varepsilon\left(\lambda'_{\mu}(\xi) + (1-r)\right),
\end{align*}
where $\xi$ is a point between $x_n$ and $\alpha_{\mu}.$
Since
$
 \abs{\xi - \alpha_{\mu}} \le \abs{x_n - \alpha_{\mu}} < \varepsilon < \delta
$
it follows that $\lambda'_{\mu}(\xi) < r.$ So,
$
  \abs{x_{n+1} - \alpha_{\mu}} < 
      \varepsilon\left(\lambda'_{\mu}(\xi) + (1-r)\right) < 
      \varepsilon.
$

To end the proof of the proposition we have to show that
there exists $M \ge \widetilde{M}$ such that
$\abs{x_M - \alpha_{\mu}} < \varepsilon.$
By the above claim we know that
$x_{\widetilde{M}} < \alpha_{\mu} + \varepsilon.$
So, the statement holds trivially with $M = \widetilde{M}$ whenever
$x_{\widetilde{M}} > \alpha_{\mu} - \varepsilon.$

In the rest of the proof we may assume that
$0 < x_{\widetilde{M}} \le \alpha_{\mu} - \varepsilon.$
Observe that
$\varepsilon < \delta < \alpha_{\mu} = \tfrac{\mu-1}{\mu}$
implies $\mu\varepsilon < \mu-1,$ which is equivalent to
$\mu\varepsilon+1 < \mu$ and, consequently,
$
  \tfrac{1}{\mu} < \tfrac{1}{\mu\varepsilon+1} < s < 1.
$
So, $s$ verifies the assumptions of Lemma~\ref{lem:DampedLogisticmap}.
Moreover, since $\tfrac{1}{\mu\varepsilon+1} < s,$
we have
\[
  1 < s(\mu\varepsilon+1) \Longleftrightarrow 
      \mu s-1 > \mu s-\mu s \varepsilon - s \Longleftrightarrow 
      \mu(\mu s - 1) > 
      \mu s \bigl(\mu(1-\varepsilon)-1\bigr),
\]
which is equivalent to
$
  \alpha_{\mu,s} = \tfrac{\mu s-1}{\mu s} > 
                   \tfrac{\mu(1-\varepsilon)-1}{\mu} 
                 = \alpha_{\mu} - \varepsilon.
$
Summarizing, we have,
$
  0 < x_{\widetilde{M}} \le \alpha_{\mu} - \varepsilon < \alpha_{\mu,s}.
$

By Lemma~\ref{lem:DampedLogisticmap}(d),
there exists $L > 0$ such that
$
  \lambda^L_{\mu,s}\bigl(x_{\widetilde{M}}\bigr) > \alpha_{\mu} - \varepsilon.
$
If there exists 
$N < \widetilde{M} < M < \widetilde{M}+L$ such that
$x_{M} > \alpha_{\mu} - \varepsilon$ then,
$\abs{x_M - \alpha_{\mu}} < \varepsilon$ because, 
by the above claim, 
$x_{M} < \alpha_{\mu} + \varepsilon.$
Hence, we may assume that
$x_{\widetilde{M}+k} \le \alpha_{\mu} - \varepsilon$
for every $k = 0,1,\dots, L-1.$
Then,
\[
   \mu x_{\widetilde{M}} \bigl(y_{\widetilde{M}} + z_{\widetilde{M}}\bigr) <
      \mu x_{\widetilde{M}} (1-s) \bigl(1- (\alpha_{\mu}-\varepsilon)\bigr) \le
      (1-s) \mu x_{\widetilde{M}} \bigl(1-x_{\widetilde{M}}\bigr) =
     (1-s) \lambda_{\mu}\bigl(x_{\widetilde{M}}\bigr),
\]
which,
by Lemmas~\ref{lem:Tlogistikish} and~\ref{lem:DampedLogisticmap}(b),
is equivalent to
\[
   0 < \lambda_{\mu,s}\bigl(x_{\widetilde{M}}\bigr) =
       s \lambda_{\mu}\bigl(x_{\widetilde{M}}\bigr) <
       \lambda_{\mu}\bigl(x_{\widetilde{M}}\bigr) - 
             \mu x_{\widetilde{M}} \bigl(y_{\widetilde{M}} + 
             z_{\widetilde{M}}\bigr) =
       x_{\widetilde{M}+1}\,.
\]
Moreover, by iterating these computations and
using again Lemma~\ref{lem:DampedLogisticmap}(b) we have
\[
   0 < \lambda^2_{\mu,s}\bigl(x_{\widetilde{M}}\bigr) <
   \lambda_{\mu,s}\bigl(x_{\widetilde{M}+1}\bigr) < 
   x_{\widetilde{M}+2}
\]
(notice that $x_{\widetilde{M}+1} < \tfrac{1}{2}$ by 
Lemma~\ref{lem:Tlogistikish}).
Thus, by iterating again all these computations we get
$
  0 < \lambda^k_{\mu,s}\bigl(x_{\widetilde{M}}\bigr) < x_{\widetilde{M}+k}
$
for every $k = 0,1,\dots, L.$ 
This implies that
\[
  \alpha_{\mu} - \varepsilon <
       \lambda^L_{\mu,s}\bigl(x_{\widetilde{M}}\bigr) < 
       x_{\widetilde{M}+L},
\]
and the statement holds with $M = \widetilde{M}+L.$
\end{proof}

\begin{figure}[htp]
\includegraphics[width=\textwidth]{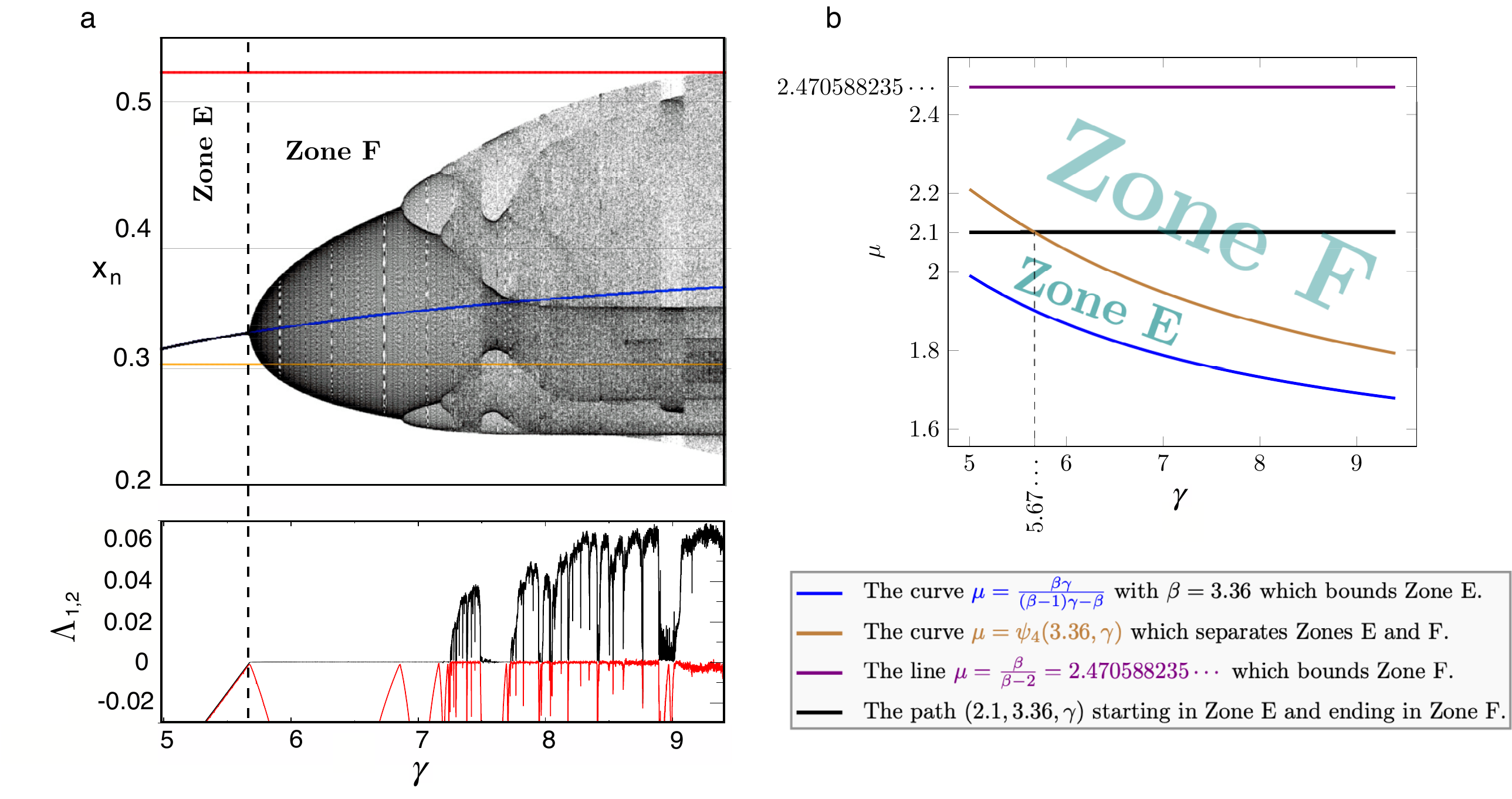}
\captionsetup{width=\linewidth}
\caption{
(a, upper) Bifurcation diagram displaying the dynamics of preys 
$x$ at increasing the predation intensity of predator $z$ on 
predator $y$, given by $\gamma$, using 
$\mu = 2.1$ and $\beta = 3.36.$ 
This range of $\gamma$ covers zones E and F, separated by the
vertical dashed line. 
The values of the fixed points are shown overlapped, with 
$P^*_2$: red; 
$P^*_3$: orange; and 
$P^*_4$: blue.
(a, lower) Spectrum of Lyapunov exponents, $\Lambda_{1,2,3}$ computed
for the same range of $\gamma$ used in the bifurcation diagram 
(for clarity only $\Lambda_{1,2}$ are displayed, in black and red respectively).
In both panels the initial conditions are: 
$x_0 = 0.1$, $y_0 = 0.02$, and $z_0 = 0.03.$ 
(b) A cut of the parameter space at $\beta = 3.36$ showing the path 
$(\mu = 2.1, \beta = 3.36, \gamma)$ followed by the bifurcation diagram 
of (a). 
The dynamics for this parameter range can be visualised in the file 
\textsf{Movie-4.mp4} in the Supplementary Material, where the 
three-dimensional bifurcation diagram displayed in the next figure is shown, 
together with the attractors projected in the two-dimensional phase spaces 
$(x, y)$, $(x, z)$, $(y, z)$, as well with the full attractor in the phase 
space $(x, y, z).$}\label{bif_gamma}
\end{figure}

\section{Chaos and Lyapunov exponents}\label{LEs}
As expected, iteration of the map~\eqref{eq:sistema} suggests the
presence of strange chaotic attractors
(see Figures~\ref{bif_gamma_3d}(c,d) and~\ref{bif_beta}(e,f)).
In order to identify chaos we compute Lyapunov exponents,
labelled $\Lambda_i$,
using the computational method described in
\cite[pages 74--80]{Parker1989},
which provides the full spectrum of Lyapunov exponents
for the map~\eqref{eq:sistema}.
\begin{figure}
\includegraphics[width=\textwidth]{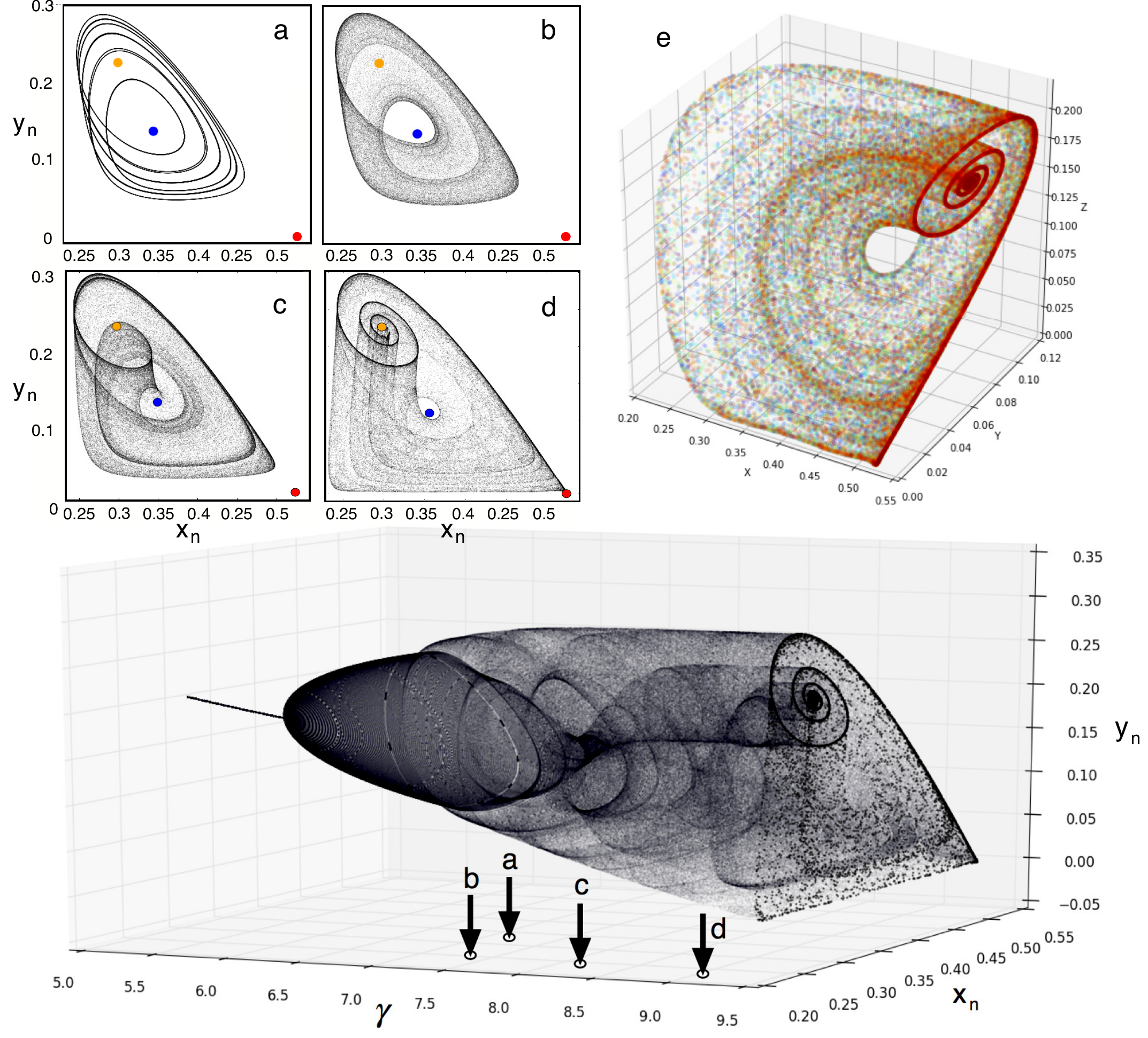}
\captionsetup{width=\linewidth}
\caption{Three-dimensional bifurcation diagram plotting the
population values $(x, y)$ using the predator rate of predator $z$ as
control parameter, setting $\mu = 2.1$ and $\beta = 3.36.$ 
The attractors above the bifurcation diagram are displayed using: 
(a) $\gamma = 7.3$; 
(b) $\gamma = 7.46$; 
(c) $\gamma = 8.14$, and 
(d) $\gamma = 9.14.$ 
All of the attractors are in zone $F.$ 
The fixed points are shown in the phase space, with 
$P_2^*$: red; 
$P_3^*$: orange; and 
$P_4^*$: blue. 
The initial conditions are the same than in Figure~\ref{bif_gamma}. 
In (e) we display the full chaotic attractor using $\gamma = 9.14.$ 
Here the color gradient corresponds to time: red dots are longer times. 
See also the file \textsf{Movie-4.mp4} in the Supplementary Material.}\label{bif_gamma_3d}
\end{figure}
\begin{figure}[htp]
\includegraphics[width=\textwidth]{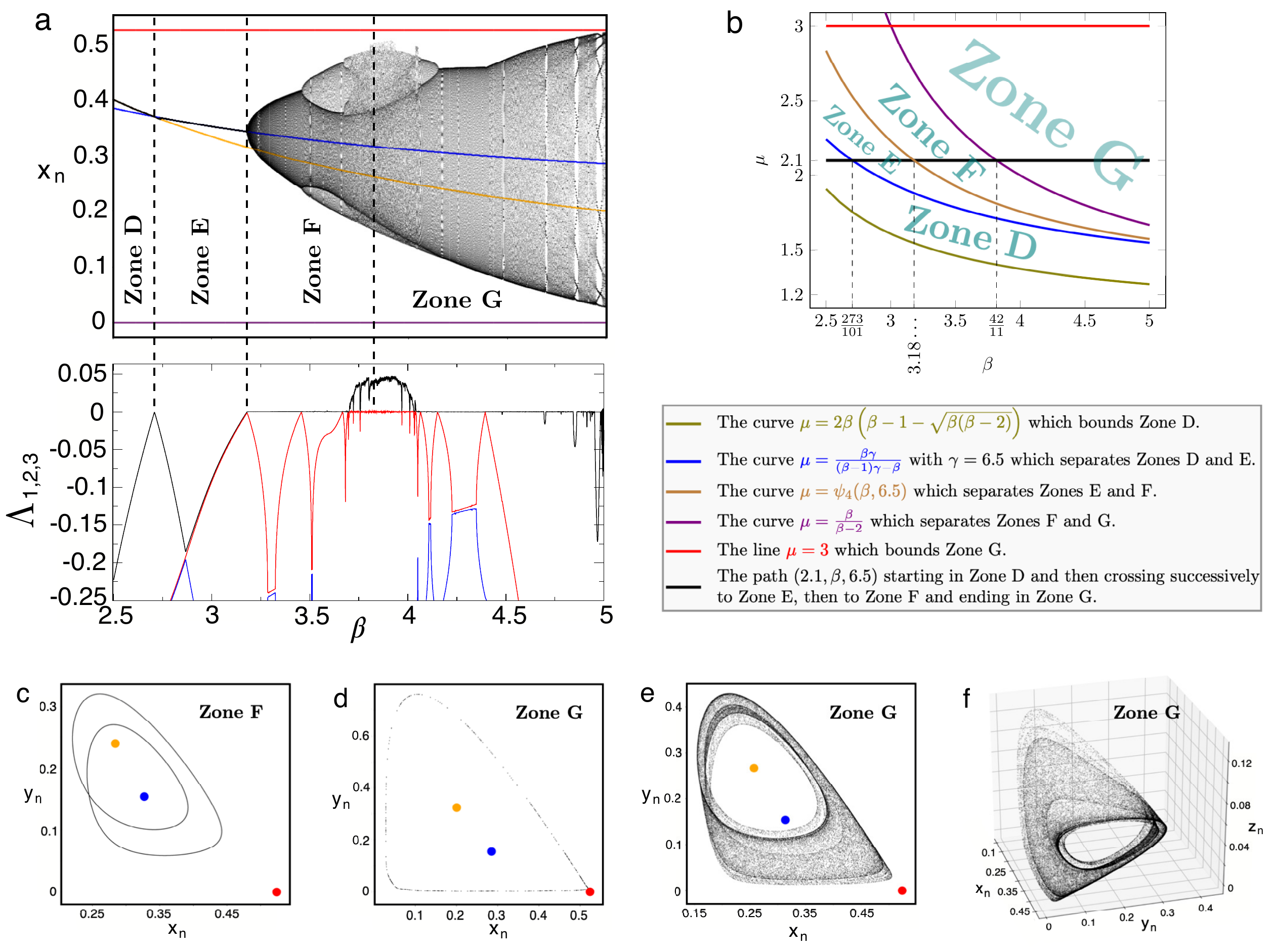}
\captionsetup{width=\linewidth}
\caption{
(a) Bifurcation diagram displaying preys' dynamics at increasing 
the rate of predation of predator $y$ (constant $\beta$) on the 
prey $x.$ 
The explored range of $\beta$ goes from zones D to G 
(changes between zones are indicated with vertical dashed lines).
Here the values of the fixed points when increasing $\beta$ are also
displayed ($P_2^*$: red; $P_3^*$: orange; and $P_4^*$: blue). 
Below we plot the spectrum of Lyapunov exponents, $\Lambda_i$, 
for the same range of $\beta.$ 
Here we fix $\mu = 2.1$ and $\gamma = 6.5.$ 
The initial conditions are the same than in the previous figure. 
(b) A cut of the parameter space at $\gamma = 6.5$ showing the path 
$(2.1, \beta, 6.5).$
Three attractors are shown with: 
(c) $\beta = 3.52$ (zone F);
(d) $\beta = 4.99$ and 
(e, f) $\beta = 3.89$ (zone G). 
The dynamics tied to the increase of $\beta$ can be visualised 
in the file \textsf{Movie-5.mp4} in the Supplementary Material.}\label{bif_beta}
\end{figure}

Let us explore the dynamics of the system focusing on the strength of
predation, parametrised by constants $\gamma$ and $\beta.$ To do so
we first investigate the dynamics at increasing the predation rate of
predator $z$ on predator $y$, given by $\gamma.$ We have built a
bifurcation diagram displaying the dynamics of the prey species $x$
by iterating Equations~\eqref{eq:sistema} at increasing $\gamma$, setting
$\mu = 2.1$ and $\beta = 3.36$ (see Figure~\ref{bif_gamma}(a)). The
increase in $\gamma$ for these fixed values of $\mu$ and $\beta$
makes the dynamics to change between zones $E \to F$
(see also Figure~\ref{bif_gamma}(b)).
For $5 < \gamma < 5.673555\cdots$, populations
achieve a static coexistence equilibrium at $P_4^*$, which is
achieved via damped oscillations (see the properties in zone E).
Increasing $\gamma$ involves the entry into zone F, where all of the
fixed points have an unstable nature and thus periodic and chaotic
solutions are found. Here we find numerical evidences of a route to
chaos driven by period-doubling of invariant closed curves that
appears after a supercritical Neimark-Sacker (Hopf-Andronov)
bifurcation for maps (flows)~\cite{Schuster1984,Kuznetsov1998},
for which the maximal Lyapunov exponent is zero
(see the range $5.673555 < \gamma \lesssim 7.25$),
together with complex eigenvalues for the fixed point $P_4^*$
(which is locally unstable).
Notice that the first Neimark-Sacker
bifurcation marks the change from zones E to F
(indicated with a vertical dashed line in Figure~\ref{bif_gamma}).
This means that an increase in the predation rate of species $y$
unstabilises the dynamics and the three species fluctuate chaotically.
Figure~\ref{bif_gamma_3d} displays the same bifurcation diagram than
in Figure~\ref{bif_gamma}, represented in a three-dimensional space
where it can be shown how the attractors change at increasing $\gamma$
projected onto the phase space $(x, y).$
Here we also display several projections of periodic
(Figure~\ref{bif_gamma_3d}a) and strange chaotic
(Figures~\ref{bif_gamma_3d}(b-d)) attractors.
Figure~\ref{bif_gamma_3d}e displays the full chaotic attractor.
For an animated visualisation of the dynamics dependence on $\gamma$ we
refer the reader to the file \textsf{Movie-4.mp4} in the
Supplementary Material.
\begin{figure}[htp]
\includegraphics[width=\textwidth]{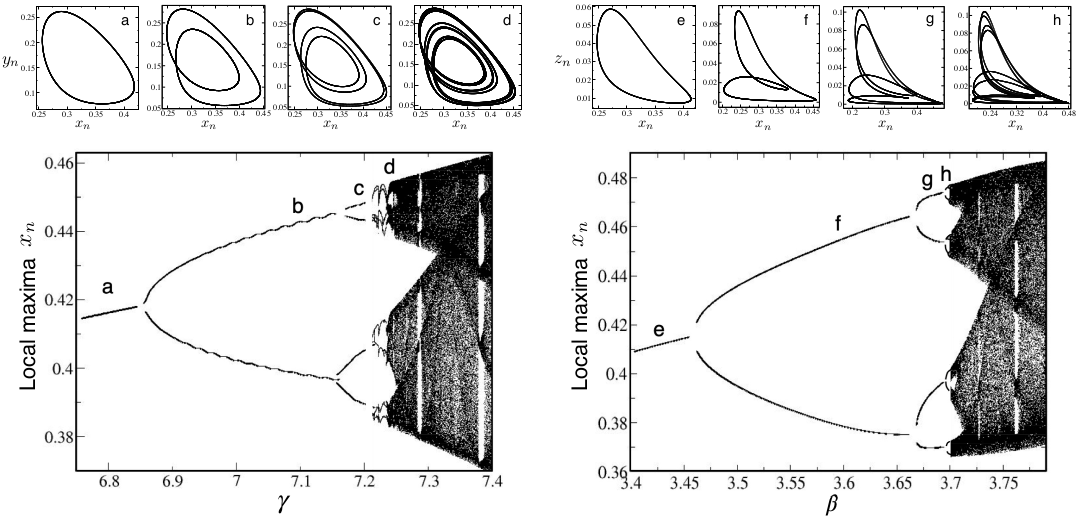}
\captionsetup{width=\linewidth}
\caption{Route to chaos at increasing predation rates governed by
period-doubling of invariant curves. We display the local maxima of
time series $x_n$ on the attractor for 
$\gamma$ (left diagram with $\beta = 3.36$) and 
$\beta$ (diagram at the right with $\gamma = 6.5$). 
Above the diagrams we display the attractors projected on the
phase space $(x, y)$ and $(x, z)$, with: 
(a) $\gamma = 6.8,$
(b) $\gamma = 7.1,$ 
(c) $\gamma = 7.18,$
(d) $\gamma = 7.21,$ 
(e) $\beta = 3.425$, 
(f) $\beta = 3.6,$, 
(g) $\beta = 3.685,$ and 
(h) $\beta = 3.7.$ 
In all plots the initial conditions are 
$x_0 = 0.2, y_0 = 0.02, z_0 = 0.03.$ 
See the file \textsf{Movie-6.mp4} (Supplementary Material) for a visualisation 
of the full attractor and the time series $x_n$, $y_n$, and $z_n$ undergoing 
period-doubling of closed curves tied to the bifurcations diagram at the left, 
shown within the range $6.75 \leq \gamma \leq 8.$}\label{routes}
\end{figure}

To further investigate the dynamics considering another key
ecological parameter, we study the dynamics at increasing the
predation strength of predator $y$ on preys $x$, which is given by
parameter $\beta.$ As an example we have selected the range 
$2.5 \leq \beta \leq 5$, which corresponds to one of the sides of 
$\Q.$ 
Here the range of $\beta$ follows the next order of crossing of the 
zones in $\Q$ when increasing $\beta$: $D \to E \to F \to G.$ 
Figure~\ref{bif_beta}(a) shows the bifurcation diagram also obtained 
by iteration. 
In Figure~\ref{fourier}(b) we also provide a diagram of the 
stability zones crossed in the bifurcation diagram.
Here, for $2.5 \leq \beta < 273/101$ the dynamics falls into zone D,
for which the top predator $z$ goes to extinction and the prey and
predator $y$ achieve a static equilibrium.
Increasing $\beta$ involves the entry into zone E
(at $\beta = 273/101$), the region where the fixed point of all-species
coexistence is asymptotically locally stable. 
Counter-intuitively, stronger predation of $y$ on $x$ makes the three 
species to coexist, avoiding the extinction of the top predator $z.$ 
At $\beta \approx 3.1804935$ there is another change to zone F, 
where all of the fixed points are unstable and thus periodic dynamics can occur. 
As we previously discussed, this is due to a series of bifurcations giving
place to chaos. We notice that further increase of $\beta$ involves
another change of zone. 
Specifically, at $\beta = 42/11 = 3.\overline{81}$ the dynamics changes 
from zone F to G.
Several attractors are displayed in Figure~\ref{bif_beta}:
(c) period-two invariant curve with $\beta = 3.52$ and zero maximal
Lyapunov exponent projected onto the phase space $(x, y)$, found in zone $F$; 
and two attractors of zone $G$, given by (d) a strange chaotic attractor with 
$\beta = 4.99$ and maximal Lyapunov exponent equals $0.0044\cdots$ also 
projected on $(x, y)$;
a strange chaotic attractor projected onto the phase space $(x, y)$ (e),
and in the full space space (f) with $\beta = 3.89$ and maximal Lyapunov
exponent equals $0.047\cdots.$
The file \textsf{Movie-5.mp4} displays the dynamics tied to the bifurcation
diagram displayed in Figure~\ref{bif_beta}.
\begin{figure}[htp]
\includegraphics[width=0.95\textwidth]{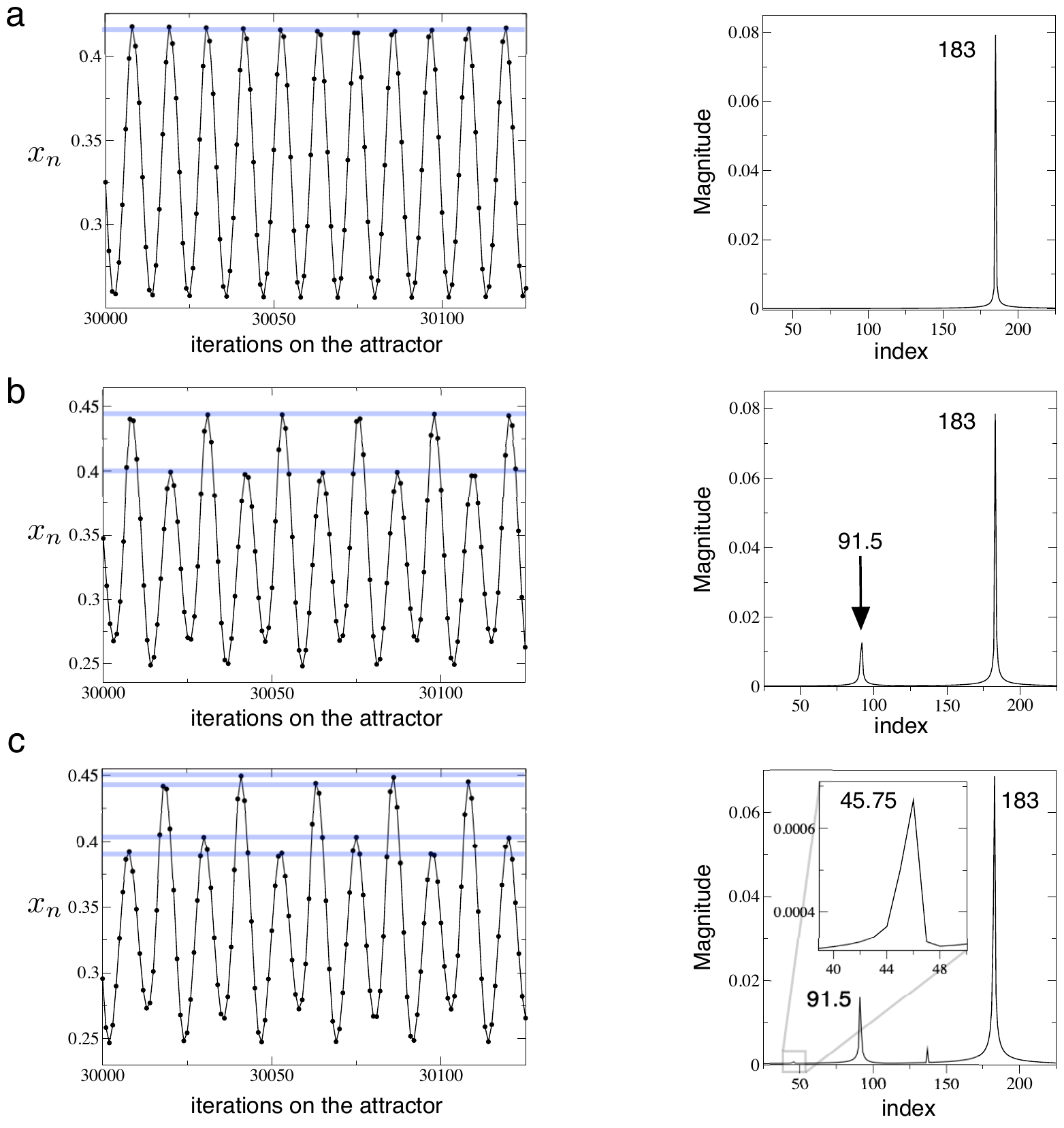}
\captionsetup{width=\linewidth}
\caption{Period-doublings of invariant curves represented with the
time series of the prey $x$ for the values of $\gamma$:
(a) $\gamma = 6.8,$
(b) $\gamma = 7.1,$ and
(c) $\gamma = 7.18$
(the same values of the left picture in Figure~\ref{routes}).
For better visualisation we have overlapped blue horizontal lines
indicating the maxima of the time series.
Note that in (c) the highest periods appear to be very close
(see also the attractor (c) in the previous figure).
The fast Fourier transforms for these three curves seem to show
numerical evidence of a period doubling phenomenon.
The FFT analysis of time series (c) contains an enlarged view of
the peak at index $45.75$, which is half the one found at $91.5$
and a quarter of $183$, all of them providing the most relevant
coefficients (their modulus, in fact) of their DFT and,
therefore, the main frequency of each discrete curve.}\label{fourier}
\end{figure}

\subsection{Route to chaos: Period-doubling of invariant curves}
It is known that some dynamical systems can enter into a chaotic
regime by means of different and well-defined routes
\cite{Schuster1984}. The most familiar ones are: 
  (i) the period-doubling route (also named Feigenbaum scenario); 
 (ii) the Ruelle-Takens-Newhouse route; 
(iii) and the intermittency route (also named Manneville-Pomeau route). 
The Feigenbaum scenario is the one identified in the logistic equation 
for maps, which involves a cascade of period doublings that ultimately 
ends up in chaos \cite{May1976}. 
The Ruelle-Takens-Newhouse involves the appearance of invariant curves 
that change to tori and then by means of tori bifurcations become unstable 
and strange chaotic attractors appear.
Finally, the intermittency route, tied to fold bifurcations, involves
a progressive appearance of chaotic transients which increase in
length as the control parameter is changed, finally resulting in a
strange chaotic attractor.

The bifurcation diagrams computed in
Figures~\ref{bif_gamma} and~\ref{bif_beta} 
seem to indicate that after a Neimark-Sacker bifurcation, 
the new invariant curves undergo period-doublings 
(see e.g., the beginning from Zone F until the presence of chaos 
in Figure~\ref{bif_beta}(a)). 
In order to characterise the routes to chaos at increasing the predation
parameters $\gamma$ and $\beta$, we have built bifurcation diagrams
by plotting the local maxima of time series for $x_n$ for each value
of these two parameters. 
The time series have been chosen after discarding a transient of 
$3\cdot10^4$ iterations to ensure that the dynamics lies in the attractor. 
The plot of the local maxima allows to identify
the number of maxima of the invariant curves as well as from the
strange attractors, resulting in one maximum for a period-1 invariant
curve, in two maxima for period-2 curves, etc. 
At the chaotic region the number of maxima appears to be extremely 
large (actually infinite). 
The resulting bifurcation diagram thus resembles the
celebrated period-doubling scenario of periodic points 
(Feigenbaum scenario).

The results are displayed in Figure~\ref{routes}. 
Since the system is discrete, the local maxima along the bifurcation 
diagrams have been smoothed using running averages. 
For both $\gamma$ and $\beta$, it seems clear that the invariant curves 
undergo period doubling. 
We also have plotted the resulting attractors for period-1,2,4,8 orbits
(see e.g. Figure~\ref{routes}(a-d) for the case with $\gamma$ using
projections in the $(x, y)$ phase space).

We have finally performed a Fast Fourier Transform (FFT) of the time
series for $x_n$ on the attractor corresponding to the attractors
displayed in~\ref{routes}(a-d)
and~\ref{routes}(e-h).
The FFT emphasizes the main frequencies (or periods) composing the
signal by showing the modulus of their Fourier coefficients. Remember
that FFT provides an efficient and fast way to compute the Discrete
Fourier Transform, DFT in short, of a discrete signal: 
given $x_0,x_1,\ldots,x_{N-1}$ complex numbers, 
its DFT is defined as the sequence $f_0,f_1,\ldots,f_{N-1}$ 
determined by
\[
  f_j = \sum_{k = 0}^{N-1} x_k
        \exp\left(\frac{-2\pi\text{i}jk}{N}\right).
\]

The FFTs have been computed using times series of $2^{11}$ points
after discarding the first $3 \cdot 10^4$ iterations of the map 
(a transitory). 
The results are displayed in Figure~\ref{fourier} for 
cases~\ref{routes}(a-d).
Similar results have been obtained for  cases~\ref{routes}(e-h, results not shown).
These FFT have been performed using a rectangular data window, 
and we have plotted the index of the signal versus its magnitude. 
It can be observed, by direct inspection, that the first relevant 
coefficient (in fact, its modulus) appear at each graph at half 
the index of the previous one (upper). 
This can be a numerical evidence of a period doubling 
(see also the animation in \textsf{Movie-6.mp4} in the Supplementary
Material to visualise the changes in the time series and in the
attractor at increasing $\gamma$). 
Here the period doubling of the curves can be clearly seen. 
A deeper study on the characterisation of this period-doubling 
scenario will be carried out in future work by computing the 
linking and rotation numbers of the curves.

\section{Conclusions}
The investigation of discrete-time ecological dynamical systems has
been of wide interest as a way to understand the complexity of
ecosystems, which are inherently highly nonlinear. Such
nonlinearities arise from density-dependent processes typically given
by intra- or inter- specific competition between species, by
cooperative interactions, or by antagonistic processes such as
prey-predator or host-parasite dynamics. Discrete models have been
widely used to model the population dynamics of species with
non-overlapping generations \cite{May1974,May1976,Schaffer1986}.
Indeed, several experimental research on insect dynamics revealed a
good matching between the observed dynamics and the ones predicted by
discrete
maps~\cite{Constantino1997,Desharnais2001,Dennis1997,Dennis2001}.

Typically, discrete models can display irregular or chaotic dynamics
even when one or two species are
considered~\cite{May1974,May1976,Elalim2012,Azmy2008,zhang2009}.
Additionally, the study of the local and global dynamics for
multi-species discrete models is typically performed numerically
(iterating) and most of the times fixing the rest of the parameters
to certain values. Hence, a full analysis within a given region of
the parameter space is often difficult due to the dimension of the
dynamical system and to the amount of parameters appearing in the
model. In this article we extend a previous two-dimensional map
describing predator-prey dynamics~\cite{Holden1986}. The extension
consists in including a top predator to a predator-prey model,
resulting in a three species food chain. This new model considers
that the top predator consumes the predators that in turn consume
preys. Also, the top predator interacts negatively with the growth of
the prey due to competition. Finally, the prey also undergoes
intra-specific competition.

We here provide a detailed analysis of local and global dynamics
of the model within a given volume of the full parameter space
containing relevant dynamics.
The so-called escaping set, causing sudden populations extinctions,
is identified.
These escaping sets contain zones which involve the surpass of the
carrying capacity and the subsequent extinction of the species.
For some parameter values these regions appear to have a complex,
fractal structure.

Several parametric zones are identified, for which
different dynamical outcomes exist: all-species extinctions,
extinction of the top predator, and persistence of the three species
in different coexistence attractors. Periodic and chaotic regimes are
identified by means of numerical bifurcation diagrams and of Lyapunov
exponents. We have identified a period-doubling route of invariant
curves chaos to chaos tuning the predation rates of both predators.
This route involves a supercritical Neimark-Sacker bifurcation giving
rise to a closed invariant curve responsible of all-species
coexistence. Despite this route to chaos has been found for given
combination of parameters and initial conditions tuning predation
rates, future work should address how robust is this route to chaos
to other parameter combinations. Interestingly, we find that this
route to chaos for the case of increasing predation directly on preys
(tuning $\beta$) can involve an unstable persistence of the whole
species via periodic or chaotic dynamics, avoiding the extinction of
top predators. This result is another example that unstable dynamics
(such as chaos) can facilitate species coexistence or survival, as
showed by other authors within the frameworks of homeochaotic
\cite{Kaneko1992,Ikegami1992} and metapopulation \cite{Allen1993}
dynamics.
\section*{Acknowledgements}
The research leading to these results has received funding from
``la Caixa'' Foundation and from a MINECO grant awarded to the
Barcelona Graduate School of Mathematics (BGSMath) under the ``Mar\'ia de Maeztu''
Program (grant MDM-2014-0445).
LlA has been supported by the Spain's "Agencial Estatal de Investigaci\'on" (AEI) grant MTM2017-86795-C3-1-P.
JTL has been partially supported by the MINECO/FEDER grant MTM2015-65715-P,
by the Catalan grant 2017SGR104,
and by the Russian Scientific Foundation grants 14-41-00044 and 14-12-00811.
JS has been also funded by a ``Ram\'on y Cajal'' Fellowship (RYC-2017-22243)
and by a MINECO grant MTM-2015-71509-C2-1-R and the AEI grant RTI2018-098322-B-100.
RS and BV have been partially funded by the Botin Foundation,
by Banco Santander through its Santander Universities Global Division
and by the PR01018-EC-H2020-FET-Open MADONNA project.
RS also acknowledges support from the Santa Fe Institute.
JTL thanks the Centre de Recerca Matem\`atica (CRM) for its hospitality
during the preparation of this work.

\end{document}